\newtheorem{theorem}{Theorem}[section]
\newtheorem{proposition}[theorem]{Proposition}
\newtheorem{lemma}[theorem]{Lemma}
\newtheorem{remark}[theorem]{Remark}
\newtheorem{corollary}[theorem]{Corollary}
\newtheorem{lemma-definition}[theorem]{Lemma-definition}
\theoremstyle{definition}
\newtheorem{definition}[theorem]{Definition}
\newtheorem{example}[theorem]{Example}
\newtheorem{notation}[theorem]{Notation}
\newtheorem{convention}[theorem]{Convention}
\newtheorem*{ack}{Acknowledgements}
\begin{document}

\title{Singularity categories 
and singular loci of certain abelian quotient singularities
\footnotetext{Email: xjchen@scu.edu.cn, zengjh662@163.com}}

\author[1,2]{Xiaojun Chen}
\author[3,4]{Jieheng Zeng}

\renewcommand\Affilfont{\small}

\affil[1]{Department of Mathematics, New Uzbekistan University,
Tashkent 100001, Uzbekistan}

\affil[2]{School of Mathematics, Sichuan University, Chengdu 610064, P.R. China}

\affil[3]{School of Mathematics and Statistics, Hunan Normal University, Changsha 410081, P.R. China }

\affil[4]{School of Mathematical Sciences, Peking University, Beijing 100871, P.R. China}

\date{}

\maketitle

\begin{abstract} 
Let $V$ be an affine space over field $k$, which is  characteristic zero.
Let $G\subseteq\mathrm{SL}(V)$ be a finite
abelian group, and denote by $S$ 
the $G$-invariant  
subring of the polynomial ring $k[V]$. 
It is shown that the singularity category $D_{sg}(S)$ recovers the reduced 
singular locus of $\mathrm{Spec}(S)$.

\noindent{\bf MSC2020:} 14A22, 14B05, 32S20
\end{abstract}

\setcounter{tocdepth}{3} \tableofcontents

\section{Introduction}

Let $A$ be an associative algebra over a base field $k$ of characteristic zero. 
Its singularity category $D_{sg}(A)$ is the Verdier quotient 
$D^{b}(A) / \mathrm{Perf}(A)$, where $\mathrm{Perf}(A)$ 
is the full subcategory consisting of perfect complexes over $A$. 
It was first introduced by Buchweitz in \cite{RB} in his 
study of algebraic representations of 
Gorenstein rings. $D_{sg}(A)$
measures the smoothness of $A$ in the sense that it is 
homologically smooth if and only if $D_{sg}(A)$ is a trivial category. 
Moreover, Buchweitz showed that $D_{sg}(A)$ is equivalent to the stable category 
$\underline{\mathrm{CM}}(A)$ of Cohen-Macaulay $A$-modules  
as triangulated categories when $A$ is Gorenstein. 
Later, in \cite{DR,DR1,DR2}, Orlov rediscovered this notion
from the perspective of algebraic  
geometry and mathematical physics,
which has a deep relationship with Homological Mirror Symmetry.

In recent years, 
the singular equivalent invariants of singularity categories
have attracted much attention.
For example, in \cite{CS}, 
Chen and Sun introduced the notion of singular 
equivalence of Morita type. 
The famous Kn\"orrer periodicity theorem 
can be realized by singular equivalence of Morita type. 
In \cite{ZZi}, Zhou and Zimmermann showed that for two  
Noetherian algebras $A$ and $B$ which are singular equivalent
of Morita type, their $n$-th Hochschild cohomology groups are isomorphic,
for $n$ big enough.
Later, Wang generalized this result to 
singular equivalence of Morita type with level  
(see \cite{WZF}). 
He also showed that 
the Tate-Hochschild cohomology together with the 
Gerstenhaber bracket 
is invariant under 
such equivalence (see \cite{W1}).  
Another example is the result of Hua and Keller obtained in \cite{HK0}. 
They showed that the singularity category 
of a local hypersurface ring with isolated singularity
recovers the algebra itself via the isomorphism between the zeroth 
Tate-Hochschild cohomology and the Tyurina algebra of this hypersurface.  
For more results on the singular equivalent invariants, 
one may refer to, just to name a few, 
\cite{HuC, KB, LXC, MU, XCC} and
references therein.

In particular, recently
there has been an increasing study of the relationships 
between the singular locus and the
singularity category of a given algebra.
In \cite{RB}, Buchweitz showed that the Jacobian ideal of a quotient $S$ of a formal 
power series ring modulo regular sequence 
annihilates the singularity
category of $S$. Later, in \cite{ITR}, 
Iyengar and Takahashi extended this result to more general rings, 
including equicharacteristic complete Cohen-Macaulay local rings. Recently, in 
\cite{Liu}, Liu proved that when $S$ is  
either an equidimensional finitely generated 
algebra over a perfect field, or an 
equidimensional equicharacteristic 
complete local ring with a
perfect residue field, some power of 
 the Jacobian ideal of $S$ 
annihilates the singularity category of $S$.

Among all these results, of special interest is the 
``reconstruction theorem" for singularity categories.
In his research of the topological reconstruction of singular locus, 
Yu showed that the category of matrix factorizations $\mathrm{MF}(R, f) 
\cong D_{sg}(R/f)$, equipped with some tensor product structure on it, 
recovers the spectrum of the singular locus of the hypersurface $R/f$ 
as a topological space (see \cite{XYY}). 
In \cite{YHHH}, Hirano extended Yu's result to the case of relative singular locus.    
Later, for the 
Gorenstein ring $S$ which is locally a hypersurface on the punctured spectra, 
Matsui showed that the topological structure of its singular locus 
can be reconstructed by its singularity 
category $D_{sg}(S)$ 
(see \cite{MHH}).

Our objective in this paper is certain
Gorenstein normal rings, which are the coordinate rings of 
the quotients of affine spaces by 
finite abelian subgroups of the special linear group. 
In the literature, 
the quotient of an affine
space by an abelian group
is usually called an
{\it abelian}
quotient singularity.
In general, the singularities in this case are not 
isolated. The purpose of this paper is to use 
the singularity category of $S$ to reconstruct the {\it reduced} singular 
locus, denoted by $\sqrt{\mathrm{Sing}\big(\mathrm{Spec}(S)\big)}$,
of $\mathrm{Spec}(S)$. Our main result is the following.

\begin{theorem}\label{main4}
Let $S_i$, $i=1,2$,
be two coordinate 
rings of the quotients of affine 
spaces $V_i$ by finite abelian subgroups of 
$\mathrm{SL}(V_i)$ respectively. 
Suppose that there is a triangle equivalence
$
\Upsilon: D_{sg}(S_1) \rightarrow D_{sg}(S_2)
$
of their 
singularity categories,
then their reduced singular locus are isomorphic:
 $$
 \sqrt{\mathrm{Sing}\big(\mathrm{Spec}(S_1)\big)} \cong 
 \sqrt{\mathrm{Sing}\big(\mathrm{Spec}(S_2)\big)}.
 $$ 
\end{theorem}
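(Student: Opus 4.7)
The plan is to follow the reconstruction philosophy of Matsui \cite{MHH}: attach to $D_{sg}(S)$ an intrinsic ringed-space invariant whose underlying reduced scheme coincides with $\sqrt{\mathrm{Sing}(\mathrm{Spec}(S))}$. Because this invariant is defined purely from the triangulated data, any triangle equivalence $\Upsilon$ would automatically induce an isomorphism of these reduced singular loci, yielding the theorem at once.

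First, I would analyze the local structure of $\mathrm{Spec}(S)$ for $S = k[V]^G$ with $G \subset \mathrm{SL}(V)$ finite abelian. The singular locus is stratified according to the fixed-point data of subgroups $H \le G$, and the transverse slice to each stratum is again a Gorenstein abelian quotient singularity, by the stabilizer (itself abelian and sitting inside some $\mathrm{SL}$). Along codimension-two strata the transverse is a cyclic $\mathrm{SL}_2$ quotient, hence a type-$A$ du Val singularity, which is a hypersurface; higher-codimension strata can, however, produce Gorenstein toric singularities that are not hypersurfaces, so Matsui's hypothesis fails verbatim and the approach has to be refined rather than invoked directly.

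Next, I would exploit the McKay-type picture to equip $D_{sg}(S) \simeq \underline{\mathrm{CM}}(S)$ with extra structure -- for instance, a tensor product coming from tensor products of $G$-representations, or a compatible support datum via the stratification by stabilizers. Matching thick subcategories with unions of strata should produce a bijection between specialization-closed subsets of $\sqrt{\mathrm{Sing}(\mathrm{Spec}(S))}$ and an appropriate class of thick subcategories of $D_{sg}(S)$. It then remains to upgrade this set-theoretic bijection to an isomorphism of reduced schemes by identifying the structure sheaf intrinsically, for instance through a graded piece of Tate--Hochschild cohomology in the spirit of Hua--Keller \cite{HK0}, or via a Balmer-spectrum computation performed stratum by stratum.

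The main obstacle will be ensuring that the entire reconstruction is visible to an arbitrary triangle equivalence, which need not a priori preserve any tensor product, McKay labelling, or externally imposed support datum. The crux is therefore to characterize the singular-locus invariant using only the triangulated structure -- plausibly through Matsui's spectrum of thick subcategories together with a Tate--Hochschild-derived structure sheaf -- and to control the behaviour on the non-hypersurface strata of codimension $\ge 3$, where the standard reconstruction tools stop applying and the specifically abelian, toric nature of the quotient must be brought to bear.
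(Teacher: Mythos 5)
Your proposal correctly diagnoses the problem but does not solve it. You observe that Matsui's reconstruction cannot be invoked verbatim because the higher-codimension strata need not be hypersurfaces, and you correctly identify the real difficulty: any candidate invariant (tensor structure, McKay labelling, support datum, stratification) must be characterized using only the triangulated structure, since an arbitrary triangle equivalence is not assumed to preserve anything else. But you then leave this ``crux'' explicitly open, suggesting only that a Balmer-spectrum computation or a Tate--Hochschild-derived structure sheaf ``plausibly'' resolves it. That is not a proof; it is a list of obstructions and candidate tools.

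The paper's actual argument supplies precisely the missing intrinsic invariant, and by a quite different mechanism from the ones you propose. Starting from the stable category $\underline{\mathrm{CM}}(S)\simeq D_{sg}(S)$, the authors form, for every object $M$, the reduced center $R^M=\sqrt{Z(\mathrm{End}_{\underline{\mathrm{CM}}(S)}(M))}$, and equip these rings with transition maps $\varphi^{M'}_M:R^{M'}\to R^M$ (induced by projection to a direct summand), obtaining an inverse system $\mathcal R^S$. This system is manifestly a functor of the triangulated category alone, so any triangle equivalence identifies the systems and hence their limits. The substantive work is to show that the inverse limit of $\mathcal R^S$ is the coordinate ring of $\sqrt{\mathrm{Sing}(\mathrm{Spec}(S))}$. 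This is done via the noncommutative resolution $\Lambda=G\sharp R=\mathrm{End}_S(\widehat M)$ of Example \ref{keyexample}: one first proves $\mathrm{Sing}(\mathrm{Spec}(S))=\mathrm{Spec}\bigl(\varphi(S)/(\varphi(S)\cap\Lambda e\Lambda)\bigr)$ (Theorem \ref{Singlocus1}), then that $R^{\widehat M}\cong\varphi(S)/\sqrt{\varphi(S)\cap\Lambda e\Lambda}$ via an analysis of the contraction algebra $\Lambda_{\mathrm{con}}$ and its McKay quiver (Theorems \ref{HatM}, \ref{Theo22}), and finally that $R^{\widehat M}$ together with the maps $\varphi_M$ is the inverse limit (Theorem \ref{kkeyl}, using Proposition \ref{Cl22}). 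None of these steps — the inverse system of reduced stable endomorphism centers, the contraction-algebra description of the singular locus, or the limit computation — appears in your proposal, and without them the argument has no engine. The Balmer-spectrum and thick-subcategory route you sketch would require new work to become triangle-equivalence-invariant and is not what the paper does.
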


In other words, the reduced singular
locus of $S$ only depends on its singularity category but nothing else; 
one can reconstruct the reduced
scheme structure of the singular locus from the singularity category. 

In \cite{DO1} Orlov proved that the 
completion of a variety along 
its singular locus determines its singularity category, 
up to the idempotent completion of a triangulated category. 
However, the converse to 
Orlov's result does not hold by, for example, the Kn\"orrer periodicity theorem. 
Nevertheless, our theorem gives a partial 
answer to this converse problem up to some extent.

To show the above result, we heavily use techniques from the theory of McKay quivers,
non-commutative
resolutions introduced by Van den Bergh (see \cite{V,V2}), 
and their contraction algebras introduced by 
Donovan and Wemyss (see \cite{DW}).

The rest of this paper is devoted to the proof of the above theorem. 
It is organized as follows. 
In \S\ref{Pre}, 
we introduce some necessary notions and results 
on singularity categories. 
In \S\ref{SlN}, we describe 
the relations between the singular locus of 
$\mathrm{Spec}(S)$ and the 
contraction algebra of the canonical 
noncommutative resolution of $S$, and prove that 
the reduced center of this contraction algebra is isomorphic to the coordinate ring 
of the reduced singular locus of $\mathrm{Spec}(S)$ (see Theorem \ref{Theo22}).  
In \S\ref{ST}, we introduce a inverse system $\mathcal{R}^S$ from $D_{sg}(S)$, 
and then show that the inverse limit of $\mathcal{R}^S$ is exactly the coordinate ring 
of the reduced singular locus. 
This isomorphism gives the proof of Theorem \ref{main4}. 
Finally, in \S\ref{EX}, we give 
two examples of our main theorem. 
This paper also contains several appendices,
where the proofs of several results in the main
context, usually lengthy and technical, are given.

\begin{convention}\label{Not}
In this paper, we assume the base field $k$ is algebraically closed of characteristic zero.  
All modules are right modules and all complexes are cochain complexes 
unless otherwise specified. 
\end{convention}

\begin{ack} 
We would like to thank Youming Chen, Leilei Liu, 
Song Yang and Xiangdong Yang for several helpful conversations. 
The second author also would like to thank Huijun Fan 
for his encouragement, support and suggestions. 
This work is supported by NSFC No. 12271377 and 12261131498.
\end{ack}

\section{Preliminaries}\label{Pre}

In this section, we collect several necessary concepts and notions which will be used
in later sections.

\subsection{Gorenstein rings and Cohen-Macaulay modules}

\begin{definition}
A commutative Noetherian ring $S$ over $k$ 
is called {\it Gorenstein} if for any prime ideal 
$\mathfrak{p} \subseteq S$, 
$
 \mathrm{Ext}^{i}_{S_\mathfrak{p}}(S_{\mathfrak{p}} / \mathfrak{m}, S_{\mathfrak{p}}) = 0   
$
for all $i \neq d$ and $\mathrm{Ext}^{d}_{S_\mathfrak{p}}(S_{\mathfrak{p}} / 
\mathfrak{m}, S_{\mathfrak{p}}) \cong k$ 
as vector spaces, where 
$\mathfrak{m}$ is the unique maximal ideal of the
local ring $S_{\mathfrak{p}}$ and 
$d := \mathrm{Krull.\, dim}(S_{\mathfrak{p}})$ is the Krull dimension of $S_{\mathfrak{p}}$. 
\end{definition}

By using the
Koszul resolution, it is straightforward to check that the polynomial ring
$k[x_1,\cdots, x_n]$ is Gorenstein. 
More generally, we have the following well-known result.

\begin{proposition}[{\cite[\S2.2]{IT}}]\label{ASstru}
Let $R =k[V]=k[x_1,\cdots, x_n]$ and $G$ be a finite
subgroup in $\mathrm{SL}(V)$, which naturally acts on $V$ and hence on $R$.
Then $R^G$ is a Gorenstein normal domain. 
\end{proposition}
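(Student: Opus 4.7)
The plan is to establish the three assertions --- that $R^G$ is an integral domain, that it is normal, and that it is Gorenstein --- separately. The first two follow formally from the fact that $R^G$ sits inside the polynomial ring $R$ as a direct summand via the Reynolds operator, whereas the Gorenstein property is the content of Watanabe's theorem and genuinely uses the hypothesis $G \subseteq \mathrm{SL}(V)$.

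First I would dispose of the easy parts. Since $R^G$ is a subring of the integral domain $R = k[x_1, \ldots, x_n]$, it is itself a domain, and by Hilbert-Noether finiteness it is a finitely generated (hence Noetherian) $k$-algebra. For normality, the Reynolds operator $\rho = \frac{1}{|G|}\sum_{g \in G} g \colon R \to R^G$ is $R^G$-linear and splits the inclusion $R^G \hookrightarrow R$, exhibiting $R^G$ as a direct summand of $R$ as an $R^G$-module. Given $x \in \mathrm{Frac}(R^G)$ integral over $R^G$, it is \emph{a fortiori} integral over $R$; since $R$ is a UFD, and hence integrally closed, $x \in R$, and being $G$-invariant it lies in $R^G$.

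The substantive step is the Gorenstein property. The strategy is to exhibit an isomorphism of $R^G$-modules $\omega_{R^G} \cong R^G$, where $\omega_{R^G}$ denotes the canonical (dualizing) module. One has the canonical identification $\omega_R \cong R \cdot \omega_0$ with generator $\omega_0 := dx_1 \wedge \cdots \wedge dx_n$, on which any $g \in G$ acts by the scalar $\det(g)$. Since $G \subseteq \mathrm{SL}(V)$, one has $\det(g) = 1$ for every $g$, so $\omega_0$ is $G$-invariant and $(\omega_R)^G = R^G \cdot \omega_0$ is free of rank one over $R^G$. It remains to produce a natural isomorphism $\omega_{R^G} \cong (\omega_R)^G$; I would obtain this from the Hochster-Eagon theorem (which gives that $R^G$ is Cohen-Macaulay, since $R$ is CM and the inclusion $R^G \hookrightarrow R$ is split) together with the standard descent of dualizing modules along a finite group quotient.

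The main obstacle I anticipate is the final identification $\omega_{R^G} \cong (\omega_R)^G$: one must check that the formation of the canonical module commutes with taking $G$-invariants in this graded setting. I would handle this via local duality at the irrelevant maximal ideal: using that $R^G$ is a direct $R^G$-bimodule summand of $R$, the relevant $\mathrm{Ext}$ computation decomposes according to characters of $G$, and with $G \subseteq \mathrm{SL}(V)$ only the trivial character contributes, yielding the claimed isomorphism. This is precisely the classical argument of Watanabe, and it is the only nontrivial ingredient; once it is in place, the three properties together give the proposition.
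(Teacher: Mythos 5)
Your argument is correct; note that the paper gives no proof of Proposition~\ref{ASstru} at all---it is stated as a citation to \cite[\S2.2]{IT}, ultimately resting on the classical theorem of Watanabe---so what you have written is a valid self-contained derivation rather than a rival to anything in the text. The three-part organization (domain and normality from normality of $R$ together with $\mathrm{Frac}(R^G)=\mathrm{Frac}(R)^G$; Cohen--Macaulayness from the Reynolds splitting plus Hochster--Eagon; Gorensteinness from $\omega_{R^G}\cong(\omega_R)^G\cong R^G\cdot\omega_0$ using $\det\equiv 1$ on $G$) is exactly the standard route. The one step you rightly flag as substantive, the identification $\omega_{R^G}\cong(\omega_R)^G$, can be closed without invoking \'etale-in-codimension-one descent: by graded local duality $\omega_{R^G}$ is the graded Matlis dual of $H^n_{\mathfrak m_{R^G}}(R^G)$, which equals $\bigl(H^n_{\mathfrak m_R}(R)\bigr)^G$ by independence of base for local cohomology together with exactness of $(-)^G$ in characteristic $0$; since $H^n_{\mathfrak m_R}(R)$ is, as a graded $G$-module, the Matlis dual of $R$ twisted by $\det^{-1}$, its $G$-invariants are dual to the $\det^{-1}$-isotypic component of $R$, and when $\det\equiv 1$ that component is $R^G$ itself, giving $\omega_{R^G}\cong R^G$ as wanted. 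This is precisely the ``only the trivial character contributes'' count you sketched, now made explicit.
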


Let $S$ be a Noetherian local ring, $\mathfrak{m}$ be the unique 
maximal ideal of $S$
and $M$ be a finitely generated $S$-module. Recall that the {\it depth} of 
$M$, denoted by $\mathrm{depth}(M)$,
is defined as follows:
\begin{enumerate}
\item[(1)] If $M \mathfrak{m} = M$, then $\mathrm{depth}(M) = \infty$;

\item[(2)] If $M \mathfrak{m} \neq M$, then $\mathrm{depth}(M)$ is 
the supremum of the lengths of
$M$-regular sequences in $\mathfrak{m}$.
\end{enumerate}
Here, an $M$-regular sequence means a sequence of  elements 
$\{f_r \}_{1 \leq r \leq m}$ of $S$ such that for any 
element $f_r$ in this sequence, 
\begin{enumerate}
\item[(1)] $f_r$ is not a zero divisor on $M\big/ M(f_{1}, \cdots, f_{r-1})$, and 
\item[(2)] $M\big/ M(f_{1}, \cdots, f_{m})$
is not trivial as $S$-module. 
\end{enumerate}

\begin{definition}
Let $S$ be a commutative Noetherian ring with Krull dimension $d$ and 
$M$ be a finitely generated $S$-module.
$M$ is called {\it maximal Cohen-Macaulay} if 
$\mathrm{depth}(M_{\mathfrak{m}}) = d$
for any maximal ideal $\mathfrak{m}$ of $S$, 
or $M \cong 0$. 
We simply call such modules Cohen-Macaulay. 
\end{definition}

\subsection{The singularity category}
Suppose $S$ is an 
associative Noetherian $k$-algebra.
Let $\mathrm{mod}(S)$ be the abelian category of 
finitely generated $S$-modules.
Let $D^{b}(S)$ be the bounded derived 
category of $\mathrm{mod}(S)$ and $\mathrm{Perf}(S)$ be the full triangulated subcategory of 
$D^{b}(S)$ such that its objects are consist of complexes which are quasi-isomorphic to 
bounded complexes of projective $S$-modules. 
The {\it singularity category} of $S$, denoted by $D_{sg}(S)$, 
is defined to be the Verdier quotient 
$D^{b}(S) / \mathrm{Perf}(S)$ of triangulated categories.  

\begin{definition}\label{Coh}
Let $S$ be a commutative Noetherian ring and 
$\mathrm{CM}(S)$ be the full subcategory of $\mathrm{mod}(S)$ 
consisting 
of Cohen-Macaulay $S$-modules. The {\it stable category
of Cohen-Macaulay $S$-modules}, denoted by
$\underline{\mathrm{CM}}(S)$, is
the category where
\begin{enumerate}
\item[(1)] the objects of $\underline{\mathrm{CM}}(S)$ are the same as
$\mathrm{CM}(S)$; 

\item[(2)] for any objects $X, Y$ in $\underline{\mathrm{CM}}(S)$,
$$\mathrm{Hom}_{\underline{\mathrm{CM}}(S)}(X, Y) 
 := \mathrm{Hom}_{\mathrm{CM}(S)}(X, Y) \big/ I_{X, Y},$$ 
where $I_{X, Y} \subseteq \mathrm{Hom}_{\mathrm{CM}(S)}(X, Y)$ is the 
vector subspace consisting of homomorphisms which factor through some projective $S$-module.   
\end{enumerate}
\end{definition}

When $S$ is a Gorenstein ring, $\underline{\mathrm{CM}}(S)$ is a triangulated category. 
Moreover, we have
$
D_{sg}(S) \cong  \underline{\mathrm{CM}}(S)
$ as triangulated categories 
(see \cite{RB}).

\subsection{Generator of a triangulated  category}

We next recall the notions of generator and 
classical generator of a triangulated category. 

\begin{definition}
Let $\mathcal{T}$ be a triangulated category.
A set $\mathcal{E}$ of objects in $\mathcal{T}$ is said to {\it generate}
$\mathcal{T}$ if for any given $X \in \mathrm{Ob}(\mathcal{T})$,
$
\mathrm{Hom}_{\mathcal{T}}(D, X[i]) = 0
$
for all $D \in \mathcal{E}$ and all $i \in \mathbb{Z}$
implies $X \cong 0$ in $\mathcal{T}$.
$\mathcal{E}$ is called a {\it generator} of $\mathcal{T}$.
\end{definition}

Let $\mathcal{I}_1$ and $\mathcal{I}_2$ be two full triangulated subcategories of $\mathcal{T}$.
Denote by $\mathcal{I}_1 \ast \mathcal{I}_2 $ the full subcategory of $\mathcal{T}$,
whose objects are the objects like $M$ of $\mathcal T$
such that there exists a distinguished triangle 
$$
M_{1} \longrightarrow M \longrightarrow M_2 \rightarrow M_1 [1]
$$
in $\mathcal{T}$ with $M_{i} \in \mathrm{Ob}(\mathcal{I}_i)$. 
Let $\mathcal{E}$ be a set of objects in $\mathcal{T}$.  
Denote by $\langle\mathcal{E}\rangle_1 = \langle\mathcal{E}\rangle$ the smallest full  subcategory of 
$\mathcal{T}$ containing the objects in $\mathcal{E}$ and closed under direct summands, 
finite direct sums and shifts. 
Let $\langle\mathcal{E}\rangle_0$ be the trivial subcategory of $\mathcal{T}$,
 $\langle\mathcal{E}\rangle_{i} := \langle \langle\mathcal{E}\rangle_{i-1} 
 \ast\langle\mathcal{E}\rangle_1 \rangle$, and $\langle
 \mathcal{E}\rangle_{\infty} := \bigcup_{i \geq 0}\langle\mathcal{E}\rangle_{i}$ 
as full subcategories of $\mathcal{T}$.

\begin{definition}
Let $\mathcal{T}$ be a triangulated category.
A set $\mathcal{E}$ of objects 
in $\mathcal{T}$ is said to {\it classically generate} $\mathcal{T}$ if
$$
\mathcal{T} \cong \langle\mathcal{E}\rangle_{\infty}.
$$
$\mathcal{E}$ is called a {\it classical generator} of $\mathcal{T}$. 
\end{definition}

For an object $M$ in $\mathcal T$, it is
classically generated by 
$\mathcal{E}$ if $M \in \langle\mathcal{E}\rangle_{m}$ 
for some $m \in \mathbb{N}$. 
It is direct to see that a set $\mathcal{E}$ of objects in $\mathcal{T}$ classically generates
$\mathcal{T}$ if and only if for any object $M$ in $\mathcal{T}$, it is classically 
generated by $\mathcal{E}$. 
A fact is that
that a classical generator in a triangulated category 
must be a generator in this triangulated category (see \cite[Lemma 13.36.5]{Stacks0}).

\begin{proposition}\label{GeneToSing}
Let
$\pi: D^{b}(S) \rightarrow D^{b}(S) / \mathrm{Perf}(S) \cong  D_{sg}(S)$ 
be
the Verdier quotient functor. Suppose that 
$X \in \mathrm{Ob}\big(D^{b}(S) \big)$ is a classical generator of $D^{b}(S)$. 
Then $\pi(X)$ is a classical generator of 
$D_{sg}(S)$.
\end{proposition}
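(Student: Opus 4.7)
The plan is to exploit that the Verdier quotient functor $\pi$ is essentially surjective on objects and is a triangulated functor, so the layered construction witnessing classical generation in $D^b(S)$ should transport intact to $D_{sg}(S)$. First, I would observe that by the construction of the Verdier quotient $D_{sg}(S)=D^b(S)/\mathrm{Perf}(S)$, the class of objects coincides with that of $D^b(S)$ (only morphisms are localized); in particular every $\bar M\in D_{sg}(S)$ is of the form $\pi(M)$ for some $M\in D^b(S)$. Since $X$ classically generates $D^b(S)$, there is an $m\ge 0$ with $M\in\langle X\rangle_m$, so the real task reduces to proving the inclusion
$$
\pi\bigl(\langle X\rangle_i\bigr)\subseteq \langle\pi(X)\rangle_i
$$
for every $i\ge 0$.

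I would establish this inclusion by induction on $i$. For $i=0$ both sides are the trivial subcategory. For $i=1$, an object of $\langle X\rangle_1$ is a direct summand of a finite direct sum of shifts of $X$; since $\pi$ is additive, commutes with shifts, and sends any direct-sum decomposition in $D^b(S)$ to a direct-sum decomposition in $D_{sg}(S)$, its image lies in $\langle\pi(X)\rangle_1$. For the inductive step, take $M\in\langle X\rangle_i$. By the definition of the layers, $M$ is a direct summand of some $M'$ fitting in a distinguished triangle
$$
M_1\longrightarrow M'\longrightarrow M_2\longrightarrow M_1[1]
$$
in $D^b(S)$ with $M_1\in\langle X\rangle_{i-1}$ and $M_2\in\langle X\rangle_1$. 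Applying the triangulated functor $\pi$ yields a distinguished triangle in $D_{sg}(S)$ whose outer terms, by the inductive hypothesis and the base case, belong to $\langle\pi(X)\rangle_{i-1}$ and $\langle\pi(X)\rangle_1$ respectively; hence $\pi(M')\in\langle\pi(X)\rangle_i$, and taking its direct summand $\pi(M)$ keeps us inside $\langle\pi(X)\rangle_i$.

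There is essentially no real obstacle here: the only points requiring attention are that each of the closure operations defining the layers $\langle\cdot\rangle_i$ — shifts, finite direct sums, passage to direct summands, and cone formation — is respected by the Verdier localization functor, which is automatic from $\pi$ being an additive exact functor of triangulated categories. Combining the inclusion above with essential surjectivity of $\pi$ then shows that every object of $D_{sg}(S)$ lies in $\langle\pi(X)\rangle_\infty$, so $\pi(X)$ is a classical generator of $D_{sg}(S)$, as claimed.
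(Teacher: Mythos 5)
Your proof is correct and follows essentially the same route as the paper's: use essential surjectivity of $\pi$ to lift an object, use classical generation of $D^b(S)$ to place the lift in $\langle X\rangle_m$, and push through $\pi$ to conclude $\pi(\widetilde M)\in\langle\pi(X)\rangle_m$. You merely spell out, via induction on $i$, the (standard but worth stating) fact that a triangle functor satisfies $\pi(\langle X\rangle_i)\subseteq\langle\pi(X)\rangle_i$, which the paper leaves implicit in the phrase ``via the triangle functor $\pi$.''
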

	
\begin{proof}
Pick any object $M$ in $D_{sg}(S)$. 
There is an object $\widetilde{M}$ of $D^{b}(S)$ 
such that 
$\pi(\widetilde{M}) = M$. 
Since $X$ is a classical generator of 
$D^{b}(S)$, $\widetilde{M} \in \langle X \rangle_{m}$ for some $m \in \mathbb{N}$. 	
Next, via the triangle functor $\pi$, we obtain that 	
$
M = \pi(\widetilde{M} ) \in \langle \pi(X) \rangle_{m}
$
in $D_{sg}(S)$. 	
\end{proof}

\section{Contraction algebra and the singular locus}\label{SlN}

We first recall the definition of the singular locus of an affine variety
(see, for example, \cite{Stacks}).

\begin{definition}[Singular locus]
Let $S$ be a commutative Noetherian ring over $k$.
The {\it singular locus} 
of $\mathrm{Spec}(S)$, denoted by $\mathrm{Sing}\big(\mathrm{Spec}(S)\big)$, 
is the subscheme consisting of prime ideals  
$\mathfrak{p}$
of $S$ such that 
the local ring $S_{\mathfrak{p}}$ is not regular.
\end{definition}

The purpose of this section is to give an algebraic  characterization of the singular locus
of $S$, especially for 
the invariant subring of the polynomial ring under the action of a finite abelian 
subgroup of the special 
linear group.

\subsection{Non-commutative resolution and the contraction algebra}\label{CA}

We start with the notion of non-commutative resolutions (NCR), which is introduced by Van
den Bergh (\cite{V,V2}), and has been intensively studied in recent years.

Let $S$ be a Gorenstein normal domain over $k$.
Recall that an $S$-module $M$ is called {\it reflexive} if 
the natural homomorphism 
$$
M \rightarrow (M^\vee)^\vee	\,\,,  m \mapsto (f \mapsto f(m))
$$
is an isomorphism of $S$-modules, where 
$(-)^\vee$ means $\mathrm{Hom}_{S}(-, S)$.

\begin{definition}[Van den Bergh]\label{NCCR}
A {\it non-commutative resolution} (NCR) of
$S$ is a $k$-algebra of the form $\mathrm{End}_{S}(M)$ 
for some reflexive $S$-module $M$,
such that the global dimension of $\mathrm{End}_{S}(M)$ is finite.
\end{definition}

\begin{example}\label{keyexample}
Let $V$ be a $k$-vector space with dimension $n$.
Let $G$ be a finite abelian subgroup in 
$\mathrm{SL}(V)$ and let $R=k[V]$ and
$S = R^G$. Let $\hat{G}$ to be the set of irreducible representations of $G$
(recall that the dimension of any irreducible representation of $G$ is one). 
For any $W \in \hat{G}$, the $S$-module $(W \otimes R)^{G}$ 
is a Cohen-Macaulay $S$-module (see \cite[Section J]{GJL}). 

It is well-known that if an $S$-module $M$ is 
Cohen-Macaulay, then it is reflexive. 
Let
$\Lambda:
= \mathrm{End}_{S}\big(\bigoplus_{W \in \hat{G}}
(W \otimes R)^G \big)$. 
Then 
by Auslander's theorem,
$\Lambda\cong G \sharp R$, 
and is an NCR of $S$ (see \cite[Example 2.4]{IN}), where $G \sharp R$ is 
the skew group algebra of $G$ and $R$. 
\end{example}

Let $\mathrm{ref}(S)$ and $\mathrm{ref}(R, G)$ be 
the categories
of reflexive $S$-modules and of 
$G$-equivariant reflexive $R$-modules respectively. 
Then we have the following.

\begin{lemma}[{\cite[Lemma 3.3]{SV0}}]\label{Equi} 
Let $R$, $G$ and $S$ be as in Example \ref{keyexample}. 
Then the following two functors
\begin{equation*}
\mathrm{ref}(R, G) \to \mathrm{ref}(S),\,
M \mapsto M^{G}
\end{equation*}
and
\begin{equation*}
\mathrm{ref}(S) \to \mathrm{ref}(R, G),\,
N \mapsto \big((R \otimes_{S} N)^{\vee}\big)^{\vee}
\end{equation*}
are inverse equivalences of
two symmetric monoidal categories,
where $(-)^{\vee}:=\mathrm{Hom}_{R}(-, R)$.
\end{lemma}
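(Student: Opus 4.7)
The plan is to pass to an open locus where $G$ acts freely, apply étale descent there, and then extend back to all of $\mathrm{Spec}(R)$ and $\mathrm{Spec}(S)$ using the fact that reflexive modules over a normal Noetherian scheme are determined by their restriction to the complement of any codimension-$\geq 2$ subset.

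First I would set up the open locus. Let $V' \subseteq V$ be the complement of the union $\bigcup_{g \neq e} V^{g}$ of fixed-point loci of non-identity elements. Because $G \subseteq \mathrm{SL}(V)$ contains no pseudo-reflections (a determinant-one matrix fixing a hyperplane is trivial), each $V^{g}$ with $g \neq e$ has codimension $\geq 2$, hence so does $V \setminus V'$ in $V$. Setting $U := V'/G \subseteq \mathrm{Spec}(S)$ and using that the quotient map $\pi \colon V \to \mathrm{Spec}(S)$ is finite, the complement $\mathrm{Spec}(S) \setminus U$ likewise has codimension $\geq 2$. On this locus, $V' \to U$ is a $G$-Galois étale cover.

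Next I would invoke two standard equivalences and compose them. On $U$, Galois étale descent gives a symmetric monoidal equivalence between $G$-equivariant quasi-coherent sheaves on $V'$ and quasi-coherent sheaves on $U$, implemented by $M \mapsto M^{G}$ with inverse $N \mapsto \mathcal{O}_{V'} \otimes_{\mathcal{O}_{U}} N$; it restricts to an equivalence on the reflexive subcategories. Separately, for any normal Noetherian scheme $X$ and open immersion $j \colon U \hookrightarrow X$ with $\mathrm{codim}(X \setminus U, X) \geq 2$, restriction induces an equivalence from reflexive sheaves on $X$ to reflexive sheaves on $U$, with inverse given by $F \mapsto (j_{\ast} F)^{\vee\vee} \cong j_{\ast} F$; this is the usual Hartogs-type extension, and it is $G$-equivariant since $G$ preserves $V'$. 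Chaining these three equivalences yields
$$
\mathrm{ref}(R, G) \;\xrightarrow{\sim}\; \mathrm{ref}(V', G) \;\xrightarrow{\sim}\; \mathrm{ref}(U) \;\xrightarrow{\sim}\; \mathrm{ref}(S).
$$

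Then I would identify the composed functors with those in the statement. Going from left to right, $M \mapsto M|_{V'} \mapsto (M|_{V'})^{G}$ and then extending to $\mathrm{Spec}(S)$; since $M^{G}$ is already a reflexive $S$-module (in characteristic zero the Reynolds operator $\tfrac{1}{|G|}\sum_{g} g$ exhibits $M^{G}$ as an $S$-direct summand of $M$, and $M$ is $S_{2}$ over $S$ because $R$ is finite and $S_{2}$ over $S$), the extension simply recovers $M^{G}$. In the reverse direction, $N \mapsto N|_{U} \mapsto \mathcal{O}_{V'} \otimes_{\mathcal{O}_{U}} N|_{U} = (R \otimes_{S} N)|_{V'}$ and then Hartogs-extending to all of $\mathrm{Spec}(R)$; by the very definition of the reflexive hull on a normal scheme, this extension agrees with $((R \otimes_{S} N)^{\vee})^{\vee}$. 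For symmetric monoidality, étale descent is itself monoidal on $U$, and both sides carry the reflexive tensor product $(-\otimes -)^{\vee\vee}$, which by construction commutes with Hartogs extension; hence the composite equivalence is monoidal.

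The main obstacle I expect is the bookkeeping around reflexivity and the Serre $S_{2}$ condition: verifying that $M^{G}$ is $S_{2}$ over $S$ whenever $M$ is $S_{2}$ over $R$, and dually that $((R \otimes_{S} N)^{\vee})^{\vee}$ really does agree with the pushforward from $V'$ of the étale-descent inverse of $N|_{U}$. Both reductions ultimately rest on the Reynolds operator (giving a $|G|$-invertible splitting in characteristic zero) together with flatness of $R$ over $S$ in codimension one, but the $G$-equivariant formulation requires care to ensure equivariance is preserved through every step.
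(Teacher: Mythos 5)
Your argument is correct, and it is worth noting that the paper does not actually prove Lemma~\ref{Equi}: it simply cites Lemma~3.3 of \v{S}penko and Van den Bergh \cite{SV0}. So there is no internal proof to compare against, and your write-up fills in a genuine gap in the exposition. Your route --- restrict to the free locus $V'$ (open with complement of codimension $\geq 2$ because $G \subseteq \mathrm{SL}(V)$ has no pseudo-reflections), apply $G$-Galois \'etale descent over $U = V'/G$, and then use the codimension-$\geq 2$ extension equivalence for reflexive sheaves on the normal schemes $\mathrm{Spec}(R)$ and $\mathrm{Spec}(S)$ --- is the classical argument for finite quotient singularities and is exactly what the situation of Example~\ref{keyexample} calls for. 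By contrast, \cite{SV0} establish the statement for an arbitrary reductive group $G$ acting on a smooth affine variety, so their Lemma~3.3 is necessarily more elaborate (they work with the stack $[V/G]$ and an appropriate notion of unramified-in-codimension-one locus rather than with a literal Galois \'etale cover); the finite-group case you prove here is the special case that the present paper needs. Two small points that you already address but are the load-bearing checks: that $M^{G}$ is reflexive over $S$ (Reynolds operator gives an $S$-direct-summand of $M$, and $M$ is $S_2$ over $S$ because $R$ is finite and Cohen--Macaulay over $S$), and that Hartogs extension of $(R\otimes_{S}N)|_{V'}$ agrees with the reflexive hull $((R\otimes_{S}N)^{\vee})^{\vee}$ --- both are exactly right and are the reason the functors in the statement coincide with the composed equivalences.
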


By Lemma \ref{Equi}, 
for any $W \in \hat{G}$, we have
\begin{align}\label{Fir789342}
\mathrm{End}_{S}\big((W \otimes R)^{G} \big)	 
 &\cong \mathrm{End}_{\mathrm{ref}(R, G)}(W \otimes R ) 
 \cong \mathrm{Hom}_{\mathrm{ref}(k, G)}(W \otimes W^{\ast},  R ) \nonumber \\ 
& \cong \mathrm{Hom}_{\mathrm{ref}(k, G)}(k,  R ) = S,
\end{align}
where $W^\ast$ is the linear dual of $W$.

We next move to the notion of contraction algebras, 
which is introduced by Donovan and Wemyss in their research of 
NCR and 
will play an important role 
in this paper.

\begin{definition}[Contraction algebra; see \cite{DW}]\label{Connn}
Let $S$ be a Gorenstein commutative ring over $k$ and
$\Lambda^{M} := \mathrm{End}_{S}(S \oplus M)$, 
where $M$ is a Cohen-Macaulay $S$-module.
Let
$[S]$ be the two-sided
ideal of $\Lambda^{M}$ consisting of those $S$-module homomorphisms 
$(S \oplus M) \longrightarrow (S \oplus M)$ which factor
through some object $P \in \mathrm{add}(S)$:
$$
\xymatrix{
  (S \oplus M) \ar[rr] \ar@{.>}[dr]
   &  &    (S \oplus M)    \\
   & P     \ar@{.>}[ur]            }
$$
The {\it contraction algebra} of $\Lambda^M$, denoted by 
$\Lambda^{M}_{\mathrm{con}}$,  
is the quotient algebra
$\Lambda^{M}/[S]$. 
\end{definition}

In the above definition, 
$\mathrm{add}(S)$ is the full subcategory of $\mathrm{mod}(S)$ 
consisting of direct summands of 
some direct sums of $S$, i.e., 
the subcategory of finitely generated projective $S$-modules.

From the above definition, it is direct to see that 
$\Lambda^{M}_{\mathrm{con}} \cong \Lambda^{(M \oplus Q)}_{\mathrm{con}}$ 
as algebras, for any projective $S$-module $Q$. 
By the definition of stable categories (see (\ref{Coh})), 
we immediately get the following. 

\begin{proposition}\label{CML}
Let $M$ be a Cohen-Macaulay $S$-module. Then  	
$$
\Lambda^{M}_{\mathrm{con}} \cong 
\mathrm{End}_{\underline{\mathrm{CM}}(S)}(S\oplus M) \cong 
\mathrm{End}_{\underline{\mathrm{CM}}(S)}(M)
$$
as algebras. 	
\end{proposition}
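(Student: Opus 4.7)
The plan is to observe that both isomorphisms are essentially bookkeeping about ideals of morphisms factoring through projectives, combined with the vanishing of $S$ in the stable category. I will first match $[S]$ with the ideal used in defining $\underline{\mathrm{CM}}(S)$, then show that $S$ is a zero object in $\underline{\mathrm{CM}}(S)$, which absorbs the $S$-summand.

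For the first isomorphism, I would unwind the two definitions and verify they produce the same quotient. Because $S$ is the free $S$-module of rank one, $\mathrm{add}(S)$ is exactly the category of finitely generated projective $S$-modules (any finitely generated projective is a direct summand of some $S^n$). Thus $[S]\subseteq \Lambda^M$ is the two-sided ideal of endomorphisms of $S\oplus M$ that factor through a finitely generated projective $S$-module. On the other side, since $S$ is Gorenstein the module $S$ is itself Cohen-Macaulay, so $S\oplus M\in \mathrm{CM}(S)$ and $\mathrm{End}_{\mathrm{CM}(S)}(S\oplus M)=\mathrm{End}_S(S\oplus M)=\Lambda^M$. By Definition \ref{Coh}(2), the quotient by the ideal $I_{S\oplus M,\, S\oplus M}$ of morphisms factoring through a projective yields $\mathrm{End}_{\underline{\mathrm{CM}}(S)}(S\oplus M)$. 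The two ideals coincide (once one checks that a factorization through an arbitrary projective can, for a finitely generated source, be refined to a factorization through a finitely generated projective), and so $\Lambda^M_{\mathrm{con}}\cong \mathrm{End}_{\underline{\mathrm{CM}}(S)}(S\oplus M)$.

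For the second isomorphism, the key observation is that $S$ is a zero object in $\underline{\mathrm{CM}}(S)$: the identity $\mathrm{id}_S\colon S\to S$ factors trivially through the projective module $S$, hence $\mathrm{id}_S\in I_{S,S}$ and so $\mathrm{id}_S=0$ in $\underline{\mathrm{CM}}(S)$. Consequently the canonical split inclusion $M\hookrightarrow S\oplus M$ and projection $S\oplus M\twoheadrightarrow M$ become mutually inverse isomorphisms in $\underline{\mathrm{CM}}(S)$, because the remaining idempotent (projection onto the $S$-factor) factors through $S$ and therefore vanishes. Passing to endomorphism algebras yields $\mathrm{End}_{\underline{\mathrm{CM}}(S)}(S\oplus M)\cong \mathrm{End}_{\underline{\mathrm{CM}}(S)}(M)$ as $k$-algebras, completing the chain.

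The argument is essentially formal, so there is no serious obstacle; the only point requiring care is the compatibility of the two notions of "factoring through a projective module" used in Definition \ref{Connn} and in Definition \ref{Coh}. Since $S$ is Noetherian and the source $S\oplus M$ is finitely generated, any factorization through a (possibly non-finitely-generated) projective can be reduced to one through a finitely generated projective, and hence through an object of $\mathrm{add}(S)$. After this verification, both isomorphisms follow directly from the definitions.
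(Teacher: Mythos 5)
Your argument is correct and is exactly what the paper leaves implicit: the paper states Proposition~\ref{CML} follows ``immediately'' from Definitions~\ref{Coh} and~\ref{Connn}, and you have supplied the two routine verifications — that the ideal $[S]$ of maps factoring through $\mathrm{add}(S)$ coincides with the ideal $I_{S\oplus M,\,S\oplus M}$ of maps factoring through some projective (using that both source and target are finitely generated, so any such factorization refines to one through a finitely generated projective), and that $S\cong 0$ in $\underline{\mathrm{CM}}(S)$ so the $S$-summand is absorbed. This is the intended proof; your extra care about finitely generated versus arbitrary projectives is a legitimate point the paper's terse definitions gloss over.
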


\subsection{Singular locus of a Gorenstein domain}\label{Slgd}

In this subsection, we give a description of the singular locus
of a Gorenstein domain in terms of its NCR (Theorem  \ref{Singlocus1}).
Let us start with the following.

\begin{lemma}\label{Local}
Suppose $M \in \mathrm{Ob}
\big(\underline{\mathrm{CM}}(S)\big)$ gives an NCR of $S$ and
contains a direct summand $S$ as an $S$-module. Suppose
$\mathfrak{p}$ is a prime ideal of $S$. 
Denote by 
$M_{\mathfrak{p}}$ the localization of $M$ at 
$\mathfrak{p}$.
Then
$M_{\mathfrak{p}}$ is a generator of 
$\underline{\mathrm{CM}}(S_{\mathfrak{p}})$. 
\end{lemma}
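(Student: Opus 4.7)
The plan is to pass to the localization at $\mathfrak{p}$ and show that any $X \in \underline{\mathrm{CM}}(S_\mathfrak{p})$ satisfying the Hom-vanishing hypothesis with respect to $M_\mathfrak{p}$ admits a finite $\mathrm{add}(M_\mathfrak{p})$-resolution; thickness of the Hom-vanishing subcategory then forces $X \cong 0$. To begin, I would verify that localization preserves all the NCR data: $S_\mathfrak{p}$ is Gorenstein and local, $M_\mathfrak{p}$ remains Cohen-Macaulay with $S_\mathfrak{p}$ as a direct summand, and $\Lambda_\mathfrak{p} := \mathrm{End}_{S_\mathfrak{p}}(M_\mathfrak{p}) \cong \mathrm{End}_S(M) \otimes_S S_\mathfrak{p}$ still has finite global dimension, say $d$, because localization cannot increase global dimension.

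Now suppose $X \in \underline{\mathrm{CM}}(S_\mathfrak{p})$ satisfies $\underline{\mathrm{Hom}}(M_\mathfrak{p}, X[i]) = 0$ for every $i \in \mathbb{Z}$. For $i \geq 1$ the stable Hom groups coincide with ordinary $\mathrm{Ext}$, so $\mathrm{Ext}^i_{S_\mathfrak{p}}(M_\mathfrak{p}, X) = 0$ for all $i \geq 1$. Since $S_\mathfrak{p} \in \mathrm{add}(M_\mathfrak{p})$, I would build a right $\mathrm{add}(M_\mathfrak{p})$-resolution $\cdots \to M_1 \to M_0 \to X \to 0$; the Ext-vanishing ensures that $\mathrm{Hom}_{S_\mathfrak{p}}(M_\mathfrak{p}, -)$ turns this into an honest projective resolution of the $\Lambda_\mathfrak{p}$-module $\mathrm{Hom}(M_\mathfrak{p}, X)$. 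Because $\mathrm{gl.dim}(\Lambda_\mathfrak{p}) \leq d$, the $d$-th syzygy $\mathrm{Hom}(M_\mathfrak{p}, K_d)$ is projective over $\Lambda_\mathfrak{p}$, so by Auslander's equivalence $\mathrm{Hom}(M_\mathfrak{p}, -)\colon \mathrm{add}(M_\mathfrak{p}) \simeq \mathrm{proj}(\Lambda_\mathfrak{p})$ there exists $N \in \mathrm{add}(M_\mathfrak{p})$ with $\mathrm{Hom}(M_\mathfrak{p}, N) \cong \mathrm{Hom}(M_\mathfrak{p}, K_d)$ as $\Lambda_\mathfrak{p}$-modules. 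Applying the idempotent $e_{S_\mathfrak{p}} \in \Lambda_\mathfrak{p}$ corresponding to the summand $S_\mathfrak{p} \subseteq M_\mathfrak{p}$ collapses $\mathrm{Hom}(M_\mathfrak{p}, -)\cdot e_{S_\mathfrak{p}}$ to $\mathrm{Hom}(S_\mathfrak{p}, -) = \mathrm{id}$ on $S_\mathfrak{p}$-modules, producing an $S_\mathfrak{p}$-module isomorphism $N \cong K_d$. Hence $K_d \in \mathrm{add}(M_\mathfrak{p})$, and we obtain a finite resolution
\[
0 \to K_d \to M_{d-1} \to \cdots \to M_0 \to X \to 0
\]
by objects of $\mathrm{add}(M_\mathfrak{p})$.

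Decomposing the above sequence into short exact sequences of Cohen-Macaulay modules and reading them as distinguished triangles in $\underline{\mathrm{CM}}(S_\mathfrak{p})$ places $X$ in $\langle M_\mathfrak{p} \rangle_{d+1}$. Finally, the full subcategory $\mathcal{S} := \{Y \in \underline{\mathrm{CM}}(S_\mathfrak{p}) : \underline{\mathrm{Hom}}(Y, X[i]) = 0 \text{ for all } i \in \mathbb{Z}\}$ is closed under shifts, direct summands, and distinguished triangles via the long exact sequence, hence is a thick subcategory; by hypothesis it contains $M_\mathfrak{p}$, so it contains $\langle M_\mathfrak{p} \rangle_{\infty} \ni X$. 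Therefore $\underline{\mathrm{Hom}}(X, X) = 0$, forcing $\mathrm{id}_X = 0$ and $X \cong 0$ in $\underline{\mathrm{CM}}(S_\mathfrak{p})$.

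The main obstacle is the descent step concluding $K_d \in \mathrm{add}(M_\mathfrak{p})$ from the bare fact that $\mathrm{Hom}(M_\mathfrak{p}, K_d)$ is a projective $\Lambda_\mathfrak{p}$-module: Auslander's equivalence only gives an isomorphism of $\Lambda_\mathfrak{p}$-modules with some $\mathrm{Hom}(M_\mathfrak{p}, N)$, and promoting this to an isomorphism $K_d \cong N$ of $S_\mathfrak{p}$-modules is where the hypothesis that $S_\mathfrak{p}$ is a direct summand of $M_\mathfrak{p}$ is indispensable---it is precisely the projector $e_{S_\mathfrak{p}}$ that cleanly extracts the $S_\mathfrak{p}$-module data from the $\Lambda_\mathfrak{p}$-module data on both sides.
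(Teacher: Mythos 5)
Your proof is correct, but it takes a genuinely different route from the paper. The paper reduces the statement to showing $M_{\mathfrak{p}}$ is a classical generator of $D^{b}(S_{\mathfrak{p}})$: it localizes a bounded $\Lambda^{e}$-projective resolution to conclude that $\Lambda_{\mathfrak{p}}$ is homologically smooth, so $\Lambda_{\mathfrak{p}}$ classically generates $D^{b}(\Lambda_{\mathfrak{p}})\cong\mathrm{Perf}(\Lambda_{\mathfrak{p}})$, and then pushes that generator through the Verdier quotient functor $(-)\otimes^{\mathbb{L}}_{\Lambda_{\mathfrak{p}}}\Lambda_{\mathfrak{p}}e_{\mathfrak{p}}\colon D^{b}(\Lambda_{\mathfrak{p}})\to D^{b}(S_{\mathfrak{p}})$ coming from the Serre quotient $\mathrm{mod}(S_{\mathfrak{p}})\simeq\mathrm{mod}(\Lambda_{\mathfrak{p}})/\mathrm{mod}(\Lambda_{\mathfrak{p}}e_{\mathfrak{p}}\Lambda_{\mathfrak{p}})$, invoking Proposition \ref{GeneToSing}. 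You instead work entirely inside $\underline{\mathrm{CM}}(S_{\mathfrak{p}})$: the full range of the hypothesis $\underline{\mathrm{Hom}}(M_{\mathfrak{p}},X[i])=0$ gives $\mathrm{Ext}^{\geq 1}_{S_{\mathfrak{p}}}(M_{\mathfrak{p}},\Omega^{j}X)=0$ for every $j\geq 0$ (worth saying explicitly, since you need it for the syzygies, not just $X$ itself), so $\mathrm{Hom}(M_{\mathfrak{p}},-)$ converts a free resolution of $X$ into a $\Lambda_{\mathfrak{p}}$-projective resolution; finite global dimension of $\Lambda_{\mathfrak{p}}$ plus the projectivization equivalence $\mathrm{add}(M_{\mathfrak{p}})\simeq\mathrm{proj}(\Lambda_{\mathfrak{p}})$ (together with fully faithfulness of $\mathrm{Hom}(M_{\mathfrak{p}},-)$, which is exactly where the summand $S_{\mathfrak{p}}\subseteq M_{\mathfrak{p}}$ enters) then truncates this to a finite $\mathrm{add}(M_{\mathfrak{p}})$-resolution of $X$, and the thick-subcategory argument closes it out. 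Your route is more elementary and self-contained, avoiding the Serre-quotient/derived-localization machinery, at the cost of more explicit homological bookkeeping; it establishes $X\in\langle M_{\mathfrak{p}}\rangle_{\infty}$ only for $X$ satisfying the Hom-vanishing hypothesis, which is exactly enough for the generator claim but weaker than the classical-generator statement the paper proves en route. Both arguments quietly assume the relevant data localizes well; your claim that localization cannot raise $\mathrm{gl.dim}$ is correct for a module-finite $S$-algebra but deserves the one-line justification that finitely generated $\Lambda_{\mathfrak{p}}$-modules descend to $\Lambda$ and finite projective resolutions localize.
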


\begin{proof}
To prove this lemma, it is 
sufficient to show that  $M_{\mathfrak{p}}$ is a classical generator of 
$\underline{\mathrm{CM}}(S_{\mathfrak{p}})$. 
Since
$\underline{\mathrm{CM}}(S_{\mathfrak{p}}) 
\cong D_{sg}(S_{\mathfrak{p}})$ as triangulated categories, 
this is equivalent to showing that 
$M_{\mathfrak{p}}$ is a classical generator of 
$D_{sg}(S_{\mathfrak{p}})$.
By Proposition \ref{GeneToSing} it is enough to show that  
$M_{\mathfrak{p}}$ is a classical generator of 
$D^{b}(S_{\mathfrak{p}})$. 

From the definition of NCR, we know that  
$\Lambda := \mathrm{End}_{S}(M)$ is homologically smooth. 
Let $P^{\bullet}$ be a bounded projective $\Lambda^e$-module resolution of $\Lambda$. 
Note that $\Lambda$ is an $S$-algebra and thus $\Lambda^e$ is an $S^e$-algebra. 
Localizing 
at $\mathfrak{p}$, 
we obtain a bounded projective 
$\Lambda^e_\mathfrak{p}$-module 
resolution $P^{\bullet}_\mathfrak{p}$ of $\Lambda_\mathfrak{p}$, which 
then implies that $\Lambda_\mathfrak{p}$ is homologically smooth. 

We thus obtain a triangle equivalence
$D^{b}(\Lambda_\mathfrak{p}) \cong \mathrm{Perf}(\Lambda_\mathfrak{p})$, which gives
$D^{b}(\Lambda_\mathfrak{p})$ a classical generator $\Lambda_\mathfrak{p}$. 
Next, let $e_\mathfrak{p}$ be the indecomposable idempotent of $\Lambda_{\mathfrak{p}}$ 
corresponding to the direct summand $S_\mathfrak{p}$ of $M_\mathfrak{p}$. 
Then 
since $\mathrm{mod}(S_{\mathfrak{p}})$ is equivalent to 
the Serre quotient 
$\mathrm{mod}(\Lambda_{\mathfrak{p}})/ 
\mathrm{mod}(\Lambda_{\mathfrak{p}} e_{\mathfrak{p}} \Lambda_{\mathfrak{p}})$ 
as abelian categories,
there is the following localization functor 
$$
(-) \otimes^{\mathbb{L}}_{\Lambda_\mathfrak{p}} 
\Lambda_\mathfrak{p} e_\mathfrak{p} : 
D^{b}(\Lambda_\mathfrak{p}) \rightarrow D^{b}(S_\mathfrak{p}).
$$ 
Thus similarly to
the proof of Proposition \ref{GeneToSing},
$\Lambda_\mathfrak{p} e_\mathfrak{p} \cong M_\mathfrak{p}$
is a classical generator of $D^{b}(S_\mathfrak{p})$.
\end{proof}

\begin{remark}
With the same argument, one can show that
$M$ is a generator of 
$\underline{\mathrm{CM}}(S)$.
\end{remark}

\begin{theorem}\label{Singlocus1} 
Let $S$ be a Gorenstein normal domain, 
and $\Lambda = \mathrm{End}_{S}(\bigoplus^{m}_{i = 0} M_i)$ 
be an NCR of
$S$ such that $M_0 = S$ and $M_i$'s 
are indecomposable 
Cohen-Macaulay $S$-modules for all $i$.
Let $\varphi: S \hookrightarrow \Lambda$ be the canonical injection of algebras 
induced by the $S$-module structure of 
$\bigoplus^{m}_{i = 0} M_i$, 
and let $e \in \Lambda$ be the
idempotent corresponding to the direct summand $S$.
Then
$$
\mathrm{Sing} \big(\mathrm{Spec}(S) \big) 
= \mathrm{Spec}\Big( \varphi(S)/ \big(\varphi(S) \cap (\Lambda e \Lambda)\big)\Big).
$$
as subschemes of $\mathrm{Spec}(S)$. 
\end{theorem}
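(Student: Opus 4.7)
The plan is to reduce the claimed equality of subschemes to the pointwise criterion that a prime $\mathfrak p\in\mathrm{Spec}(S)$ satisfies $\varphi(S)\cap \Lambda e\Lambda\subseteq\varphi(\mathfrak p)$ if and only if $S_{\mathfrak p}$ is not regular. Both sides will be controlled by the contraction algebra of the localized NCR $\Lambda_{\mathfrak p}$.

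First I would localize at $\mathfrak p$. Because $\varphi(S)$ lies in the center of $\Lambda$ and localization commutes with intersections of sub-$S$-modules of $\Lambda$,
\begin{equation*}
\bigl(\varphi(S)\cap \Lambda e\Lambda\bigr)_{\mathfrak p}\;=\;S_{\mathfrak p}\cap \Lambda_{\mathfrak p}e\Lambda_{\mathfrak p},
\end{equation*}
and clearing denominators shows $\varphi(S)\cap \Lambda e\Lambda\not\subseteq\varphi(\mathfrak p)$ is equivalent to $1_{\Lambda_{\mathfrak p}}\in\Lambda_{\mathfrak p}e\Lambda_{\mathfrak p}$, i.e.\ to $\Lambda_{\mathfrak p}/\Lambda_{\mathfrak p}e\Lambda_{\mathfrak p}=0$. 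As in the proof of Lemma~\ref{Local}, $\Lambda_{\mathfrak p}=\mathrm{End}_{S_{\mathfrak p}}(\bigoplus_{i=0}^{m}M_{i,\mathfrak p})$ is an NCR of $S_{\mathfrak p}$ whose $S_{\mathfrak p}$-summand corresponds to $e$. By the elementary identification in Definition~\ref{Connn}, the ideal $\Lambda_{\mathfrak p}e\Lambda_{\mathfrak p}$ consists of precisely those endomorphisms that factor through $\mathrm{add}(S_{\mathfrak p})$, so Proposition~\ref{CML} gives
\begin{equation*}
\Lambda_{\mathfrak p}/\Lambda_{\mathfrak p}e\Lambda_{\mathfrak p}\;\cong\;\mathrm{End}_{\underline{\mathrm{CM}}(S_{\mathfrak p})}\!\Bigl(\bigoplus_{i=1}^{m}M_{i,\mathfrak p}\Bigr).
\end{equation*}

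Next I would check vanishing of this contraction algebra against regularity. If $S_{\mathfrak p}$ is regular then the Auslander--Buchsbaum formula forces every Cohen--Macaulay $S_{\mathfrak p}$-module to be free, hence $\underline{\mathrm{CM}}(S_{\mathfrak p})=0$ and the endomorphism algebra above vanishes. Conversely, if $S_{\mathfrak p}$ is singular then $\underline{\mathrm{CM}}(S_{\mathfrak p})\neq 0$; Lemma~\ref{Local} supplies $\bigoplus_{i=0}^{m}M_{i,\mathfrak p}$ as a generator of this triangulated category, and since $S_{\mathfrak p}$ is zero in the stable category the complementary summand $\bigoplus_{i=1}^{m}M_{i,\mathfrak p}$ remains a generator, is therefore nonzero, and so its identity morphism is nonzero.

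Chaining the equivalences yields
\begin{equation*}
\mathfrak p\in\mathrm{Sing}(\mathrm{Spec}(S))\;\iff\;\underline{\mathrm{CM}}(S_{\mathfrak p})\neq 0\;\iff\;\Lambda_{\mathfrak p}/\Lambda_{\mathfrak p}e\Lambda_{\mathfrak p}\neq 0\;\iff\;\mathfrak p\supseteq\varphi(S)\cap \Lambda e\Lambda,
\end{equation*}
which is the desired equality of subschemes of $\mathrm{Spec}(S)$. I expect the most delicate step to be the converse direction in the third equivalence: the abstract generator property in $\underline{\mathrm{CM}}(S_{\mathfrak p})$ has to be translated into nonvanishing of a concrete endomorphism algebra, which amounts to verifying that the projective summand $S_{\mathfrak p}$ can safely be discarded without destroying the generating property and that a nonzero object in an additive category has a nonzero identity.
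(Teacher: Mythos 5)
Your proposal is correct and takes essentially the same route as the paper: both reduce the equality of subschemes to a pointwise statement at each prime $\mathfrak p$, both identify $\Lambda_{\mathfrak p}/\Lambda_{\mathfrak p}e\Lambda_{\mathfrak p}$ with the contraction algebra $\mathrm{End}_{\underline{\mathrm{CM}}(S_{\mathfrak p})}(\cdot)$, and both use Lemma~\ref{Local} for the singular direction. The main difference is organizational. You treat the two implications symmetrically through the stable category: regularity kills $\underline{\mathrm{CM}}(S_{\mathfrak p})$ by Auslander--Buchsbaum, and singularity gives a nonzero generator, hence a nonzero identity morphism. The paper instead handles the regular direction by an explicit matrix-algebra computation, showing $\Lambda_{\mathfrak p}\cong M_{r\times r}(S_{\mathfrak p})$ and that $(\Lambda e\Lambda)_{\mathfrak p}$ is the whole matrix ring via the surjection $\Psi$, and handles the singular direction by contradiction (assuming the contraction algebra vanishes and deducing that $\underline{\mathrm{CM}}(S_{\mathfrak q})=0$). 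Your version is a bit cleaner and more uniform; the paper's version of the regular case is more self-contained in that it avoids quoting the fact that all Cohen--Macaulay modules over a regular local ring are free. One small thing worth spelling out in your writeup is the identification $\Lambda_{\mathfrak p}e\Lambda_{\mathfrak p}=[S_{\mathfrak p}]$, i.e.\ that the two-sided ideal generated by $e$ consists precisely of endomorphisms factoring through $\mathrm{add}(S_{\mathfrak p})$; you gesture at it as ``elementary'' and it is standard, but Definition~\ref{Connn} does not literally state this equality, so a line of justification (or a pointer to where the paper uses it, e.g.\ Notation~\ref{list:notations} or equation~(\ref{SGUB8735})) would make the step airtight.
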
 

In the above theorem,
note that since $\bigoplus^{m}_{i = 0} M_i$ 
is a reflexive $S$-module and $S$ is a domain, 
there is no 
zero divisor in $\bigoplus^{m}_{i = 0} M_i$, and therefore 
$\varphi$ is an injection.

\begin{proof}[Proof of Theorem \ref{Singlocus1}] 
(1) We first show
$$
\mathrm{Spec}\Big(\varphi(S)/ \big(\varphi(S) \cap (\Lambda e \Lambda) \big) \Big)
 \subseteq \mathrm{Sing}\big(\mathrm{Spec}(S) \big).
$$
To this end, let 
$x \in \mathrm{Spec}(S)$ be a smooth point 
which corresponds to a prime ideal 
$\mathfrak{p}$. 
Consider the regular ring $S_{\mathfrak{p}}$ and the $S_{\mathfrak{p}}$-algebra 
$\Lambda_{\mathfrak{p}}$. 
To simplify the notations, set $\overline{M}:=\bigoplus^{m}_{i = 0} M_i$. 
Since $S_{\mathfrak{p}}$ is regular
and $\overline{M}_{\mathfrak{p}}$ is a  Cohen-Macaulay $S_{\mathfrak{p}}$-module,
$ \overline{M}_{\mathfrak{p}}$ is 
free over $S_{\mathfrak{p}}$. 
Thus we have
\begin{equation}\label{eq:otoh}
\Lambda_{\mathfrak{p}} \cong \mathrm{End}_{S}(\overline{M})_{\mathfrak{p}} \cong 
\mathrm{End}_{S_\mathfrak{p}}(\overline{M}_{\mathfrak{p}}) \cong M_{r \times r}(S_{\mathfrak{p}}),
\end{equation}
where 
$r$ is rank of  
$ \overline{M}_{\mathfrak{p}}$.
Moreover, 
if we denote by $\Psi: \Lambda e \otimes_{S} e 
\Lambda \cong  \mathrm{Hom}_{S}(S, \overline{M}) \otimes_{S} 
\mathrm{Hom}_{S}(\overline{M}, S)\twoheadrightarrow 
\Lambda e \Lambda \subseteq \mathrm{End}_{S}(\overline{M})$ 
the composition
of homomorphisms,
we get
\begin{align}
(\Lambda e \Lambda)_{\mathfrak{p}} & \cong \Psi\big((\Lambda e \otimes_{S} e\Lambda)\big)_{\mathfrak{p}} 
\cong  \Psi_{\mathfrak{p}} 
\big( (\Lambda e)_{\mathfrak{p}} \otimes_{S_{\mathfrak{p}}} 
(e \Lambda)_{\mathfrak{p}} \big)\nonumber \\
 & \cong \Psi_{\mathfrak{p}} 
 \Big( \mathrm{Hom}_{S_{\mathfrak{p}}}\big( S_{\mathfrak{p}}, 
 (S_{\mathfrak{p}})^{\oplus r}\big) \otimes_{S_{\mathfrak{p}}} \mathrm{Hom}_{S_{\mathfrak{p}}}\big((S_{\mathbf{p}})^{\oplus r}, 
 S_{\mathfrak{p}}\big) \Big)\nonumber \\
 & \cong M_{r \times r}(S_{\mathfrak{p}}),\label{eq:otohd}
\end{align} 
where $\Psi_{\mathfrak{p}}$ is the localization of $\Psi$ at 
$\mathfrak{p}$. 
Combining \eqref{eq:otoh} and \eqref{eq:otohd} we obtain that 
$\Lambda_{\mathfrak{p}} / (\Lambda e \Lambda)_{\mathfrak{p}} \cong 0$. 
In the meantime, there is a natural injection
$$
\varphi(S) / \big(\varphi(S) \cap (\Lambda e \Lambda)\big) 
\hookrightarrow \Lambda / (\Lambda e \Lambda) 
$$
induced by $\varphi$. 
This injection induces the following injection after localization
$$
\varphi(S)_{\mathfrak{p}} / \big(\varphi(S)_{\mathfrak{p}} 
\cap (\Lambda e \Lambda)_{\mathfrak{p}} \big) 
\hookrightarrow \Lambda_{\mathfrak{p}} / (\Lambda e \Lambda)_{\mathfrak{p}}.
$$
Since 
$\Lambda_{\mathfrak{p}} / (\Lambda e \Lambda)_{\mathfrak{p}} \cong 0$,
we get that $ \Big( \varphi(S) / \big(\varphi(S) \cap (\Lambda e \Lambda) \big) \Big)_{\mathfrak{p}} 
\cong \varphi(S)_{\mathfrak{p}} / \big(\varphi(S)_{\mathfrak{p}} 
\cap (\Lambda e \Lambda)_{\mathfrak{p}} \big) \cong 0$.
Thus $x$ is not in the subscheme $\mathrm{Spec}\Big(\varphi(S)/ 
\big(\varphi(S) \cap (\Lambda e \Lambda) \big) \Big)$
of $\mathrm{Spec}(S)$, which implies that
$$
\mathrm{Spec}\Big(\varphi(S)/ \big(\varphi(S) \cap (\Lambda e \Lambda) \big) \Big) 
\subseteq \mathrm{Sing}\big(\mathrm{Spec}(S) \big).
$$

\noindent (2) We next show
$$
\mathrm{Sing}\big(\mathrm{Spec}(S) \big) \subseteq 
\mathrm{Spec}\Big(\varphi(S)/ \big(\varphi(S) 
\cap (\Lambda e \Lambda) \big) \Big).
$$
To this end, let $\mathfrak{q}$ be an arbitrary prime ideal of $S$ corresponding to a point in 
$\mathrm{Sing}\big( \mathrm{Spec}(S)\big)$.  
To show the above inclusion, it is sufficient to show that the algebra
$
\varphi(S)_{\mathfrak{q}} / \big(\varphi(S) \cap (\Lambda e \Lambda) \big)_{\mathfrak{q}}
$
is nontrivial. 

In fact, $\Lambda_{\mathfrak{q}} / (\Lambda e \Lambda)_{\mathfrak{q}}$ 
is an algebra over 
$ \varphi(S)_{\mathfrak{q}} / \big(\varphi(S) \cap (\Lambda e \Lambda) 
\big)_{\mathfrak{q}} \cong 
\varphi(S)_{\mathfrak{q}} / \big(\varphi(S)_{\mathfrak{q}} 
\cap (\Lambda e \Lambda)_{\mathfrak{q}}\big)$. 
Meanwhile, 
$\varphi(S)_{\mathfrak{q}} / \big(\varphi(S)_{\mathfrak{q}} 
\cap (\Lambda e \Lambda)_{\mathfrak{q}}\big)$ as a subalgebra  
contains the unit of 
$\Lambda_{\mathfrak{q}} / (\Lambda e \Lambda)_{\mathfrak{q}}$. 
Then $\varphi(S)_{\mathfrak{q}}/ (\varphi(S)_{\mathfrak{q}} 
\cap (\Lambda e \Lambda)_{\mathfrak{q}})$ is trivial 
 if and only if $\Lambda_{\mathfrak{q}} / (\Lambda e \Lambda)_{\mathfrak{q}}$ is trivial. 
Thus, to prove that 
$
\varphi(S)_{\mathfrak{q}} / \big(\varphi(S) \cap (\Lambda e \Lambda) \big)_{\mathfrak{q}}
$
is nontrivial, 
it suffices to show that 
$$
 \Lambda_{\mathfrak{q}} / (\Lambda e \Lambda)_{\mathfrak{q}} 
 \cong \Lambda_{\mathfrak{q}} / \Lambda_{\mathfrak{q}} e_{\mathfrak{q}} 
 \Lambda_{\mathfrak{q}} \cong 
\mathrm{End}_{S_{\mathfrak{q}}}( \overline{M}_{\mathfrak{q}} ) 
/ [S_{\mathfrak{q}}] \cong 
\mathrm{End}_{\underline{\mathrm{CM}}(S_{\mathfrak{q}})}( \overline{M}_{\mathfrak{q}} )
$$
is nontrivial.  
We prove this by contradiction. 

Assume that 
$\mathrm{End}_{\underline{\mathrm{CM}}(S_{\mathfrak{q}})}
(\overline{M}_{\mathfrak{q}} ) = 0$.
By Lemma \ref{Local}
we know that $ \overline{M}_{\mathfrak{q}}$ is a generator of 
$\underline{\mathrm{CM}} (S_{\mathfrak{q}})$. 
Now for any object 
$M \in \mathrm{Ob}\big(\underline{\mathrm{CM}} (S_{\mathfrak{q}})\big)$, 
$ 
\mathrm{Hom}_{\underline{\mathrm{CM}} 
(S_{\mathfrak{q}})}(\overline{M}_{\mathfrak{q}}, M)
$ 
is an $\mathrm{End}_{\underline{\mathrm{CM}} (S_{\mathfrak{q}})}
(\overline{M}_{\mathfrak{q}})$-module. 
Furthermore, the following composition
$$
 \mathrm{Hom}_{\underline{\mathrm{CM}} (S_{\mathfrak{q}})}(\overline{M}_{\mathfrak{q}}, 
M[i]) \otimes \mathrm{End}_{\underline{\mathrm{CM}} 
(S_{\mathfrak{q}})}(\overline{M}_{\mathfrak{q}}) \rightarrow 
\mathrm{Hom}_{\underline{\mathrm{CM}} (S_{\mathfrak{q}})}(\overline{M}_{\mathfrak{q}}, M[i])
$$
is surjective for any $i \in \mathbb{Z}$. It implies that 
$\mathrm{Hom}_{\underline{\mathrm{CM}} 
(S_{\mathfrak{q}})}(\overline{M}_{\mathfrak{q}}, M[i]) 
= 0$ for all $i \in \mathbb{Z}$. 
Meanwhile, since   
$\overline{M}_{\mathfrak{q}}$ is a generator of 
$\underline{\mathrm{CM}}(S_{\mathfrak{q}})$,  
we get that  $M \cong 0$ 
in $\underline{\mathrm{CM}}(S_{\mathfrak{q}})$. 
Hence, the objects in 
$\underline{\mathrm{CM}}(S_{\mathfrak{q}})$ are all trivial. 
Therefore, the triangulated category 
$\underline{\mathrm{CM}}(S_{\mathfrak{q}})$ is trivial,
and thus $S_{\mathfrak{q}}$ is a homologically smooth ring. 
However, this contradicts to that ${\mathfrak{q}}$ is 
a point in 
$\mathrm{Sing}\big(\mathrm{Spec}(S)\big)$. 
Therefore 
$
 \Lambda_{\mathfrak{q}} / (\Lambda e \Lambda)_{\mathfrak{q}} 
 \cong \underline{\mathrm{End}}_{S_{\mathfrak{q}}}
 (\overline{M}_{\mathfrak{q}})
 $
 is nontrivial, which then implies that 
\[
 \mathrm{Sing}\big(\mathrm{Spec}(S) \big) \subseteq 
 \mathrm{Spec}\Big(\varphi(S)/ \big(\varphi(S) \cap (\Lambda e \Lambda) \big) \Big). 
\qedhere\]
\end{proof}

By taking the reduced schemes 
of the isomorphism in Theorem \ref{Singlocus1}, 
we immediately get the following (in this paper,
for a scheme $X$, we use $\sqrt{X}$ to denote
its reduced scheme):

\begin{corollary}\label{cor:reducedloci}
The isomorphism in Theorem  \ref{Singlocus1}
induces the following isomorphism
$$\sqrt{ \mathrm{Sing}\big(\mathrm{Spec}(S) \big)}\cong
 \mathrm{Spec}\big(\varphi(S)/\sqrt{\varphi(S) \cap (\Lambda e \Lambda)} \big).
$$
\end{corollary}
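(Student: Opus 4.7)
The plan is to deduce this directly from Theorem \ref{Singlocus1} by passing to reduced schemes on both sides of the equality. The key general fact I would invoke is that for any commutative ring $A$ and any ideal $I\subseteq A$, the reduced scheme associated to $\mathrm{Spec}(A/I)$ is $\mathrm{Spec}(A/\sqrt{I})$. This follows because the nilradical of $A/I$ is $\sqrt{I}/I$, so $(A/I)/\mathrm{nil}(A/I)\cong A/\sqrt{I}$.

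First, I would rewrite the statement of Theorem \ref{Singlocus1} as an equality of closed subschemes of $\mathrm{Spec}(S)$, namely $\mathrm{Sing}(\mathrm{Spec}(S))=\mathrm{Spec}(\varphi(S)/J)$ with $A=\varphi(S)$ and $I=J=\varphi(S)\cap(\Lambda e\Lambda)$. Since $\varphi:S\hookrightarrow\Lambda$ is injective (as observed right after Theorem \ref{Singlocus1}), we may identify $\varphi(S)$ with $S$ and regard $J$ as an honest ideal of $S$.

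Next, I would apply the general fact recalled above: taking the reduced scheme on both sides of the equality and using that the reduction operation $\sqrt{-}$ is functorial with respect to closed immersions of affine schemes, we obtain
\[
\sqrt{\mathrm{Sing}\bigl(\mathrm{Spec}(S)\bigr)}
\;\cong\;\sqrt{\mathrm{Spec}\bigl(\varphi(S)/J\bigr)}
\;\cong\;\mathrm{Spec}\bigl(\varphi(S)/\sqrt{J}\bigr),
\]
which is exactly the displayed isomorphism of the corollary.

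There is essentially no serious obstacle here: the statement is a formal consequence of Theorem \ref{Singlocus1} together with the definition of the reduced scheme of an affine scheme. The only minor point to verify, if one wishes to be careful, is that the radical $\sqrt{\varphi(S)\cap(\Lambda e\Lambda)}$ is taken inside $\varphi(S)$ rather than inside $\Lambda$, but this is automatic from the quotient $\varphi(S)/(\varphi(S)\cap(\Lambda e\Lambda))$ appearing in the theorem.
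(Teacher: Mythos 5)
Your proposal is exactly the paper's argument: the authors state that the corollary follows ``by taking the reduced schemes of the isomorphism in Theorem~\ref{Singlocus1},'' and your proof simply spells this out using the standard fact that the reduced scheme of $\mathrm{Spec}(A/I)$ is $\mathrm{Spec}(A/\sqrt{I})$. Correct, and same approach.
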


\subsection{The singular locus of $\mathrm{Spec}(S)$}\label{subsect:singularlocus}

From now on, we focus on the case in Example \ref{keyexample}. 
For reader's convenience, let us list some notations that we will be repeatedly
using:

\begin{notation}\label{list:notations}
\begin{itemize}
\item[$-$]
Let $\widehat{M} := \bigoplus_{W \in \hat{G}}(W \otimes R)^G$, which,  
by Auslander's theorem, is 
isomorphic to $R$ as $S$-modules. 

\item[$-$] Let $\Lambda := \mathrm{End}_{S}(\widehat{M})$ and
let
$e$ be the indecomposable idempotent of $\Lambda$ corresponding to the direct summand 
$S$ of $\widehat{M}$. Note that when $W$ is the  trivial $G$-representation, 
then $(W \oplus R)^G = R^G = S$. 

\item[$-$] Let $\varphi: S \hookrightarrow \Lambda$ be the canonical injection of algebras 
induced by the $S$-module structure on
$\widehat{M}$.

\item[$-$]  Let
$\Lambda_{\mathrm{con}} := \Lambda^{\widehat{M}}_{\mathrm{con}} \cong 
\Lambda /[S] \cong \mathrm{End}_{\underline{\mathrm{CM}}(S)}
( \bigoplus_{\chi} M_\chi )$ (see Definition \ref{Connn} and Proposition \ref{CML}). Here, $[S]$ is the 
ideal of $\mathrm{End}_{S}(\widehat{M})$ consisting of homomorphisms which factor
through some finitely generated projective $S$-module. Note that 
$\Lambda_{\mathrm{con}} \cong \Lambda / \Lambda e \Lambda$.

\item[$-$] 
Let $R^{\widehat M}$ be the reduced ring of the center $Z(\Lambda_{\mathrm{con}})$ of 
$\Lambda_{\mathrm{con}}$. 
\end{itemize}
\end{notation}

The purpose of this subsection
is to describe the reduced singular locus of $\mathrm{Spec}(S)$ 
by means of $\widehat M$.
In Theorem \ref{HatM} we show that $\mathrm{Spec}(R^{\widehat{M}}) \cong 
\sqrt{\mathrm{Sing}\big(\mathrm{Spec}(S)\big)}$ as subschemes of $\mathrm{Spec}(S)$. 
Recall that from Corollary \ref{cor:reducedloci},
$
\sqrt{\mathrm{Sing} \big(\mathrm{Spec}(S) \big) }
= \mathrm{Spec}\big( \varphi(S)/\sqrt{\varphi(S) \cap (\Lambda e \Lambda)}\big)
$
as subschemes of $\mathrm{Spec}(S)$. Thus by 
combining these two theorems and considering the associated rings, 
we obtain that
$
\varphi(S)/ \sqrt{\varphi(S) \cap (\Lambda e \Lambda)} \cong R^{\widehat{M}},
$
which is in fact 
induced by the natural map $\varphi: S \rightarrow \Lambda$ (see Theorem \ref{Theo22}).

\subsubsection{Characters of $G$ and components of the singular locus}\label{SSG}

Recall that the characters of $G$ are in one-one correspondence 
with the indecomposable idempotents of the group algebra $kG$, which 
is further in
one-one correspondence with the indecomposable 
idempotents of $\Lambda$. 
Let $\chi: G \rightarrow k^{\ast}$ be a character of $G$.
Denote the corresponding idempotent by $e_\chi$, then
$e_{\chi} =  \frac{1}{|G|} \sum\limits_{g \in G} \chi(g) (g \otimes 1) \in \Lambda $. 
Let $\chi_0$ be the trivial character of $G$. 
It is direct to see that $e$ is the indecomposable 
idempotent corresponding to $\chi_0$.  

Since $G$ is a finite abelian subgroup of $\mathrm{SL}(V)$, $G$ 
can be viewed as a group consisting of diagonal matrices in $\mathrm{SL}(V)$. 
Fix such a diagonalization for $G$, and denote
by $\{E_i \}_{1 \leq i \leq n}$ the basis of $V$ with respect to this diagonalization. 
Notice that
any $E_i$ is associated to a character, denoted by $\chi_{E_i}$, of $G$, 
given by $\chi_{E_i}(g) = g_{i} \in k^*$ for any $g \in G$, 
where $g(E_i) = g_i E_i$ and $g(-)$ is the action of $g$ on $V$.  

Dually, let $R = k[V] = k[x_{1}, \cdots, x_n]$, 
where $\{x_i\}$ is the set of basis dual to $\{E_i\}$.
Then each $x_i$ is also associated to a character, denoted by 
$\chi_{x_i}$ of $G$, such that $\chi_{x_i} = \chi_{E_i}^{-1}$. 
Moreover, for any monomial $f \in R$, 
define a character $\chi_f$ of $G$ as $\chi_{f}(g):= 
g_f \in k$, for any $g \in G$, where $g(f) = g_f f$ and 
$g(-)$ is the dual action of $g$ on $R = k[V]$. 

For any character $\chi$ of $G$, let 
$V_{\chi}$ be the one dimensional irreducible representation of $G$ given by
$\chi: G \rightarrow k^{\ast}$. 
Let 
$M_{\chi} := (V_{\chi} \otimes R)^G$ be an 
indecomposable direct summand of $\widehat{M}$. 
Note that $(V_{\chi} \otimes R)^G \subseteq R$ as $S$-modules. 
Moreover, since 
$(V_{\chi} \otimes R)^G = \mathrm{Hom}_{\mathrm{ref}(k, G)}(V_{\chi^{-1}}, R)$, 
$M_{\chi}$ is generated by monomials like $f$ such that $\chi_f = \chi^{-1}$  as $S$-modules.

At the same time, 
it is obvious that for any monomial $f \in R$,
\begin{align}
e_{\chi} (1 \otimes f) e_{\chi \chi_{f}} 
= e_{\chi} e_{\chi} (1 \otimes f) = e_{\chi} (1 \otimes f)\label{Comppp} 
\quad\mbox{and}\quad
(1 \otimes f) e_{\chi} = e_{\chi \chi_{f}^{-1}} (1 \otimes f) e_{\chi},
\end{align}
where $\chi\chi_f$ and $\chi\chi_f^{-1}$ are the products of characters.

Furthermore, let $\Lambda e_{\chi} \subseteq \Lambda$ be product of $\Lambda$ with $e_\chi$. 
Then we have $\Lambda e_{\chi} = 
(G \sharp R)e_{\chi} \cong R \cong \widehat{M}$ as $S$-modules. 
In the meantime, since $e \in \Lambda$ is the idempotent corresponding to the
summand $S$ of $\widehat{M}$, 
\begin{align}\label{fajb62469}
\Lambda e \cong \mathrm{Hom}_{S}(S, \widehat{M}) \cong \widehat{M}
\end{align}
as $\Lambda^{op}$-modules. Hence, we may identify 
$\Lambda e$ with $\widehat{M}$ as $S \otimes \Lambda^{op}$-modules. 
 
Consider the direct summand 
$e_{\chi} \widehat{M} :=  e_\chi \Lambda \otimes_{\Lambda} \widehat{M}$  
of $\widehat{M}$ for any $e_\chi$. 
We have
\begin{align*}
e_\chi \widehat{M} & \cong e_\chi \Lambda e  \cong e_\chi (G \sharp R) e
 \cong (e_\chi \otimes R) e 
 \cong (e_\chi \otimes \widehat{M}) e 
 \\
& \cong \Big(e_\chi \otimes \big(\bigoplus_{\lambda} M_{\lambda}\big)\Big) e \cong 
\bigoplus_{\lambda} \Big(e_\chi \otimes  M_{\lambda}\Big) e \\
& \cong \bigoplus_{\lambda} \big(e_{\chi} e_{\lambda} \otimes M_{\lambda} \big) \cong e_{\chi} \otimes M_{\chi} \cong M_{\chi} 
\end{align*}
as $S$-modules.
In above identities, we identify the idempotents of 
$\Lambda$ with the ones of $kG$.
Thus, for any two characters $\chi$ and $\chi'$ of $G$,
\begin{align}
e_{\chi} \Lambda e_{\chi'} & \cong e_{\chi} \Lambda \otimes_{\Lambda} 
\mathrm{End}_{S}( \widehat{M} ) \otimes_{\Lambda} \Lambda e_{\chi'} \nonumber \\ 
& \cong \mathrm{Hom}_{S}\big(e_{\chi'} \Lambda \otimes_{\Lambda} \widehat{M}, 
e_{\chi} \Lambda \otimes_{\Lambda} \widehat{M} \big) \nonumber \\  
& \cong \mathrm{Hom}_{S}(M_{\chi'}, M_{\chi}) \label{Che222}
\end{align} 
as $S$-modules. In particular, $e_{\chi} \Lambda e_{\chi} \cong S$ for any $\chi$ 
(see (\ref{Fir789342})), and thus in the following we may identify 
$e_{\chi} \Lambda e_{\chi}$ with $S$. Note that the inverse isomorphism  
$ S \cong e_{\chi} \Lambda e_{\chi}$ maps $f \in S$ to 
$e_\chi(1 \otimes f)e_{\chi} \in e_{\chi} \Lambda e_{\chi}$. 

Replacing $e_\chi$ by  $e$, by the above argument 
we get that $e \widehat{M} \cong M_{\chi_0} \cong S$ as $S$-modules. 
It is direct to see that  
\begin{align}\label{AHYG6739}
e_{\chi} \Lambda e \cong \mathrm{Hom}_{S}(S, M_{\chi}) 
\end{align}
as $S$-modules. 
By directly summing up all $\{M_{\chi}\}_{\chi}$, it follows that  
$e \Lambda \cong \mathrm{Hom}_{S}(\widehat{M}, S)$ as $\Lambda \otimes S^{op}$-modules.
Now recall that from the proof of Theorem \ref{Singlocus1},
$\Psi: \Lambda e \otimes_{S} e \Lambda \rightarrow \Lambda e \Lambda$ 
is given by the composition 
map of $\mathrm{End}_{S}(\widehat{M})$. (Note $\Psi$ is a 
surjection.)
From the above two isomorphisms (\ref{fajb62469}) and (\ref{AHYG6739}) we then obtain that 
$$
\Lambda e \Lambda  \cong \Psi( \Lambda e \otimes_{S} e \Lambda ) \cong \Psi\big(\mathrm{Hom}_{S}(S, \widehat{M})
\otimes_{S} \mathrm{Hom}_{S}(\widehat{M}, S)\big)  
 \cong [S] \subseteq \Lambda 
$$
as $\Lambda^e$-modules, and 
then $\Lambda_{\mathrm{con}} \cong \Lambda  \big/  \Lambda e \Lambda $ 
as $\Lambda^e$-algebras. 
Thus, by $\Lambda e \Lambda \cong [S]$ and by
Proposition \ref{CML}, we get that 
\begin{align}\label{SGUB8735}
e_{\chi} \Lambda_{\mathrm{con}} e_{\chi} & \cong e_{\chi} \Lambda e_{\chi} 
/ e_{\chi} \Lambda e \Lambda e_{\chi} \cong \mathrm{End}_{S}(M_{\chi}) \big/ 
\big( \mathrm{End}_{S}(M_{\chi}) \cap \Lambda e \Lambda \big)  \nonumber \\ 
& \cong \mathrm{End}_{S}(M_{\chi}) \big/ 
\big( \mathrm{End}_{S}(M_{\chi}) \cap [S] \big)  
 \cong \Lambda^{M_{\chi}}_{\mathrm{con}} \cong 
\mathrm{Hom}_{\underline{\mathrm{CM}}(S)}(M_{\chi}, M_{\chi})
\end{align}
as $S$-algebras.

\begin{lemma}\label{IdemSpli}
Let $\bar{I}_{\chi} := e_{\chi} \Lambda e \Lambda e_{\chi} \subseteq e_{\chi} \Lambda e_{\chi} \cong S$ be the ideal of $S$ and 
$I_{\chi} : = \sqrt{\bar{I}_{\chi}} \subseteq S$  
be the radical of $\bar{I}_{\chi}$. 
Then 
$$
\mathrm{Spec}\big(S/ I_{\chi}\big) \subseteq \sqrt{\mathrm{Sing}\big(\mathrm{Spec}(S)\big)}
$$
as subschemes of $\mathrm{Spec}(S)$.
\end{lemma}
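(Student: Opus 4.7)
The plan is to apply Corollary \ref{cor:reducedloci}, which identifies $\sqrt{\mathrm{Sing}(\mathrm{Spec}(S))}$ with the closed subscheme $\mathrm{Spec}\big(\varphi(S)/\sqrt{\varphi(S)\cap(\Lambda e\Lambda)}\big)$ of $\mathrm{Spec}(S)$. Using the injection $\varphi\colon S\hookrightarrow\Lambda$ to identify $\varphi(S)$ with $S$, the claimed inclusion of closed subschemes $\mathrm{Spec}(S/I_\chi)\subseteq\sqrt{\mathrm{Sing}(\mathrm{Spec}(S))}$ is equivalent to the reverse inclusion of defining ideals $\sqrt{\varphi(S)\cap(\Lambda e\Lambda)}\subseteq I_\chi$ in $S$. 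Since $I_\chi$ is already radical by definition, it is enough to establish $\varphi(S)\cap(\Lambda e\Lambda)\subseteq I_\chi$.

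To prove this latter inclusion, I would perform an idempotent squeeze by $e_\chi$ on both sides. For any $s\in S$ with $\varphi(s)\in\Lambda e\Lambda$, since $\Lambda e\Lambda$ is a two-sided ideal of $\Lambda$, the product $e_\chi\varphi(s)e_\chi$ lies in $e_\chi\Lambda e\Lambda e_\chi=\bar I_\chi$. Under the identification $\Lambda\cong G\sharp R$, the element $\varphi(s)$ is $1\otimes s$, which commutes with every $g\in G$ because $s\in R^G=S$; hence $e_\chi\varphi(s)e_\chi=e_\chi(1\otimes s)e_\chi$, and this is exactly the image of $s$ under the isomorphism $S\cong e_\chi\Lambda e_\chi$ recalled at the end of Subsection \ref{SSG}. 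Therefore $s\in\bar I_\chi\subseteq I_\chi$, which is the required ideal inclusion. Passing to radicals then gives $\sqrt{\varphi(S)\cap(\Lambda e\Lambda)}\subseteq I_\chi$, and hence the inclusion of subschemes.

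The only point requiring care is the compatibility of the various identifications: one must check that restricting the idempotent squeeze $\mu_\chi\colon\Lambda\to e_\chi\Lambda e_\chi$, $x\mapsto e_\chi x e_\chi$, to $\varphi(S)$ agrees, after the identifications $\varphi(S)\cong S$ and $e_\chi\Lambda e_\chi\cong S$, with the identity on $S$. This is immediate from the formula $\varphi(s)=1\otimes s$ and the explicit inverse isomorphism $f\mapsto e_\chi(1\otimes f)e_\chi$ recorded in Notation \ref{list:notations} and Subsection \ref{SSG}, so there is no substantive obstacle beyond this bookkeeping.
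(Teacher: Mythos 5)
Your proof is correct, but it takes a genuinely different route from the paper's. The paper proves this lemma by invoking the external result \cite[Corollary~4.9]{Liu}, which identifies $\sqrt{\mathrm{Sing}\big(\mathrm{Spec}(S)\big)}$ with $\mathrm{Spec}\big(S/\sqrt{\mathrm{ann}_S\big(D_{sg}(S)\big)}\big)$, and then observes (via \eqref{SGUB8735}) that $\bar I_\chi = \mathrm{ann}_S(M_\chi)$; the containment then follows from the trivial inclusion $\mathrm{ann}_S\big(D_{sg}(S)\big) \subseteq \mathrm{ann}_S(M_\chi)$. You instead lean on Corollary~\ref{cor:reducedloci} (which the paper has already established from Theorem~\ref{Singlocus1}) and close the argument with an elementary idempotent squeeze: for $s\in S$ with $\varphi(s)\in\Lambda e\Lambda$, the two-sidedness of $\Lambda e\Lambda$ gives $e_\chi\varphi(s)e_\chi\in e_\chi\Lambda e\Lambda e_\chi=\bar I_\chi$, and the identification $e_\chi\varphi(s)e_\chi=e_\chi(1\otimes s)e_\chi\leftrightarrow s$ under $e_\chi\Lambda e_\chi\cong S$ shows this is exactly $s$. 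This is more self-contained — it needs nothing beyond what the paper has already proved by this point — and arguably shorter. What the paper's route buys in exchange is an explicit link between $\bar I_\chi$ and the annihilator ideal $\mathrm{ann}_S(M_\chi)$, which slots into the broader theme of annihilation of singularity categories emphasized in the introduction. There is no circularity issue in your argument: Theorem~\ref{Singlocus1} and Corollary~\ref{cor:reducedloci} appear before Lemma~\ref{IdemSpli} and do not depend on it. The only bookkeeping point you flag — compatibility of the two identifications $\varphi(S)\cong S$ and $e_\chi\Lambda e_\chi\cong S$ under the squeeze $x\mapsto e_\chi x e_\chi$ — is indeed exactly the explicit formula the paper records at the end of \S\ref{SSG}, so there is no gap.
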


\begin{proof}
First, since $\mathrm{Spec}(S) = V/G$ 
is an irreducible scheme,
it is easy to see that $S$ is an equidimensional finitely generated ring over $k$.   
Set ideals
$$ 
\mathrm{ann}_{S}(X):= \{f \in S \, \, | \, \,  \mathrm{End}_{D_{sg}(S)}(X) \, f = 0  \} 
$$
for any object $X$ of $D_{sg}(S)$,  
and 
$$
\mathrm{ann}_{S}\big(D_{sg}(S)\big) := \bigcap\limits_{X \in D_{sg}(S)} \mathrm{ann}_{S}(X) 
$$
(see \cite[\S3]{Liu}). Here, we view
$\mathrm{End}_{D_{sg}(S)}(X)$ as an $S$-modules.   
From \cite[Corollary 4.9]{Liu}, we know that  
$$
\sqrt{\mathrm{Sing}\big(\mathrm{Spec}(S)\big)} =
\mathrm{Spec}\Big(S/ \sqrt{\mathrm{ann}_{S}\big(D_{sg}(S)\big)} \Big). 
$$
Therefore, to prove this lemma, it suffices to show that 
$$
\sqrt{\mathrm{ann}_{S}(D_{sg}(S))} \subseteq I_{\chi},
$$
which is further enough to show that 
$$
\mathrm{ann}_{S}(D_{sg}(S)) \subseteq e_{\chi} \Lambda e \Lambda e_{\chi}. 
$$
Since $D_{sg}(S) \cong \underline{\mathrm{CM}}(S)$ 
and $\mathrm{End}_{\underline{\mathrm{CM}}(S)}(M_{\chi}) \cong S/ e_{\chi} \Lambda e \Lambda e_{\chi}$ (by (\ref{SGUB8735}) above), 
we have 
$
e_{\chi} \Lambda e \Lambda e_{\chi} = \mathrm{ann}_{S}(M_\chi)
$.
This implies that 
$
\mathrm{ann}_{S}(D_{sg}(S)) \subseteq e_{\chi} \Lambda e \Lambda e_{\chi}
$,
and the lemma follows. 
\end{proof}

\subsubsection{The quiver description}

In this paper, we shall also use the quiver 
presentations of $\Lambda$ 
and $\Lambda_{\mathrm{con}}$, which we denote by $Q_\Lambda$ and 
$Q_{\mathrm{con}}$ respectively. 
Indeed, 
$Q_\Lambda$ is the McKay quiver associated to the faithful representation $V$ of $G$,
and
$Q_{\mathrm{con}}$ is obtained from $Q_\Lambda$ by removing 
the vertex that corresponds to idempotent $e$ and the arrows that start from or end at this vertex from $Q_{\Lambda}$. 
For any indecomposable idempotent $e'$, 
we also use $e'$ to denote its corresponding vertex. 

\begin{proposition}\label{Cl11}
$Q_{\mathrm{con}}$ is connected. 	
\end{proposition}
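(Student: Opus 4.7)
The plan is to identify $Q_\Lambda$ explicitly with (the underlying undirected graph of) the Cayley graph of $\hat G$ with generating multiset $\{\chi_{E_i}^{\pm 1}\}_{i=1}^n$, deduce connectedness of $Q_\Lambda$ from the faithfulness of $V$, and then exploit the $\mathrm{SL}(V)$-relation $\chi_{E_1}\chi_{E_2}\cdots\chi_{E_n}=\chi_0$ to reroute around the removed vertex $\chi_0=e$. By (\ref{Comppp}), taking linear monomials $f=x_i$, each vertex $\chi\in\hat G$ has exactly $n$ outgoing arrows, going to $\chi\chi_{E_i}$ for $i=1,\dots,n$; this yields the claimed Cayley-graph description, and $Q_{\mathrm{con}}$ is the induced subgraph on $\hat G\setminus\{\chi_0\}$.

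Two ingredients then feed into the argument. First, the faithfulness of $V$ means $\bigcap_i\ker\chi_{E_i}=\{1\}$ in $G$, which by Pontryagin duality on the finite abelian group $G$ is equivalent to $\chi_{E_1},\dots,\chi_{E_n}$ generating $\hat G$; this immediately forces $Q_\Lambda$ to be connected as an undirected graph. Second, since $G\subseteq\mathrm{SL}(V)$, the product $\prod_{i=1}^n\chi_{E_i}=\chi_{\det V}=\chi_0$, equivalently $\chi_{E_i}^{-1}=\prod_{j\neq i}\chi_{E_j}$ for every $i$; this supplies the combinatorial flexibility needed to bypass $\chi_0$.

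To prove connectedness of $Q_{\mathrm{con}}$, I would take any two vertices $\chi,\chi'\in\hat G\setminus\{\chi_0\}$, pick a path joining them in $Q_\Lambda$, and modify it locally to avoid $\chi_0$. The key reduction is that it suffices to connect any two neighbors $\mu,\mu'\in\{\chi_{E_i}^{\pm 1}\}$ of $\chi_0$ inside $Q_\Lambda\setminus\{\chi_0\}$, because any subsegment of the form $\mu\to\chi_0\to\mu'$ can then be spliced out. For $n\geq 3$, a length-two detour $\mu\to\mu\chi_{E_c}^{\pm 1}\to\mu'$ works provided $c\notin\{a,b\}$ (where $\mu=\chi_{E_a}^{\pm 1}$, $\mu'=\chi_{E_b}^{\pm 1}$) and the sign is chosen so that $\mu\chi_{E_c}^{\pm 1}\neq\chi_0$. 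For $n=2$ the Cayley graph is literally an $|G|$-cycle and $Q_{\mathrm{con}}$ is a path; the case $n=1$ is degenerate since then $G$ is trivial and $Q_{\mathrm{con}}$ is empty.

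The main obstacle is the case analysis for $n\geq 3$: one must verify that, whatever coincidences occur among the $\chi_{E_i}^{\pm 1}$, at least one choice of detour avoids $\chi_0$ as its intermediate vertex. The $\mathrm{SL}$-relation is precisely what ensures this, because it allows each generator to be rewritten as a product of the remaining $n-1$ generators, providing redundant routes between neighbors of $\chi_0$. The bookkeeping is elementary but requires handling the sub-cases according to whether $\mu=\mu'$, whether $\mu\mu'^{-1}$ is itself one of the generators, and whether the available choices of $c$ accidentally collide with the excluded indices; in each sub-case the identity $\chi_{E_i}^{-1}=\prod_{j\neq i}\chi_{E_j}$ furnishes an alternative path of length at most $n-1$ that remains inside $Q_{\mathrm{con}}$.
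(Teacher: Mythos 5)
Your reformulation of $Q_\Lambda$ as the (undirected) Cayley graph of $\hat G$ on the generating multiset $\{\chi_{E_i}^{\pm 1}\}$ is correct and clarifying, and the Pontryagin-duality observation that faithfulness of $V$ is exactly the statement that $\{\chi_{E_i}\}$ generates $\hat G$ is a clean way to see connectedness of $Q_\Lambda$. This is a genuinely different framing from the paper, which instead argues by contradiction (assume $Q_{\mathrm{con}}=Q^1_{\mathrm{con}}\sqcup Q^2_{\mathrm{con}}$) and splits into the cases ``all $\chi_{x_i}$ equal'' vs.\ ``some $\chi_{x_i}\neq\chi_{x_j}$''.

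However, your central detour claim is false as stated. You assert that for $n\geq 3$ a path $\mu\to\mu\chi_{E_c}^{\pm1}\to\mu'$ connects two neighbours $\mu=\chi_{E_a}^{\pm1}$, $\mu'=\chi_{E_b}^{\pm1}$ of $\chi_0$ provided $c\notin\{a,b\}$. But for the second step to be an edge one needs $\mu'(\mu\chi_{E_c}^{\pm1})^{-1}\in\{\chi_{E_i}^{\pm1}\}$, and there is no reason for a product of three generators to again be a generator. Concretely, take $G=\mathbb Z/7$, $g=\mathrm{diag}(\zeta,\zeta,\zeta^5)$ (so the $\mathrm{SL}$ relation holds), $\mu=\chi_{E_1}=\chi$, $\mu'=\chi_{E_3}=\chi^5$, $c=2$: then $\mu\chi_{E_2}=\chi^2$, and $\chi^5(\chi^2)^{-1}=\chi^3\notin\{\chi^{\pm1},\chi^{\pm5}\}$, so $\chi^2$ is not adjacent to $\chi^5$. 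The detour you actually want is $\mu\to\mu\mu'\to\mu'$ (first step multiplies by $\mu'$, second divides by $\mu$), whose middle vertex $\mu\mu'$ equals $\chi_0$ only when $\mu'=\mu^{-1}$; this is essentially the two-step path the paper exhibits via the vertex $e_{\chi_{x_j}\chi_{x_i}^{-1}}$. That leaves precisely the residual case $\mu'=\mu^{-1}$, which you do not treat and which is where the $\mathrm{SL}$ relation or a longer route around $\chi_0$ must actually be invoked; as written, the appeal to ``the identity $\chi_{E_i}^{-1}=\prod_{j\neq i}\chi_{E_j}$ furnishes an alternative path'' is a gesture, not an argument, since the partial products along that route can themselves hit $\chi_0$.

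Two remarks that may help you close the gap. First, once you are working with the undirected Cayley graph the cleanest route avoids detour bookkeeping entirely: a finite connected vertex-transitive graph on at least three vertices has no cut vertex (every vertex would be a cut vertex, yet in the block tree the non-articulation vertices of a leaf block are never cut vertices), so deleting $\chi_0$ cannot disconnect it; the cases $|\hat G|\leq 2$ are trivial. Second, your treatment of $n=2$ as ``literally an $|G|$-cycle'' is correct and matches the paper's case (1), but note the paper's case split is by whether all $\chi_{x_i}$ coincide, not by $n$, so the two decompositions do not align.
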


\begin{proof}We prove by contradiction.
Assume that $Q_{\mathrm{con}}$ is not connected. 
Decompose $Q_{\mathrm{con}}$ 
into two disjoint quivers $Q_{\mathrm{con}}^1$ and $Q_{\mathrm{con}}^2$.

Note that $Q_{\mathrm{con}}$ is obtained from $Q_\Lambda$ by removing the vertex  
$e$ and the arrows that start from or end at $e$. 
Since $Q_\Lambda$ is the McKay quiver associated to the faithful representation $V$ of $G$,  
it is well-known that $Q_\Lambda$ is connected (see, for example, \cite[Theorem 3.1]{SMRM}).
This means that, as subquivers of $Q_\Lambda$,
$Q_{\mathrm{con}}^1$ and $Q_{\mathrm{com}}^2$
are connected only by paths that
go through the vertex $e$ in $Q_{\Lambda}$.

Now since $\Lambda = G \sharp R$ and $\hat{G}$ consists of 
one-dimensional representations, 
each arrow in $Q_{\Lambda}$ is  
represented by some formal variable $x_i$, and hence each arrow in $Q_{\mathrm{con}}$ is also
given by $x_i$.

By (\ref{Comppp}), for any $x_i$ in $\{x_r \}_r$, 
$e$ is connected to $e_{\chi_{x_i}}$ by an arrow, denoted by $\iota^e_{x_i}$,  
corresponding to 
$e(1 \otimes x_i)e_{\chi_{x_i}} \in \Lambda$. 
Moreover,  
$e$ is connected with $e_{\chi_{x_i}^{-1}}$ by an arrow, denoted by $\tau^e_{x_i}$,  
corresponding to 
$e_{\chi_{x_i}^{-1}}(1 \otimes x_i)e \in \Lambda$.
Now, by the connectedness of $Q_{\Lambda}$, 
it is easy to see that there are only two types of possible
relations between $\{\iota^e_{x_r}\}_r$ and $\{\tau^e_{x_r}\}_r$: 

\begin{enumerate}
\item[(1)]  $\chi_{x_i} = \chi_{x_j}$ 
for any $x_i \neq x_j$. We then have that for any $x_i$, either arrow $\tau^e_{x_i}$ 
	starts from some vertex 
in $Q_{\mathrm{con}}^2$ and $\iota^e_{x_i}$ 
	ends at some vertex in $Q_{\mathrm{con}}^1$,  
or arrow $\tau^e_{x_i}$ 
	starts from some vertex 
in $Q_{\mathrm{con}}^1$ and $\iota^e_{x_i}$ 
	ends at some vertex in $Q_{\mathrm{con}}^2$. 
	
\item[(2)] There are two $x_i$ and $x_j$ such that $\chi_{x_i} \neq \chi_{x_j}$ and 
one of the following holds: 
\begin{enumerate}	
\item[(i)] $\tau^e_{x_i}$ 
	starts from some vertex 
in $Q_{\mathrm{con}}^2$ and $\iota^e_{x_j}$ 
	ends at some vertex in $Q_{\mathrm{con}}^1$; 

\item[(ii)] $\tau^e_{x_i}$ 
	starts from some vertex 
in $Q_{\mathrm{con}}^1$ and $\iota^e_{x_j}$ 
	ends at some vertex in $Q_{\mathrm{con}}^2$.
\end{enumerate}	
\end{enumerate}

For case $(1)$, since $\chi_{x_i} = \chi_{x_j}$ 
for any $x_i \neq x_j$, the group $G$ must be the acyclic group 
$\mathrm{diag}(\epsilon, \epsilon, \cdots, \epsilon)$ with the natural action on $V$, 
where $\epsilon$ is an $n$-th root of unit. 
Then, in this case, 
the underlying graph of $Q_{\Lambda}$ is 
a cycle with length $n$, which
implies that $Q_{\mathrm{con}}$ is connected.  
It contradicts to our assumption. 

For the subcase (i) in case (2),  
the arrow $\tau^e_{x_i}$ starts from the vertex 
$e_{\chi_{x_i}^{-1}} \in Q_{\mathrm{con}}^1$ and 	
$\iota^e_{x_j}$ ends at the vertex 
$e_{\chi_{x_j}} \in Q_{\mathrm{con}}^2$. 
In the meantime, $x_i$ gives an arrow which starts from the vertex 
$e_{\chi_{x_j} \chi_{x_i}^{-1}}$ and ends at the vertex 
$e_{\chi_{x_j}}$ by (\ref{Comppp}). 
Moreover, $x_j$ gives an arrow which starts 
from the vertex  
$e_{\chi_{x_i}^{-1}}$ and 
ends at the vertex $e_{\chi_{x_j} \chi_{x_i}^{-1}}$. 
Thus, due to $e_{\chi_{x_i}^{-1}} \in Q_{\mathrm{con}}^1$ 
and $e_{\chi_{x_j}} \in Q_{\mathrm{con}}^2$, 
$Q_{\mathrm{con}}^1$ is connected with $Q_{\mathrm{con}}^2$ through 
the vertex $e_{\chi_{x_j} \chi_{x_i}^{-1}}$ in 
$Q_{\Lambda}$. 
But $e_{\chi_{x_j} \chi_{x_i}^{-1}} \neq e$ due to 
$\chi_{x_i} \neq \chi_{x_j}$. This means there is a path which 
connects $Q_{\mathrm{con}}^1$ and $Q_{\mathrm{con}}^2$ and does not   
pass through the vertex $e$. 
It follows that 
$Q_{\mathrm{con}}^1$ and $Q_{\mathrm{con}}^2$ 
are not disjoint in $Q_{\mathrm{con}}$. 
It contradicts to our assumption again.  
 
For the subcases (ii) in (2),
applying the same method, it contradicts to our assumption, too. 
The proposition now follows.
\end{proof}

This proposition implies that for any two characters $\chi'$ and $\chi''$, there is a path
in $Q_{\mathrm{con}}$,
connecting the vertices $e_{\chi'}$ and $e_{\chi''}$.

\subsubsection{Decomposition of $\Lambda_{\mathrm{con}}$}\label{Dec9822415684}

In this subsection, we 
study the relations between the coordinate ring
of the irreducible components
of the singular locus and the algebra $R^{\widehat{M}}$.

Recall that by Proposition \ref{Cl11}, $Q_{\mathrm{con}}$ 
is connected. It suggests that $\Lambda_{\mathrm{con}}$ 
cannot be viewed as a direct sum of algebras.
Nevertheless,
in this subsection we
construct an algebra homomorphism $\zeta_{0}$ (see (\ref{Lab345})) from   
$\Lambda_{\mathrm{con}}$ to a direct sum of some skew group algebras (see Lemma \ref{Key2}).
This algebra homomorphism induces an injection of algebras 
(see Theorem \ref{Key4}), which, interpreted geometrically, says that
the irreducible components of $\sqrt{\mathrm{Sing}\big(\mathrm{Spec}(S)\big)}$
gives a cover of $\mathrm{Spec}(R^{\widehat{M}})$ (see Lemma
 \ref{KeyPr}).

To this end, let us first introduce a finite set $\{(H, \chi)\}$, where
$H$ is a nontrivial subgroup of $G$ such that
it is maximal among all subgroups which
have the same invariant subspace as that of $H$,
and $\chi$ is a character of $G$.
Let $\tilde{G} : = \{(H, \chi)\} /\!\sim$, where
$(H, \chi) \sim (H', \chi')$ if and only if $H = H'$ in $G$ and $\chi'|_{H} = \chi|_{H}$.  
Let $\tilde{G}_0 \subseteq \tilde{G}$ be the subset consisting of classes $(H, \chi)$ such 
that $\chi|_{H}$, the restriction of $\chi$ on $H$, is nontrivial. 

Suppose $(H, \chi)$ represents an element in $\tilde{G}_0$.
Let $W_H$ be the $H$-invariant subspace of $V$.
Let $R_H:=k[W_H]$ and $S_H:=R_H^{G/H}$.
We are going to show in Theorem \ref{Key4}
that there is an injection of algebras
$$\bar\zeta_0: R^{\widehat{M}}\to\bigoplus_{(H,\chi)\in\tilde G_0}S_H.$$
We proceed to construct $\bar\zeta_0$ in several steps,
which is the composition of several homomorphisms (from the first to the
fifth homomorphisms below).
Let us start with some preparations.

Fix a pair $(H, W_H)$ as above. There is a set 
$\{\lambda_{i}^{H}\}_i$ of characters of $G$ consisting of all characters satisfied that $(H, \lambda_{i}^{H}) \sim (H, \chi_0)$. 
It means that $\lambda_{i}^{H}(H) = \{1 \}$.
For any $\lambda^{H}_i$, it induces a character 
of $G/H$. In the meantime, by the group homomorphism $G \rightarrow G/H $, 
any character of $G/H$ 
gives a character of $G$. Furthermore, it is obvious that there is a 
bijection between the characters in $\{\lambda_{i}^{H}\}_i$ and the 
characters of $G/H$. Hence, we also use 
$\lambda_{i}^{H}$ to represents the induced 
character of $G/H$ for simplicity.

Since for any pair $(H, W_H)$, $H$ is also viewed as a group consisting of 
diagonal matrices in $\mathrm{SL}(V)$, 
the vector space $W_H$ has set of a basis,
denoted by $\mathfrak{S}_H$, which is a subset 
of $\{ E_i\}_i$. 
Then there is a $G$-subrepresentation 
$W'_{H} \subseteq V$, which is spanned by 
vectors in 
$\{ E_i\}_i \big\backslash \mathfrak{S}_H$. Hence,  we have  
$W'_{H} \cong V/ W_H$ and 
$
V \cong W_H \oplus W'_H 
$
as $G$-representations.

Now, for any two characters 
$\lambda^{H}_i, \lambda^{H}_j$, by Lemma \ref{Equi}, we have 
\begin{align}
\mathrm{Hom}_{S}(M_{\lambda^{H}_i}, M_{\lambda^{H}_j}) 
& \cong \left(  V_{(\lambda^{H}_i)^{-1} \lambda^{H}_j}\otimes k[W_H ]\otimes k[W'_H]^H\right)^{G/H} \label{Lab11}
\end{align}
as $S$-modules. 
Recall that $R_H = k[W_H]$
and $S_H = R_{H}^{G/H}$. 
Let $M^H_{\lambda^{H}_i} := (V_{\lambda^{H}_i} \otimes R_H)^{G/H}$ 
and $M^H_{\lambda^{H}_j} := (V_{\lambda^{H}_j} \otimes R_H)^{G/H}$, then
by Lemma \ref{Equi} again, we have
\begin{align}\label{lab22}
\mathrm{Hom}_{S_H}(M^H_{\lambda^{H}_i}, M^H_{\lambda^{H}_j}) 
\cong (V_{(\lambda^{H}_i)^{-1} \lambda^{H}_j} \otimes R_H)^{G/H}
\end{align}
as $S_H$-modules.

Consider the following natural $G/H$-equivariant surjection 
$$ 
V_{(\lambda^{H}_i)^{-1} \lambda^{H}_j}\otimes R_H \otimes k[W'_H]^H 
\twoheadrightarrow 
V_{(\lambda^{H}_i)^{-1} \lambda^{H}_j} \otimes R_H \otimes k 
\cong V_{(\lambda^{H}_i)^{-1} \lambda^{H}_j} \otimes R_H,
$$ 
which is induced by the canonical surjection $k[W'_H]^H \twoheadrightarrow k$. 
Combining with (\ref{Lab11}) and (\ref{lab22}), 
we obtain the following surjection of algebras  
\begin{align*}
\tau_{\lambda^{H}_i, \lambda^{H}_j}^{H}: 
& \mathrm{Hom}_{S}(M_{\lambda^{H}_i}, M_{\lambda^{H}_j}) \cong 
\left( V_{(\lambda^{H}_i)^{-1} \lambda^{H}_j}\otimes 
R_H \otimes k[W'_H]^H\right)^{G/H} \\ 
& 
\twoheadrightarrow (V_{(\lambda^{H}_i)^{-1} \lambda^{H}_j} 
\otimes R_H)^{G/H} \cong 
\mathrm{Hom}_{S_H}(M^H_{\lambda^{H}_i}, M^H_{\lambda^{H}_j}).
\end{align*} 
Now let $\widehat{M}^H := \bigoplus_{i} M_{\lambda^{H}_i}$, we have
the following algebra surjection: 
\begin{align}\label{TauH}
\tau^H := \bigoplus_{i, j} \tau_{\lambda^{H}_i, \lambda^{H}_j}^H : &
\bigoplus_{i, j} \mathrm{Hom}_{S}\big( M_{\lambda^{H}_i}, M_{\lambda^{H}_j} \big)
\nonumber\\
& \twoheadrightarrow \bigoplus_{i, j} \mathrm{Hom}_{S}\big( M^{H}_{\lambda^{H}_i}, 
M_{\lambda^{H}_j} \big) \cong \mathrm{Hom}_{S_H}(\widehat{M}^H, \widehat{M}^H).
\end{align}

Now let $M'$ be a Cohen-Macaulay $S$-module such that $M$ is a direct summand of $M'$, i.e., 
$M' \cong M \oplus N$ as $S$-modules for some 
$S$-module $N$. 
Then there is a natural $S$-module injection
$M \hookrightarrow M'$ and a natural $S$-module surjection $M' \twoheadrightarrow M$. 
It follows that there is an $S$-module surjection  
$$
\mathrm{Hom}_{S}(M', M') \twoheadrightarrow \mathrm{Hom}_{S}(M, M).
$$ 
Now, let $M'$ be $\widehat{M}$ and $M$ be $\bigoplus_i  M_{\lambda^{H}_i}$. 
Then there is an $S$-module surjection 
\begin{align}\label{PrHHH}
Pr_H:
\mathrm{Hom}_{S}(\widehat{M}, \widehat{M}) \twoheadrightarrow \mathrm{Hom}_{S}
\big(\bigoplus_i  M_{\lambda^{H}_i}, \bigoplus_i  M_{\lambda^{H}_i} \big) \cong \bigoplus_{i, j} 
\mathrm{Hom}_{S}\big( M_{\lambda^{H}_i}, M_{\lambda^{H}_j} \big).
\end{align} 

With these preparations, in the following, we introduce
five algebra homomorphisms, one is based on the other,
and finally reach the homomorphism given in Theorem \ref{Key4}.

\begin{definition}[The first homomorphism]
Let   
\begin{align}\label{FirstM}
\zeta_{H}:
\Lambda \cong \mathrm{Hom}_{S}(\widehat{M}, \widehat{M})  \twoheadrightarrow 
\mathrm{Hom}_{S_H}(\widehat{M}^H, \widehat{M}^H)
\end{align}
be the composition of $\tau^H \circ Pr_{H}$.
\end{definition}

\begin{lemma}\label{Keyy11}
$\zeta_{H}$ is a surjection of algebras.
\end{lemma}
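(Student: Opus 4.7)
The plan is to realize $\zeta_H$ as a natural algebra homomorphism between skew group algebras, from which both multiplicativity and surjectivity become transparent. Since $Pr_H$ is a surjective $S$-module map by construction and $\tau^H$ was already noted to be surjective, the surjectivity of $\zeta_H$ on the level of $S$-modules is automatic; the only real content is multiplicativity.

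By applying Auslander's theorem (as in Example~\ref{keyexample}) to the $G/H$-action on $W_H$, the target is identified as $\mathrm{End}_{S_H}(\widehat{M}^H) \cong (G/H) \sharp R_H$, paralleling $\Lambda \cong G \sharp R$. Using the $G$-stable splitting $V \cong W_H \oplus W'_H$, we may write $R = R_H \otimes k[W'_H]$ as a $G$-algebra. The evaluation-at-origin map $\mathrm{ev}_0 \colon k[W'_H] \to k$ is a $G$-equivariant algebra homomorphism, since linear actions preserve constant terms; hence $\pi := \mathrm{id}_{R_H} \otimes \mathrm{ev}_0 \colon R \twoheadrightarrow R_H$ is a $G$-equivariant surjective algebra homomorphism (with $H$ acting trivially on $R_H$). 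Consequently, there is a surjective algebra homomorphism
\[
\Phi \colon G \sharp R \twoheadrightarrow (G/H) \sharp R_H, \qquad g \otimes f \longmapsto \bar g \otimes \pi(f).
\]

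What remains is to verify $\Phi = \zeta_H$ under these isomorphisms. Unwinding \eqref{Lab11}, the restriction of $\zeta_H$ to each block $e_{\lambda_i^H} \Lambda e_{\lambda_j^H}$ is, by definition of $\tau^H$, induced by the canonical surjection $k[W'_H]^H \twoheadrightarrow k$, i.e.\ the restriction of $\mathrm{ev}_0$ to the $H$-invariants of the relevant isotypic component, which coincides with the corresponding block of $\Phi$. The main obstacle is confirming that the ``cross-block'' contributions to $\zeta_H(xy)$, coming from summands $x_{\lambda_i, \chi} y_{\chi, \lambda_j}$ with $\chi \notin \{\lambda_k^H\}$, are killed by $\tau^H$; this holds because such a product equals $e_{\lambda_i^H}(1 \otimes f_1 f_2) e_{\lambda_j^H}$ where the monomial $f_1$ has $H$-character $\chi|_H \neq 1$, forcing its $k[W'_H]$-factor to be non-constant, so $\pi(f_1) = 0$ and hence $\pi(f_1 f_2) = 0$. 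Once multiplicativity is established, the surjectivity asserted in the lemma follows at once.
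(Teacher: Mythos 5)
Your argument is correct and takes a genuinely different, more conceptual route than the paper's. The paper proves multiplicativity of $\zeta_H$ by a direct case analysis on monomials $\alpha\in\mathrm{Hom}_S(M_{\chi_2},M_{\chi_3})$, $\beta\in\mathrm{Hom}_S(M_{\chi_1},M_{\chi_2})$, splitting into the cases where $\chi_1,\chi_2,\chi_3$ all lie in $\{\lambda_i^H\}$, where $\chi_1$ or $\chi_3$ does not, and where only $\chi_2$ does not (the last being the delicate case). You instead exhibit $\zeta_H$ as the canonical quotient map $G\sharp R\twoheadrightarrow (G/H)\sharp R_H$ induced by $G\to G/H$ and the $G$-equivariant algebra surjection $\pi=\mathrm{id}_{R_H}\otimes\mathrm{ev}_0:R\cong R_H\otimes k[W'_H]\to R_H$, which is manifestly a surjective algebra homomorphism once one notes that $\pi$ is $G$-equivariant (linear actions preserve constant terms) and that $H$ acts trivially on $R_H$. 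This transfers the burden from verifying multiplicativity case-by-case to verifying a linear identity $\Phi=\zeta_H$, which can be checked on a $k$-basis. Your paragraph on the ``cross-block'' contributions to $\zeta_H(xy)$ slightly muddies the logic: if you are showing $\Phi=\zeta_H$ as linear maps, multiplicativity of $\zeta_H$ is automatic and needs no separate check. What you actually need there is the observation that $\Phi(e_\chi)=0$ for every $\chi\notin\{\lambda_i^H\}$ (because $\sum_{h\in H}\chi(h)=0$ when $\chi|_H$ is nontrivial), which makes $\Phi$ vanish on all blocks that $Pr_H$ kills; your $H$-character argument on the $k[W'_H]$-factor is the monomial-level version of this fact and does the same job. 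Net assessment: the skew-group-algebra formulation is cleaner and more structural — it makes $\Phi$'s multiplicativity tautological and reduces the whole lemma to identifying $\Phi$ with $\zeta_H$ — at the modest cost of having to unwind the identifications of Lemma \ref{Equi} and Auslander's theorem, which the paper's hands-on computation avoids by working directly in $\mathrm{Hom}_S$-spaces.
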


The proof is quite long, and is postponed to Appendix \ref{App:A}.

\begin{remark}Similarly to the definition of $\tau^H$ given by
\eqref{TauH}, we give an algebra injection,
which we denote by $\rho^H$, as follows.
Start with the natural $G/H$-equivariant injection 
$$
V_{(\lambda^{H}_i)^{-1} \lambda^{H}_j} \otimes R_H 
\cong V_{(\lambda^{H}_i)^{-1} \lambda^{H}_j} 
\otimes R_H \otimes k
 \hookrightarrow V_{(\lambda^{H}_i)^{-1} \lambda^{H}_j}\otimes R_H \otimes k[W'_H]^H,
$$ which is induced by the 
canonical injection $k \hookrightarrow k[W'_H]^H$. 
By (\ref{Lab11}) and (\ref{lab22}), we obtain the following injection of algebras 
\begin{align}\label{RHO08538} 
\rho_{\lambda^{H}_i, \lambda^{H}_j}^{H}: & \mathrm{Hom}_{S_H}(M^H_{\lambda^{H}_i}, 
M^H_{\lambda^{H}_j}) \cong 
(V_{(\lambda^{H}_i)^{-1} \lambda^{H}_j} \otimes R_H)^{G/H}\nonumber \\
& \hookrightarrow 
\left(  V_{(\lambda^{H}_i)^{-1} \lambda^{H}_j}\otimes R_H \otimes k[W'_H]^H\right)^{G/H} 
\cong \mathrm{Hom}_{S}(M_{\lambda^{H}_i}, M_{\lambda^{H}_j}). 
\end{align}
Let $\widehat{M}^H= \bigoplus_{i} M_{\lambda^{H}_i}$ as before, we have an algebra injection:
\begin{align}\label{RhoH}
\rho^H := \bigoplus_{i, j} \rho_{\lambda^{H}_i, \lambda^{H}_j}^H : &
\mathrm{Hom}_{S_H}(\widehat{M}^H, \widehat{M}^H) \cong \bigoplus_{i, j} 
\mathrm{Hom}_{S}\big( M^{H}_{\lambda^{H}_i}, M_{\lambda^{H}_j} \big)
\nonumber\\
& \hookrightarrow \bigoplus_{i, j} \mathrm{Hom}_{S}
\big( M_{\lambda^{H}_i}, M_{\lambda^{H}_j} \big).
\end{align}
\end{remark}

In the following, we fix a character $\chi$ of $G$  such that $\chi(H) \neq \{1 \}$. Let $\{\lambda_{i}^{(H, \chi)}\}_i$ be the 
set of characters of $G$ consisting of all characters 
such that $(H, \chi) \sim (H, \lambda_{i}^{(H, \chi)})$. 
On the one hand, for any $\lambda_{i}^{(H, \chi)}$,  
$\lambda_{i}^{(H, \chi)} \chi^{-1}$ is in $\{\lambda_{i}^{H}\}_i$. On the other hand, 
for any $\lambda_{i}^{H}$,  
$\lambda_{i}^{H} \chi$ is in $\{\lambda_{i}^{(H, \chi)}\}_i$. Thus
it is obvious that 
there is a bijection between characters in 
$\{\lambda_{i}^{(H, \chi)}\}_i$ 
and characters in $\{\lambda_{i}^{H}\}_i$. 
In the meantime, by Lemma \ref{Equi} we have 
\begin{align}
\mathrm{Hom}_{S}(M_{\lambda^{H}_i},   M_{\lambda^{H}_j} ) & \cong 
\mathrm{Hom}_{kG}\big((\lambda^{H}_j)^{-1} \lambda^{H}_i,  R \big) \nonumber \\
& \cong \mathrm{Hom}_{kG}\big((\lambda^{H}_j \chi)^{-1} \lambda^{H}_i \chi,  R \big) \nonumber \\
&  \cong \mathrm{Hom}_{S}(M_{\lambda^{(H, \chi)}_i},   M_{\lambda^{(H, \chi)}_j} ) \label{Lab333}
\end{align}
as $S$-modules, where $\lambda^{(H, \chi)}_i = 
\lambda^{H}_i \chi$ and $\lambda^{(H, \chi)}_j = \lambda^{H}_j \chi$.
Furthermore, there is a commutative diagram of $S^e$-module 
homomorphisms:
$$  
\xymatrix{
\mathrm{Hom}_{S}(M_{\lambda^{H}_i},   M_{\lambda^{H}_j} ) 
\times \mathrm{Hom}_{S}(M_{\lambda^{H}_j},   M_{\lambda^{H}_r} ) 
\ar[d]^{\wr}  \ar[r] & \mathrm{Hom}_{S}(M_{\lambda^{H}_i},   M_{\lambda^{H}_r}) \ar[d]^{\wr} 
\\
\mathrm{Hom}_{S}(M_{\lambda^{(H, \chi)}_i},   
M_{\lambda^{(H, \chi)}_j} ) \times \mathrm{Hom}_{S}(M_{\lambda^{(H, \chi)}_j},   
M_{\lambda^{(H, \chi)}_r} ) \ar[r]  & \mathrm{Hom}_{S}(M_{\lambda^{(H, \chi)}_j},   M_{\lambda^{(H, \chi)}_r} ),  
}
$$ 
where the vertical morphisms are given by  the
isomorphisms in (\ref{Lab333}) and the horizontal morphisms 
are the compositions of homomorphisms. 
Hence, we get that
\begin{equation}\label{eq:iso1}
 \mathrm{Hom}_{S}\big(\bigoplus_i  M_{\lambda^{(H, \chi)}_i}, 
 \bigoplus_i  M_{\lambda^{(H, \chi)}_i} \big) \cong 
 \mathrm{Hom}_{S}\big(\bigoplus_i  M_{\lambda^{H}_i}, \bigoplus_i  M_{\lambda^{H}_i} \big), 
\end{equation}
as algebra. Denote this homomorphism by $\Delta_{(H, \chi)}$.

Composing the isomorphism in  
\eqref{eq:iso1} with the canonical projection: 
$$
\mathrm{Hom}_{S}(\widehat{M}, \widehat{M}) \twoheadrightarrow \mathrm{Hom}_{S}
\big(\bigoplus_i  M_{\lambda^{(H, \chi)}_i}, \bigoplus_i  M_{\lambda^{(H, \chi)}_i} \big), 
$$ 
which is denoted by $Pr_{(H, \chi)}$, we obtain the
$S$-module surjection 
$$
\mathrm{Hom}_{S}(\widehat{M}, \widehat{M}) \twoheadrightarrow \mathrm{Hom}_{S}
\big(\bigoplus_i  M_{\lambda^{H}_i}, \bigoplus_i  M_{\lambda^{H}_i} \big),
$$
which is denoted by $\Pi_{(H, \chi)}$.

\begin{definition}[The second homomorphism]
Let
\begin{align}\label{Seccc}
\zeta^{\chi}_{H}:
\Lambda \cong \mathrm{Hom}_{S}(\widehat{M}, \widehat{M})
\to 
\mathrm{Hom}_{S_H}(\widehat{M}^H, \widehat{M}^H)
\end{align}
be $\tau^H \circ \Pi_{(H, \chi)}$.
\end{definition}

Note that $\zeta^{\chi}_{H} = \zeta_{H}$ 
if $\chi$ is the trivial character of $G$.
By Auslander's theorem for the representation $W_{H}$ of $G/H$, 
we have 
$$
\mathrm{Hom}_{S_H}(\widehat{M}^H, \widehat{M}^H) \cong \mathrm{Hom}_{S_H}\big(\bigoplus\limits_i  M^{H}_{\lambda^{H}_i}, \bigoplus\limits_i  M^{H}_{\lambda^{H}_i} \big) \cong  (G/H) \sharp R_{H} 
$$
as algebras. We thus thus re-write the above
homomorphism $\zeta_{H}$ as
$\zeta_{H}: \Lambda \to (G/H) \sharp R_{H}$.

\begin{lemma}\label{Keyy12}
$\zeta^{ \chi}_{H}$ is a surjection of algebras. 
\end{lemma}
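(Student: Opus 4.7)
The plan is to reduce Lemma \ref{Keyy12} to the already established Lemma \ref{Keyy11} via a character-twist algebra automorphism of $\Lambda$. Since $G$ is abelian, I would define $\sigma_\chi \colon \Lambda \to \Lambda$ on the skew-group presentation $\Lambda \cong G \sharp R$ by $\sigma_\chi(g \otimes f) := \chi(g)\,(g \otimes f)$. A direct computation using the skew product rule $(g \otimes f)(h \otimes f') = gh \otimes (h^{-1}\cdot f)f'$ together with the multiplicativity $\chi(gh)=\chi(g)\chi(h)$ shows that $\sigma_\chi$ is an algebra automorphism, with inverse $\sigma_{\chi^{-1}}$. From the formula $e_\mu = \tfrac{1}{|G|}\sum_g \mu(g)(g\otimes 1)$ one reads off $\sigma_\chi(e_\mu) = e_{\mu\chi}$, so $\sigma_\chi$ permutes the primitive idempotents by the character shift $\mu \mapsto \mu\chi$.

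Setting $p_0 := \sum_i e_{\lambda^H_i}$ and $p_\chi := \sum_i e_{\lambda^{(H,\chi)}_i}$, we have $\sigma_\chi(p_0)=p_\chi$, so $\sigma_{\chi^{-1}}$ restricts to an algebra isomorphism $p_\chi\Lambda p_\chi \xrightarrow{\sim} p_0\Lambda p_0$. I would next verify that this restricted isomorphism coincides with the algebra isomorphism $\Delta_{(H,\chi)}$ of \eqref{eq:iso1}. Unwinding the definition of $\Delta_{(H,\chi)}$ through the identifications in \eqref{Lab333}, the map is nothing other than the renaming $(\lambda^H_j)^{-1}\lambda^H_i = (\lambda^H_j\chi)^{-1}(\lambda^H_i\chi)$ on the $\mathrm{Hom}_{kG}$ spaces, which is precisely what the idempotent shift induced by $\sigma_{\chi^{-1}}$ implements on $p_\chi\Lambda p_\chi$.

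With that identification in hand, the conclusion is immediate. Since $\sigma_{\chi^{-1}}$ is an algebra automorphism intertwining the idempotents, one checks that
$$
Pr_H \circ \sigma_{\chi^{-1}}(x) \;=\; p_0\,\sigma_{\chi^{-1}}(x)\,p_0 \;=\; \sigma_{\chi^{-1}}(p_\chi x p_\chi) \;=\; \sigma_{\chi^{-1}} \circ Pr_{(H,\chi)}(x),
$$
and therefore
$$
\zeta^\chi_H \;=\; \tau^H \circ \Delta_{(H,\chi)} \circ Pr_{(H,\chi)} \;=\; \tau^H \circ Pr_H \circ \sigma_{\chi^{-1}} \;=\; \zeta_H \circ \sigma_{\chi^{-1}}.
$$
Since $\zeta_H$ is an algebra surjection by Lemma \ref{Keyy11} and $\sigma_{\chi^{-1}}$ is an algebra automorphism of $\Lambda$, the composition $\zeta^\chi_H$ is an algebra surjection, as desired.

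The step I expect to be the main obstacle is the identification of $\sigma_{\chi^{-1}}|_{p_\chi\Lambda p_\chi}$ with $\Delta_{(H,\chi)}$: although both maps are canonically induced by the same character shift $\mu \mapsto \mu\chi$, translating between the skew-group-algebra description in which $\sigma_\chi$ is defined and the $\mathrm{Hom}_{kG}$-description used to construct $\Delta_{(H,\chi)}$ in \eqref{Lab333} requires careful bookkeeping of the $G$-action conventions on $R$ and on the dual side.
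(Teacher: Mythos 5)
Your proposal is correct, but it takes a genuinely different route from the paper. The paper's intended proof (it just says ``the proof is the same as Lemma \ref{Keyy11}'') is to replicate the direct case-by-case check of multiplicativity on monomials, splitting according to whether the relevant characters $\chi_1,\chi_2,\chi_3$ lie in $\{\lambda^{(H,\chi)}_i\}_i$, exactly as in Appendix \ref{App:A} but with $\Pi_{(H,\chi)}$ in place of $Pr_H$. You instead construct the character-twist automorphism $\sigma_\chi(g\otimes f)=\chi(g)(g\otimes f)$ of $\Lambda\cong G\sharp R$, observe $\sigma_\chi(e_\mu)=e_{\mu\chi}$, and reduce the whole lemma to the identity $\zeta^\chi_H=\zeta_H\circ\sigma_{\chi^{-1}}$, which makes multiplicativity and surjectivity automatic consequences of Lemma \ref{Keyy11}. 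This is cleaner and more conceptual, and it also explains structurally why the statement ``should'' reduce to Keyy11 rather than requiring the reader to re-run the whole calculation. The one step you flag as delicate --- that $\sigma_{\chi^{-1}}$ restricted to $p_\chi\Lambda p_\chi$ coincides with $\Delta_{(H,\chi)}$ --- does in fact go through: on generators $e_{\lambda^{(H,\chi)}_j}(1\otimes f)e_{\lambda^{(H,\chi)}_i}$ (with $\chi_f=(\lambda^{H}_j)^{-1}\lambda^{H}_i$) the automorphism fixes $1\otimes f$ and shifts the two idempotents, yielding $e_{\lambda^H_j}(1\otimes f)e_{\lambda^H_i}$, which under the identifications of \eqref{Che222} and \eqref{Lab333} is exactly the same element of $\mathrm{Hom}_{kG}\big(V_{(\lambda^H_j)^{-1}\lambda^H_i},R\big)$ that $\Delta_{(H,\chi)}$ produces. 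What the paper's route buys is that it needs no new constructions; what yours buys is brevity and a reusable tool (the family $\sigma_\chi$ realizes the $G^\vee$-action on $\Lambda$ that underlies the bijection $\{\lambda^H_i\}\leftrightarrow\{\lambda^{(H,\chi)}_i\}$ used throughout \S\ref{Dec9822415684}).
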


The proof is the same as Lemma \ref{Keyy11},
and is left to the interested readers. 
Now recall that $\Lambda_{\mathrm{con}}=\Lambda/\Lambda e\Lambda$. We have
the following.

\begin{lemma}\label{Key2}
For any pair $(H, \chi)$ such that
$\chi(H) \neq \{1\}$, $\zeta^{\chi}_{H}$ induces a well-defined surjection of algebras 
($\bm{the\,\, third\,\, homomorphism}$)
\begin{equation}\label{eq:tildezetachiH}
\tilde{\zeta}^{\chi}_{H} : \Lambda_{\mathrm{con}} \twoheadrightarrow (G/H) \sharp R_{H}.
\end{equation}
\end{lemma}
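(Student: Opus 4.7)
The plan is to show that the algebra homomorphism $\zeta^{\chi}_{H}:\Lambda\twoheadrightarrow (G/H)\sharp R_{H}$ from Lemma \ref{Keyy12} vanishes on the two-sided ideal $\Lambda e\Lambda$; once this is established, $\tilde{\zeta}^{\chi}_{H}$ is well-defined on the quotient $\Lambda_{\mathrm{con}}=\Lambda/\Lambda e\Lambda$, and its surjectivity is inherited directly from Lemma \ref{Keyy12}. The proof proposal therefore reduces to a single computation: verifying $\zeta^{\chi}_{H}(\Lambda e\Lambda)=0$ under the hypothesis $\chi(H)\neq\{1\}$.

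First, I would describe elements of $\Lambda e\Lambda$ in terms of the matrix presentation $\Lambda\cong\mathrm{End}_{S}(\widehat{M})$. A generator $aeb$ is an $S$-homomorphism $\widehat{M}\to M_{\chi_{0}}\to\widehat{M}$, so each of its block entries in $\mathrm{Hom}_{S}(M_{\chi'},M_{\chi''})$ is a composition $M_{\chi'}\to S\to M_{\chi''}$. By Lemma \ref{Equi} and \eqref{Che222}, such a composition is represented, as a polynomial in $R$, by a product $f_{1}\cdot f_{2}$ where $f_{1}\in(V_{(\chi')^{-1}}\otimes R)^{G}$ and $f_{2}\in(V_{\chi''}\otimes R)^{G}$. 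In particular, $f_{1}$ transforms under $G$ with character $\chi'$ and $f_{2}$ with character $\chi''^{\,-1}$ (or the transposed convention), and these transformation laws are preserved verbatim when restricting the $G$-action to $H$.

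Next, I would track the image of such an entry under $\zeta^{\chi}_{H}=\tau^{H}\circ\Pi_{(H,\chi)}$. The projection $Pr_{(H,\chi)}$ restricts to the block indexed by $\{\lambda_{i}^{(H,\chi)}\}_{i}$, and $\Delta_{(H,\chi)}$ then reindexes these entries to the $\{\lambda_{i}^{H}\}_{i}$ block via the canonical isomorphism \eqref{Lab333}; since this relabeling is the identity on the underlying polynomial representatives, each entry of $\Pi_{(H,\chi)}(aeb)$ remains a product $f_{1}\cdot f_{2}$. The decisive point is that the indexing characters $\lambda_{i}^{(H,\chi)}$ all satisfy $\lambda_{i}^{(H,\chi)}|_{H}=\chi|_{H}\neq 1$, so both $f_{1}$ and $f_{2}$ transform under $H$ by non-trivial characters ($\chi|_{H}^{\pm1}$). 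Under the decomposition $R=k[W_{H}]\otimes k[W'_{H}]$, with $H$ acting trivially on $k[W_{H}]$, this forces each $f_{k}$ to have zero constant term in the $W'_{H}$-variables, i.e.\ $f_{k}\in(W'_{H})\cdot R$. Consequently $f_{1}f_{2}\in(W'_{H})^{2}\cdot R$.

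Finally, $\tau^{H}$ is induced by the $G/H$-equivariant algebra surjection $R\twoheadrightarrow R_{H}$ that sets the $W'_{H}$-variables to zero, so every polynomial in $(W'_{H})\cdot R$ (a fortiori $(W'_{H})^{2}\cdot R$) is annihilated. This yields $\zeta^{\chi}_{H}(aeb)=0$, hence $\zeta^{\chi}_{H}(\Lambda e\Lambda)=0$, and the factorization $\tilde{\zeta}^{\chi}_{H}:\Lambda_{\mathrm{con}}\twoheadrightarrow(G/H)\sharp R_{H}$ is obtained. I expect the main technical nuisance to be purely bookkeeping: making sure the identifications via Lemma \ref{Equi}, the projection $Pr_{(H,\chi)}$ and the reindexing $\Delta_{(H,\chi)}$ all commute with taking polynomial representatives, so that the two factors $f_{1},f_{2}$ really do retain their non-trivial $H$-weights after passing through $\Pi_{(H,\chi)}$. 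Once that is in place the $H$-weight argument above is immediate.
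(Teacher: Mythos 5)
Your proof is correct, and it starts from the same reduction as the paper: use the surjectivity and multiplicativity of $\zeta^{\chi}_H$ from Lemma \ref{Keyy12} and show that $\zeta^{\chi}_H(\Lambda e \Lambda)=0$. Where you diverge is in \emph{how} you show this vanishing. The paper observes that any $\alpha\in\mathrm{Hom}_S(M_{\chi_0},M_{\chi_2})$ and $\beta\in\mathrm{Hom}_S(M_{\chi_1},M_{\chi_0})$ are already annihilated \emph{individually} by $Pr_{(H,\chi)}$, simply because $\chi(H)\neq\{1\}$ forces $\chi_0\notin\{\lambda_i^{(H,\chi)}\}_i$; then $\zeta^{\chi}_H(\alpha\beta)=\zeta^{\chi}_H(\alpha)\,\zeta^{\chi}_H(\beta)=0$ follows at once from multiplicativity, with no need to look at polynomial representatives at all. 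You, by contrast, do \emph{not} exploit the fact that the factors $\alpha,\beta$ are already killed; instead you follow the surviving block entries of $aeb$ (those with both indices in $\{\lambda_i^{(H,\chi)}\}_i$) through $\Pi_{(H,\chi)}$ and $\tau^H$, and kill them by an $H$-weight argument: both factors $f_1,f_2$ of the entry carry nontrivial $H$-weight $\chi|_H^{\pm1}$, hence each lies in the $W'_H$-ideal of $R$, so the product is annihilated by $\tau^H$. This is exactly the mechanism used inside the proof of Lemma \ref{Keyy11} (the subcase ``$\chi_2\notin\{\lambda_i^H\}$, $\chi_1,\chi_3\in\{\lambda_i^H\}$''), so in effect you re-derive the relevant piece of the multiplicativity proof rather than cite it. Both routes are valid; the paper's is more economical (one line, no weight bookkeeping), while yours is more self-contained and makes the geometric reason for the vanishing visible. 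One small point to tighten when writing this up: before applying $\tau^H$ you implicitly need the surviving entry to lie in $k[W_H]\otimes k[W'_H]^H$ (so that the augmentation on $k[W'_H]^H$ is the right map to apply), which holds automatically because after $\Delta_{(H,\chi)}$ the entry lives in $\mathrm{Hom}_S(M_{\lambda_i^H},M_{\lambda_j^H})$; it is worth saying this explicitly so the reader sees that membership in $(W'_H)\cdot R$ really does mean membership in $k[W_H]\otimes (k[W'_H]^H)^+$ for this element.
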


\begin{proof}
By Lemma \ref{Keyy12}, to prove this lemma, we only need 
to show that $\zeta^{\chi}_{H} (\Lambda e \Lambda) = 0$. 
Since the direct summand 
$M_{\chi_0}$ corresponds to the idempotent $e$, 
to prove $\zeta^{\chi}_{H} (\Lambda e \Lambda) = 0$
it suffices to show that for any characters $\chi_1$, $\chi_2$ of $G$ and 
$\alpha \in \mathrm{Hom}_{S}(M_{\chi_0}, M_{\chi_2})$ and $\beta \in \mathrm{Hom}_{S}(M_{\chi_1}, M_{\chi_0})$, 
$\zeta^{\chi}_{H}(\alpha \beta) = 0$. 

Since $\chi(H) \neq \{ 1 \}$, $\chi|_{H} \neq \chi_{0}|_{H}$. Thus, 
$\chi_{0}$ is not in $\{\lambda^{(H, \chi)}_i \}_{i}$. 
By  
$\zeta^{\chi}_{H} = \tau^H \circ \Pi_{(H, \chi)} = \tau^H \circ \Delta_{(H, \chi)} \circ Pr_{(H, \chi)}$, 
we know that 
$\zeta^{\chi}_{H}(\alpha) = \zeta^{\chi}_{H}(\beta) = 0$ 
since $Pr_{(H, \chi)}(\alpha) = Pr_{(H, \chi)}(\beta) = 0$. 
Thus, by Lemma \ref{Keyy11}, we get that  
$$
\zeta^{\chi}_{H}(\alpha \beta) = \zeta^{\chi}_{H}(\alpha) \zeta^{\chi}_{H}(\beta) = 0.
$$ 
This completes the proof.
\end{proof}

Note that $\zeta^{\chi}_H$ is determined by the set 
$\{\lambda^{(H, \chi)}_i \}_{i}$ of characters and the map 
$\tau^{H}$. It follows that $\zeta^{\chi}_H$ 
is determined by the pair $(H, \chi)$.   
Then, there is bijection between the algebra 
homomorphisms in $\{ \zeta^{\chi}_H\}$ and the
classes in $\tilde{G}$. 
We get that
$\tilde{\zeta}^{\chi}_H = \tilde{\zeta}^{\chi'}_H$
if and only if $(H, \chi) \sim (H, \chi')$ in $\tilde{G}_0$.

\begin{definition}[The fourth homomophism]
Let
$$Z(\tilde{\zeta}^{\chi}_{H}):Z (\Lambda_{\mathrm{con}}) 
\rightarrow Z\big((G/H) \sharp R_{H}\big)$$
be the homomorphism induced from \eqref{eq:tildezetachiH}
by taking the centers of both sides' algebras.
\end{definition}
 
\begin{lemma}\label{Key3}
$Z(\tilde{\zeta}^{\chi}_{H})$ is an algebra homomorphism.
\end{lemma}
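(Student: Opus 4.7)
The plan is short and essentially formal: the content of the lemma is simply that the restriction of $\tilde{\zeta}^{\chi}_{H}$ to the center is well-defined (i.e.\ lands in the center of the target), because once well-definedness is established, the homomorphism property is inherited automatically from $\tilde{\zeta}^{\chi}_{H}$.

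First I would invoke Lemma \ref{Key2}, which guarantees that $\tilde{\zeta}^{\chi}_{H}: \Lambda_{\mathrm{con}} \twoheadrightarrow (G/H) \sharp R_H$ is a \emph{surjective} algebra homomorphism. The surjectivity is the key input: given any $z \in Z(\Lambda_{\mathrm{con}})$ and any element $b \in (G/H) \sharp R_H$, write $b = \tilde{\zeta}^{\chi}_{H}(a)$ for some $a \in \Lambda_{\mathrm{con}}$. Then
\[
\tilde{\zeta}^{\chi}_{H}(z)\, b
\;=\; \tilde{\zeta}^{\chi}_{H}(z)\,\tilde{\zeta}^{\chi}_{H}(a)
\;=\; \tilde{\zeta}^{\chi}_{H}(za)
\;=\; \tilde{\zeta}^{\chi}_{H}(az)
\;=\; \tilde{\zeta}^{\chi}_{H}(a)\,\tilde{\zeta}^{\chi}_{H}(z)
\;=\; b\, \tilde{\zeta}^{\chi}_{H}(z),
\]
so $\tilde{\zeta}^{\chi}_{H}(z) \in Z\bigl((G/H) \sharp R_H\bigr)$. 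Hence the restriction $Z(\tilde{\zeta}^{\chi}_{H})$ is well-defined as a map $Z(\Lambda_{\mathrm{con}}) \to Z\bigl((G/H) \sharp R_H\bigr)$.

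Next, since $Z(\tilde{\zeta}^{\chi}_{H})$ is literally the restriction of the algebra homomorphism $\tilde{\zeta}^{\chi}_{H}$ to the subalgebra $Z(\Lambda_{\mathrm{con}}) \subseteq \Lambda_{\mathrm{con}}$, it automatically preserves addition, multiplication, and the unit (each of these properties is verified on elements of $Z(\Lambda_{\mathrm{con}})$ by applying the corresponding property of $\tilde{\zeta}^{\chi}_{H}$). Therefore $Z(\tilde{\zeta}^{\chi}_{H})$ is an algebra homomorphism, as desired.

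There is essentially no obstacle here: the only thing one must not forget is that surjectivity of $\tilde{\zeta}^{\chi}_{H}$ is indispensable, since a general (non-surjective) algebra map need not send central elements to central elements; this is precisely why the authors first established surjectivity in Lemma \ref{Key2} before stating Lemma \ref{Key3}.
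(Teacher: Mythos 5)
Your proposal is correct and follows exactly the same route as the paper's proof: use the surjectivity of $\tilde{\zeta}^{\chi}_{H}$ (from Lemma \ref{Key2}) to write an arbitrary element of $(G/H)\sharp R_H$ as an image, then transport the commutation relation to conclude that central elements map to central elements, after which the homomorphism property of the restriction is automatic. No discrepancies to report.
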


\begin{proof}
To prove this lemma, it is enough to prove that 
$$
\tilde{\zeta}^{\chi}_{H}\big( Z (\Lambda_{\mathrm{con}}) \big) 
\subseteq Z\big((G/H) \sharp R_{H}\big). 
$$
Let $h \in (G/H) \sharp R_{H}$ and $f \in  Z (\Lambda_{\mathrm{con}})$. 
Since $\zeta^{\chi}_{H}$ is a surjection, $\tilde{\zeta}^{\chi}_{H}$ is also a
surjection. Then there exists 
$\tilde{h} \in \Lambda_{\mathrm{con}}$  such that $\tilde{\zeta}^{\chi}_{H}(\tilde{h}) = h$. 
Thus, we have 
\begin{align*}
\tilde{\zeta}^{\chi}_{H}(f)h & = \tilde{\zeta}^{\chi}_{H}(f) \tilde{\zeta}^{\chi}_{H}(\tilde{h}) 
 =  \tilde{\zeta}^{\chi}_{H}(f \tilde{h}) 
 = \tilde{\zeta}^{\chi}_{H}(\tilde{h} f) = \tilde{\zeta}^{\chi}_{H}(\tilde{h}) \tilde{\zeta}^{\chi}_{H}(f) =
 h \tilde{\zeta}^{\chi}_{H}(f). 
\end{align*}
It implies that $\tilde{\zeta}^{\chi}_{H}(f) \in 
 Z\big((G/H) \sharp R_{H}\big)$, and it follows that 
\[
\tilde{\zeta}^{\chi}_{H}\big( Z (\Lambda_{\mathrm{con}} ) \big) 
\subseteq Z\big((G/H) \sharp R_{H}\big)
\]
and hence the lemma.
\end{proof}
 
In the above lemma, since $Z\big((G/H) \sharp R_{H}\big) = S_H$, 
we can
write $Z(\tilde{\zeta}^{\chi}_{H})$ as a map
$Z(\tilde{\zeta}^{\chi}_{H}): Z (\Lambda_{\mathrm{con}}) \rightarrow S_H$.

In the following, let 
\begin{align}\label{Lab345}
\zeta_0:=\prod_{(H, \chi) \in \tilde{G}_0} \tilde{\zeta}_{H}^{\chi} :  
\Lambda_{\mathrm{con}}  \rightarrow \prod_{(H, \chi) \in \tilde{G}_0} (G/H) 
\sharp R_{H}
\end{align}
and 
\begin{align}\label{Lab245}
Z(\zeta_{0}):=\prod_{(H, \chi) \in \tilde{G}_0} Z(\tilde{\zeta}_{H}^{\chi}) : 
Z ( \Lambda_{\mathrm{con}} ) \rightarrow \prod_{(H, \chi) \in \tilde{G}_0} S_{H}.
\end{align}
be the induced map on the centers.
Since $S_H \subseteq R_{H}$ as algebras, $S_H$ is a reduced ring for any pair 
$(H, W_{H})$. Hence, 
$\prod_{(H, \chi) \in \tilde{G}_0} S_{H}$ is also a reduced ring. 

\begin{definition}[The fifth homomorphism]
Let $\sqrt{Z( \Lambda_{\mathrm{con}})} $ be the reduced ring 
of $Z( \Lambda_{\mathrm{con}})$. 
Let
\begin{align}\label{Fifth}
\tilde{\zeta}_{0} : 
\sqrt{Z( \Lambda_{\mathrm{con}})} \rightarrow 
\prod_{(H, \chi) \in \tilde{G}_0} S_{H},
\end{align}
be the map induced from \eqref{Lab245}
by taking the reduced rings on both sides.
\end{definition}

Recall that $R^{\widehat{M}} 
= \sqrt{Z( \Lambda_{\mathrm{con}} )}$ (see the beginning of \S\ref{Dec9822415684}).
In the following we write
$\tilde{\zeta}_{0}$ as $\tilde{\zeta}_0: R^{\widehat{M}} 
\rightarrow \prod_{(H, \chi) \in \tilde{G}_0} S_{H}$.
The following is the main result of this subsection.

\begin{theorem}\label{Key4}
$\tilde{\zeta}_0: R^{\widehat{M}} 
\rightarrow \prod\limits_{(H, \chi) \in \tilde{G}_0} S_{H}$ 
is an injection of algebras. 
\end{theorem}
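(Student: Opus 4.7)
The plan is to establish injectivity by showing that the kernel of $Z(\zeta_0) : Z(\Lambda_{\mathrm{con}}) \to \prod_{(H,\chi)\in\tilde{G}_0} S_H$ coincides with the nilradical of $Z(\Lambda_{\mathrm{con}})$. Since each $S_H$ is a subring of the domain $R_H$, the target product is reduced, so the nilradical automatically lies in the kernel; this is what makes $\tilde{\zeta}_0$ well-defined from $R^{\widehat{M}}=\sqrt{Z(\Lambda_{\mathrm{con}})}$ in the first place, and injectivity amounts to the reverse inclusion.

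For the reverse inclusion, I would start from an arbitrary $z \in Z(\Lambda_{\mathrm{con}})$ annihilated by all $Z(\tilde{\zeta}_H^\chi)$, and decompose $z = \sum_{\chi\neq\chi_0} z_\chi$ with $z_\chi := e_\chi z e_\chi$, using that $e=0$ in $\Lambda_{\mathrm{con}}$. By (\ref{SGUB8735}), each $z_\chi$ lies in $e_\chi\Lambda_{\mathrm{con}}e_\chi \cong S/\bar{I}_\chi$. The centrality of $z$, together with the connectedness of $Q_{\mathrm{con}}$ (Proposition \ref{Cl11}), forces the $z_\chi$ to be compatible under the arrows $e_\chi\Lambda_{\mathrm{con}}e_{\chi'}$, so it suffices to show that each $z_\chi$ is nilpotent in $S/\bar{I}_\chi$ (equivalently, lies in $I_\chi/\bar{I}_\chi$).

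The core step is to unwind what $\tilde{\zeta}_H^\chi$ does to the diagonal piece $z_\chi$. Using the factorization $\zeta_H^\chi = \tau^H \circ \Pi_{(H,\chi)}$ together with the explicit surjection $\tau^H_{\lambda,\lambda}$ that kills the extra factor $k[W'_H]^H$ appearing in (\ref{Lab11}), one checks that on $e_\chi\Lambda_{\mathrm{con}}e_\chi$ the map $\tilde{\zeta}_H^\chi$ factors through $S_H = R_H^{G/H}$ via the quotient $S \twoheadrightarrow S_H$ associated to the inclusion $W_H\subseteq V$. Thus $z_\chi$ maps to the image of (the lift of) $z_\chi$ in $S_H$ for every pair $(H,\chi)\in\tilde{G}_0$ for which the class $[\chi]$ belongs to $\{\lambda_i^{(H,\chi)}\}_i$; by the definition of $\tilde{G}_0$, these $(H,\chi)$ are precisely the ones whose component $\mathrm{Spec}(S_H)$ of $\sqrt{\mathrm{Sing}(\mathrm{Spec}(S))}$ contains $\mathrm{Spec}(S/I_\chi)$. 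Since by Lemma \ref{IdemSpli} the scheme $\mathrm{Spec}(S/I_\chi)$ is already contained in the reduced singular locus, and since the reduced singular locus of an abelian quotient decomposes as the (scheme-theoretic) union $\bigcup_H \mathrm{Spec}(S_H)$, the ring $S/I_\chi$ embeds into the product of those $S_H$ that cover it. Consequently, $\tilde{\zeta}_H^\chi(z_\chi)=0$ for all relevant $(H,\chi)$ forces $z_\chi$ to vanish modulo the nilpotents of $S/\bar{I}_\chi$, giving the desired nilpotency.

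The main obstacle I anticipate is the third paragraph: one must pin down precisely which $(H,\chi)\in\tilde{G}_0$ contribute nontrivially to $z_\chi$ under $\tilde{\zeta}_H^\chi$, and verify that the collection of the corresponding projections $S/I_\chi \to S_H$ is jointly injective. This amounts to matching the combinatorics of $\tilde{G}_0$ (equivalence classes of pairs $(H,\chi)$ modulo restriction) with the irreducible components of $\sqrt{\mathrm{Sing}(\mathrm{Spec}(S))}$, together with a careful check that $\Pi_{(H,\chi)}$ restricted to the diagonal piece $e_\chi\Lambda_{\mathrm{con}}e_\chi$ agrees with the canonical projection $S/\bar{I}_\chi \twoheadrightarrow S_H$; both tasks require the explicit character bookkeeping of \S\ref{SSG} and the surjectivity statements of Lemmas \ref{Keyy11} and \ref{Keyy12}.
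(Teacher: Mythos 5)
Your overall reduction is correct and matches the paper's: pass to the idempotent decomposition $z = \sum_\chi z_\chi$ with $z_\chi \in e_\chi\Lambda_{\mathrm{con}}e_\chi \cong S/\bar I_\chi$, note that the $\tilde\zeta^\chi_H$ act on each diagonal block via the quotient $S\twoheadrightarrow S_H$ (this does follow from the analysis in Lemma~\ref{Key7}, and, as you anticipate, $\bar I_\chi\subseteq I_H$ for $(H,\chi)\in\tilde G_0$ so the map factors through $S/\bar I_\chi$), and reduce to showing each $z_\chi$ nilpotent. Up to this point you and the paper agree.

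The gap is in the sentence asserting that, \emph{by the definition of $\tilde G_0$}, the pairs $(H,\chi')\in\tilde G_0$ with $\chi'|_H=\chi|_H$ are ``precisely the ones whose component $\mathrm{Spec}(S_H)$ contains $\mathrm{Spec}(S/I_\chi)$.'' This is not a definitional matter; it is a substantive geometric claim, essentially the statement $I_\chi=\bigcap_{H:\ \chi|_H\neq 1} I_H$. Nothing in the definition of $\tilde G_0$ (which is purely representation-theoretic: $\chi|_H$ nontrivial) identifies those $H$ with the irreducible components of the reduced singular locus covering $\mathrm{Spec}(S/I_\chi)$. You do flag the issue in your final paragraph, but you underestimate it: establishing this identification is exactly where all the work in the paper's proof lives. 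The paper does not prove your clean ideal identity directly; instead, picking a monomial lift $f$ of $z_\chi$, it shows $f\in I_\chi$ by a case analysis on the support of $f$ using Lemma~\ref{KeyPrLe1} (a structural statement that $\prod_{x_i\in\mathfrak S} x_i^{\#G}$ dies in the reduced singular locus when $\mathfrak S\not\subset R_H$ for all $H$), the Chevalley--Shephard--Todd theorem via Lemma~\ref{KeyPr}, delicate character arithmetic involving $\mathrm{Ker}(\chi_{x_1})$, and a recursion passing from $G\sharp R$ to smaller skew group algebras $G/\bar H\sharp R_{\bar H}$. The hypothesis $\zeta_0(\bar h)=0$ enters precisely to rule out the possibility that $\{f_r\}_r\subseteq R_H$ for some $(H,\chi)\in\tilde G_0$ (in which case $f$ would survive the quotient $S\twoheadrightarrow S_H$). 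In short: your outline correctly identifies the target and the right variables, but you have not supplied the argument that closes the loop between the combinatorics of $\tilde G_0$ and the component structure of $V(I_\chi)$, and simply citing the definition of $\tilde G_0$ does not do it.

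A smaller point: the appeal to ``centrality together with connectedness of $Q_{\mathrm{con}}$ forces the $z_\chi$ to be compatible under the arrows'' is not needed and somewhat misleading. To reduce to nilpotency of each $z_\chi$ you only need that the $e_\chi$ are orthogonal idempotents, so $z^m=\sum_\chi z_\chi^m$; connectedness of $Q_{\mathrm{con}}$ is used elsewhere (Proposition~\ref{Cl11} and the proof of Lemma~\ref{Key7}), not for this step.
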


The proof of this theorem 
is quite technical, and we postpone it to Appendix \ref{App:B}.

\subsubsection{The singular locus of $\mathrm{Spec}(S)$ and 
$\mathrm{Spec}(R^{\widehat{M}})$}

In this subsection, we show that the affine scheme
of $R^{\widehat{M}}$ is isomorphic to the reduced singular locus 
$\sqrt{\mathrm{Sing}\big(\mathrm{Spec}(S)\big)}$
(see Theorem \ref{HatM}).
The proof roughly goes as follows:
\begin{enumerate} 
\item[(1)] We first construct a canonical cover of 
$\mathrm{Spec}\big(\sqrt{\varphi(S)/\varphi(S) 
\cap \Lambda  e \Lambda}  \big)$ (see surjection (\ref{Cove11234}));
   
\item[(2)] We then
construct the canonical cover of 
$\mathrm{Spec}(R^{\widehat{M}})$ (see surjection (\ref{Cove112}));

\item[(3)] After that, we give the morphism 
$\mu^{\#}:  \mathrm{Spec}(R^{\widehat{M}}) 
\rightarrow \mathrm{Spec}\big(\varphi(S)/ \sqrt{\varphi(S) 
\cap \Lambda  e \Lambda} \big)$ of schemes, 
and show that this morphism takes the cover (\ref{Cove112}) to 
the cover (\ref{Cove11234}) and is surjective;

\item[(4)]  Finally we show $\mu^{\#}$ is injective.
\end{enumerate} 
We do these step by step. First, let
$\tilde{\zeta}^{\chi}_{H}$ and $Z(\tilde{\zeta}^{\chi}_{H})$ be 
as in Lemmas \ref{Key2} and \ref{Key3}.
We first have the following. 

\begin{lemma}\label{Key7}
For any $(H, \chi), (H, \chi') \in \tilde{G}_0$, 
$Z(\tilde{\zeta}^{\chi}_{H}) = Z(\tilde{\zeta}^{\chi'}_{H})$. In other words, the
algebra homomorphism 
$Z(\tilde{\zeta}^{\chi}_{H})$ only depends on $H$ . 
\end{lemma}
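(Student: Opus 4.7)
The plan is to exploit the factorization $\zeta^\chi_H=\zeta_H\circ T_{\chi^{-1}}$, where $T_{\chi^{-1}}$ is the algebra automorphism of $\Lambda$ induced by tensoring $G$-equivariant reflexive modules with $V_{\chi^{-1}}$; this automorphism permutes the summands of $\widehat{M}$ via $M_\mu\mapsto M_{\chi^{-1}\mu}$, satisfies $T_{\chi^{-1}}(e_\mu)=e_{\chi^{-1}\mu}$, and fixes $\varphi(S)$ pointwise since $\varphi(s)=\sum_\mu s\cdot e_\mu$ is symmetric in all idempotents. The identity $\zeta^\chi_H=\zeta_H\circ T_{\chi^{-1}}$ will then follow by a direct check that $\Pi_{(H,\chi)}=Pr_H\circ T_{\chi^{-1}}$ via the definitions of $Pr_{(H,\chi)}$ and $\Delta_{(H,\chi)}$.

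Given $z\in Z(\Lambda_{\mathrm{con}})$, I would choose a diagonal lift $\tilde z=\sum_{\mu\neq\chi_0}s_\mu e_\mu\in\Lambda$ with $s_\mu\in S$; this is always possible because $z$ commutes with each $e_\mu$ modulo $\Lambda e\Lambda$, so any lift can be corrected to a diagonal one. Applying the factorization above yields
\[
\tilde\zeta^\chi_H(z)=\sum_{\nu\in X^H_{\chi_0}}\tau^H(s_{\chi\nu})\cdot e_\nu\in(G/H)\sharp R_H.
\]
By Lemma \ref{Key3} this element lies in $Z\bigl((G/H)\sharp R_H\bigr)=S_H$, which forces $\tau^H(s_\mu)$ to take a single value $c_\chi\in S_H$ for all $\mu\in X^H_\chi$. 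The lemma thus reduces to showing $c_\chi=c_{\chi'}$, i.e., $\tau^H(s_\mu)=\tau^H(s_{\mu'})$ for all $\mu,\mu'$ with nontrivial restriction to $H$.

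Within a single $H$-coset (i.e.\ $\mu|_H=\mu'|_H$) the equality is straightforward: for $\alpha\in e_\mu\Lambda e_{\mu'}\cong(V_{\mu\mu'^{-1}}\otimes R)^G$, centrality gives $(s_\mu-s_{\mu'})\alpha\in\Lambda e\Lambda\cap e_\mu\Lambda e_{\mu'}\cong M_{\mu'^{-1}}\cdot M_\mu\subseteq R$; pushing through the $G$-equivariant surjection $\pi:R\twoheadrightarrow R_H$ kills the right-hand side because both $M_{\mu'^{-1}}$ and $M_\mu$ carry nontrivial $H$-characters and hence land in the $W'_H$-ideal, so $\tau^H(s_\mu-s_{\mu'})\cdot\pi(\alpha)=0$ in $R_H$. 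Choosing $\alpha$ with $\pi(\alpha)\neq 0$ (possible since $\mu\mu'^{-1}|_H=1$) and using that $R_H$ is a domain yields $\tau^H(s_\mu)=\tau^H(s_{\mu'})$.

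The main obstacle is the across-coset case $\mu|_H\neq\mu'|_H$, where every $\alpha\in e_\mu\Lambda e_{\mu'}$ already lies in $\mathfrak{m}_{W'_H}\cdot R$ and thus $\pi(\alpha)=0$, so the direct argument fails. To bridge the cosets I would pass to the completed local ring $\widehat{S}_{\bar w}$ at a generic point $\bar w\in\mathrm{Spec}(S_H)$. After an étale base change, $\widehat{S}_{\bar w}$ is a completed tensor product of $R_H$ with $k[[W'_H]]^H$, and the localized indecomposable summands of $\widehat{M}$ depend only on the restriction $\mu|_H\in\widehat{H}$ via the local McKay correspondence for the faithful action $H\curvearrowright W'_H$. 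By Proposition \ref{Cl11} applied to this local McKay quiver (which is connected because $W'_H$ is a faithful $H$-representation), any two nontrivial $H$-characters $\xi,\xi'$ are joined by a chain of arrows, each representing a non-vanishing morphism in $\underline{\mathrm{CM}}(\widehat{S}_{\bar w})$. Propagating the local centrality of $z$ along such a chain identifies the local central scalars $\tau^H(s_\mu)|_{\bar w}$ and $\tau^H(s_{\mu'})|_{\bar w}$ inside the local center $\widehat{S}_{H,\bar w}$. As $\bar w$ ranges over generic points of the irreducible variety $\mathrm{Spec}(S_H)$ and $S_H$ is a domain, $\tau^H(s_\mu)-\tau^H(s_{\mu'})$ vanishes globally, completing the proof.
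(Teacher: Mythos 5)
Your overall strategy agrees with the paper's in its skeleton: both reduce $Z(\tilde\zeta^\chi_H)(z)$ to the image $\tau^H(s_\chi)$ of the diagonal component of a diagonal lift $\tilde z=\sum_\mu s_\mu e_\mu$, and both invoke the connectedness of the quiver $Q^H_{\mathrm{con}}$ for $H\curvearrowright W'_H$ (Proposition \ref{Cl11}) to bridge different $H$-cosets. Your opening moves (the twist $T_{\chi^{-1}}$ fixing $\varphi(S)$, the diagonal lift, the fact that $\tilde\zeta^\chi_H(z)\in Z((G/H)\sharp R_H)=S_H$ forces the $\tau^H(s_{\chi\nu})$ to agree within a coset) are fine, though the detailed within-coset paragraph that follows is redundant: Lemma \ref{Key3} already disposed of that case.

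The gap is exactly where you locate it — the across-coset case $\mu|_H\neq\mu'|_H$ — and your proposed remedy is not carried out. Passing to $\widehat S_{\bar w}$ at a generic point of $\mathrm{Spec}(S_H)$, performing an \'etale base change, and appealing to a ``local McKay correspondence'' to reduce to $H\curvearrowright W'_H$ involves several claims that would each need proof (the identification of the completed summands $(\widehat M_\mu)_{\bar w}$ up to free summands by $\mu|_H$ alone, the compatibility of the central element with this identification, and the actual propagation along arrows, which you assert but do not argue). This is heavy machinery for a fact the paper establishes globally and combinatorially. The paper reduces, along a path in $Q^H_{\mathrm{con}}$, to a single arrow corresponding to a $W'_H$-variable $x_r$; centrality of $\tilde z$ on that arrow yields $(h_\chi - h_{\chi\chi_{x_r}})\,x_r \in e_\chi\Lambda e\Lambda e_{\chi\chi_{x_r}}$, the paper then observes $\bar I_\chi,\bar I_{\chi\chi_{x_r}}\subseteq I_H=\ker\tau^H_{\chi_0,\chi_0}$, and from this it deduces $h_\chi - h_{\chi\chi_{x_r}}\in I_H$. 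No localization or completion is needed; the constraint propagates one arrow at a time directly inside $S$. You identified the crux correctly, but the bridge you propose across it remains unbuilt and is also unnecessary.
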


Again, since the proof of this lemma is quite long, we postpone
it to Appendix \ref{App:C}.
Now, for the pair $(H, \chi) \in \tilde{G}_0$, taking the 
reduced rings, $Z(\tilde{\zeta}^{\chi}_{H})$ induces the 
following surjection of algebras: 
\begin{align}\label{DEpp}
R^{\widehat{M}} \cong \sqrt{Z(\Lambda_{\mathrm{con}})} 
\rightarrow Z(G/H \sharp R_H) \cong S_H. 
\end{align} 
By Lemma \ref{Key7}, we know that the above algebra homomorphism 
(\ref{DEpp}) is independent of the character $\chi$. Hence, it only depends  
on pair $(H, W_H)$. 
Thus, we can write the above surjection (\ref{DEpp})
just as $\tilde{\xi}_{H}$. 
Since $Z(\zeta_{0}):=\prod_{(H, \chi) \in \tilde{G}_0} 
Z(\tilde{\zeta}_{H}^{\chi})$, considering 
the associated reduced rings, 
it is obvious that 
\begin{align}\label{Summm}
\tilde{\zeta}_0 = \prod_{(H, \chi) \in \tilde{G}_0} \tilde{\xi}_{H}
\end{align} 
by the definition of $\tilde{\zeta}_0$ (see (\ref{Fifth})) and $\tilde{\xi}_{H}$.

Similarly to Lemma \ref{Key3}, by taking the centers of 
the surjection $\zeta_{H}: \Lambda \twoheadrightarrow G/H 
\sharp R_H$ for the pair $(H, W_H)$, we get the following 
algebra homomorphism
\begin{align}\label{HopLasneed}
Z(\zeta_{H}) : S = Z(\Lambda) \rightarrow Z\big( G/H \sharp R_H \big) = S_H .
\end{align}
Now, for any two pairs $(H, W_H)$ and $(H', W_{H'})$, there is a unique 
pair $(G_{H, H'}, W_{H} \cap W_{H'})$, where $(G_{H, H'},  W_{H} \cap W_{H'})$ 
is the maximal subgroup 
whose invariant subspace is $ W_{H} \cap W_{H'}$.
It is direct to see that $H, H' \subseteq G_{H, H'}$. 
Next, 
replace $G$ by $G/H$, $V$ by $W_H$, $H$ by $G_{H, H'}/H$, $W_H$ by $W_{H} \cap W_{H'}$.  
Analogously to $\tau^{H}_{\chi_i, \chi_j}$ 
for any two characters $\chi_i$ and $\chi_j$ of $G$, 
we obtain 
\begin{align*}
 \tau^{(G_{H, H'}/H)}_{\lambda_i, \lambda_j}:
& \mathrm{Hom}_{S_H}\big((V_{\lambda_i} \otimes R_H)^{(G/H)}, 
 (V_{\lambda_j} \otimes R_H)^{(G/H)}\big) \\
&\twoheadrightarrow 
\mathrm{Hom}_{S_{G_{H, H'}}} \big((V_{\lambda_i} \otimes R_{G_{H, H'}})^{(G_{G_{H, H'}}/H)}, 
 (V_{\lambda_j} \otimes R_{G_{H, H'}})^{(G_{H, H'}/H)}\big)  
\end{align*}
for any two characters $\lambda_i$ 
and 
$\lambda_j$ in $\{\lambda_{i}^{(G_{H, H'}/H)}\}_i$.
Then 
analogously to $\zeta_H$ and $Z(\zeta_H)$ respectively 
(see (\ref{FirstM}) and (\ref{HopLasneed})),
we obtain the corresponding algebra homomorphisms 
\begin{align*}
\zeta_{(G_{H, H'}/H)}:G/H \sharp R_H \twoheadrightarrow 
G/G_{H, H'} \sharp R_{G_{H, H'}}
\quad\mbox{and}\quad
Z\big( \zeta_{(G_{H, H'}/H)} \big):  S_H \rightarrow S_{G_{H, H'}}.
\end{align*}

\begin{lemma}\label{Inters}
For any two pairs 
$(H, W_H)$ and $(H', W_{H'})$, 
we have the following
commutative diagram of algebra homomorphisms  
\begin{equation}\label{LemmaN}
\begin{split}
\xymatrix{ 
R^{\widehat{M}}  \ar[rrr]^{\tilde{\xi}_{H}} 
\ar[rrrd]_{\tilde{\xi}_{G_{H, H'}}} &&&  S_H \ar[d]^{Z(\zeta_{(G_{H, H'}/H)})} \\
 &&& S_{G_{H, H'}}
}
\end{split}
\end{equation}
\end{lemma}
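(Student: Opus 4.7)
The plan is to verify the commutativity by tracing both paths back to explicit maps on $G$-equivariant polynomial data, and then to observe that the two compositions agree already at the (non-reduced) level of $Z(\tilde{\zeta}^{\chi}_{-})$ for a compatible choice of characters. Since $\prod_{(H,\chi)} S_H$ is reduced and the reduced quotient is functorial, it suffices to prove that $Z(\zeta_{(G_{H,H'}/H)}) \circ Z(\tilde{\zeta}^{\chi}_H) = Z(\tilde{\zeta}^{\chi}_{G_{H,H'}})$ holds on $Z(\Lambda_{\mathrm{con}})$ for an arbitrary $\chi$ with $(G_{H,H'},\chi) \in \tilde{G}_0$ (noting that $(H,\chi)$ then also lies in $\tilde{G}_0$ since $H \subseteq G_{H,H'}$).

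The key geometric input is the compatible decomposition $V = W_{G_{H,H'}} \oplus U \oplus W'_H$ as $G_{H,H'}$-representations, where $U$ is the $H$-invariant complement of $W_{G_{H,H'}}$ inside $W_H$, so that $W_H = W_{G_{H,H'}} \oplus U$ and $W'_{G_{H,H'}} = U \oplus W'_H$. This yields a tensor decomposition $k[W'_{G_{H,H'}}] \cong k[U] \otimes k[W'_H]$ of $G_{H,H'}$-representations, and hence
\[
k[W'_{G_{H,H'}}]^{G_{H,H'}} \;\cong\; \bigl(k[U]^{G_{H,H'}/H}\bigr) \otimes \bigl(k[W'_H]^H\bigr),
\]
since $H$ acts trivially on $U$. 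Under this identification, the projection $k[W'_{G_{H,H'}}]^{G_{H,H'}} \twoheadrightarrow k$ onto the constant term factors as (projection of $k[W'_H]^H$ onto $k$)\;followed by\;(projection of $k[U]^{G_{H,H'}/H}$ onto $k$). This is exactly the statement that the surjection $\tau^{G_{H,H'}}$ (defined in \eqref{TauH}) factors as $\tau^{(G_{H,H'}/H)} \circ \tau^{H}$ on the Hom-spaces of the relevant summands.

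Next, I would check that the projections $Pr$ behave compatibly: the characters $\{\lambda_i^{G_{H,H'}}\}$ (those trivial on $G_{H,H'}$) form a subset of $\{\lambda_i^{H}\}$ (those trivial on $H$), since $H \subseteq G_{H,H'}$; hence the projection $Pr_{G_{H,H'}}$ factors through $Pr_H$ after translating by the twist $\Delta_{(H,\chi)}$ of \eqref{eq:iso1} (the twists are compatible because the set of characters $\{\lambda_i^{(G_{H,H'},\chi)}\}$ sits inside $\{\lambda_i^{(H,\chi)}\}$ in the same way). Combining the two factorizations gives $\zeta^{\chi}_{G_{H,H'}} = \zeta_{(G_{H,H'}/H)} \circ \zeta^{\chi}_{H}$ as algebra surjections $\Lambda \twoheadrightarrow (G/G_{H,H'}) \sharp R_{G_{H,H'}}$. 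Descending to $\Lambda_{\mathrm{con}}$ via Lemma \ref{Key2}, taking centers (which commutes with composition), and finally passing to reduced rings, we obtain $\tilde{\xi}_{G_{H,H'}} = Z(\zeta_{(G_{H,H'}/H)}) \circ \tilde{\xi}_{H}$, which is precisely the commutativity of \eqref{LemmaN}.

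The main obstacle I anticipate is bookkeeping: one must keep track of the simultaneous diagonalizations and the chain $H \subseteq G_{H,H'} \subseteq G$ through several layers of invariants, Hom-spaces under \eqref{Lab11}--\eqref{lab22}, twists by characters, and direct-summand projections. Once the tensor decomposition $k[W'_{G_{H,H'}}] \cong k[U] \otimes k[W'_H]$ is written down $G_{H,H'}$-equivariantly and the corresponding character sets are matched up, each step is formal; the only subtlety is to verify that the idempotent twist $\Delta_{(H,\chi)}$ used in defining $\zeta^{\chi}_H$ versus $\zeta^{\chi}_{G_{H,H'}}$ restricts compatibly, but this follows because $\chi$ is the same character in both constructions and the twist depends only on $\chi$.
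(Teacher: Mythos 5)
Your proposal follows essentially the same strategy as the paper: reduce to showing $\zeta^{\chi}_{G_{H,H'}} = \zeta_{(G_{H,H'}/H)} \circ \zeta^{\chi}_{H}$ on $\Lambda$ (for $\chi$ with $(G_{H,H'},\chi)\in\tilde G_0$, which forces $(H,\chi)\in\tilde G_0$), then verify this by matching the $\tau$-factorizations and $Pr$-projections, and finally descend through $\Lambda_{\mathrm{con}}$, centers, and reduced rings. The key decomposition $V = W_{G_{H,H'}} \oplus U \oplus W'_H$ you use is the one underlying the paper's Eq.~\eqref{Labb213}.

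One technical point deserves a warning. You assert $k[W'_{G_{H,H'}}]^{G_{H,H'}} \cong (k[U]^{G_{H,H'}/H}) \otimes (k[W'_H]^H)$ ``since $H$ acts trivially on $U$.'' But triviality of the $H$-action on $U$ only gives the first layer of invariants: $k[W'_{G_{H,H'}}]^H = k[U]\otimes k[W'_H]^H$. Taking the further $G_{H,H'}/H$-invariants yields $\big(k[U]\otimes k[W'_H]^H\big)^{G_{H,H'}/H}$, which in general strictly contains $k[U]^{G_{H,H'}/H}\otimes (k[W'_H]^H)^{G_{H,H'}/H}$ because of cross terms --- and $G_{H,H'}/H$ need not act trivially on $k[W'_H]^H$. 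The conclusion you draw from this (that the augmentation to $k$ factors compatibly, hence $\tau^{G_{H,H'}}_{\chi_j,\chi_r} = \tau^{(G_{H,H'}/H)}_{\chi_j,\chi_r} \circ \tau^{H}_{\chi_j,\chi_r}$) is nevertheless correct, but it is cleaner to deduce it, as the paper does, from the commutative triangle of coordinate-ring projections $k[V] \to k[W_H] \to k[W_{G_{H,H'}}]$: applying \eqref{Lab11}--\eqref{lab22} and passing to the relevant isotypic Hom-spaces yields \eqref{Labb213} directly without the problematic tensor decomposition of invariants. The rest of your argument --- the inclusion $\{\lambda_i^{G_{H,H'}}\}\subseteq \{\lambda_i^{H}\}$, the compatibility of the twists $\Delta$, and the descent to $\tilde\xi$ --- matches the paper's Eq.~\eqref{COMM9864}--\eqref{Bro13}.
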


We postpone the proof of this lemma to \ref{App:D}.
Before introducing the main result of this subsection, 
we should note that 
$R^{\widehat{M}}$ is a commutative Noetherian ring. 
In fact, $\widehat{M}$ is a finitely generated $S$-module. 
Hence $\Lambda = \mathrm{End}_{S}(\widehat{M})$ is 
finitely generated $S$-module, and then 
$\Lambda_{\mathrm{con}}$ and $Z(\Lambda_{\mathrm{con}})$ 
are both finitely generated $S$-modules. Thus
$R^{\widehat{M}}$ is a finitely generated $S$-module. 
Since $S$ is a commutative Noetherian ring, $R^{\widehat{M}}$ is also a commutative Noetherian ring. 

\begin{theorem}\label{HatM}
$\mathrm{Spec}(R^{\widehat{M}}) \cong 
\sqrt{\mathrm{Sing}\big(\mathrm{Spec}(S)\big)}$ as subschemes of $\mathrm{Spec}(S)$. 	
\end{theorem}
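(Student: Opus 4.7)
The plan is to follow the four-step outline already indicated in the paper, and establish the isomorphism by comparing compatible covers of both sides by $\bigsqcup_H \mathrm{Spec}(S_H)$, where $H$ ranges over nontrivial subgroups of $G$ which are maximal with respect to their fixed subspace $W_H$. Both $\sqrt{\mathrm{Sing}(\mathrm{Spec}(S))}$ and $\mathrm{Spec}(R^{\widehat{M}})$ will be exhibited as the same closed subscheme of $\mathrm{Spec}(S)$ through a natural comparison morphism $\mu^{\#}$, defined via the canonical map $\varphi:S\hookrightarrow\Lambda$.

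First, for the cover of $\sqrt{\mathrm{Sing}(\mathrm{Spec}(S))}$, I use Corollary \ref{cor:reducedloci} to realize it as $\mathrm{Spec}(\varphi(S)/\sqrt{\varphi(S) \cap \Lambda e \Lambda})$, and combine Lemma \ref{IdemSpli} with the character-theoretic decomposition of \S\ref{SSG} to identify the irreducible components as the $\mathrm{Spec}(S_H)\subseteq\mathrm{Spec}(S)$. Dualizing, I obtain an algebra injection $\varphi(S)/\sqrt{\varphi(S)\cap\Lambda e\Lambda}\hookrightarrow \prod_H S_H$ whose $H$-th component is the natural restriction map $Z(\zeta_H):S\to S_H$. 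For the cover of $\mathrm{Spec}(R^{\widehat{M}})$, Theorem \ref{Key4} provides the injection $\tilde{\zeta}_0:R^{\widehat{M}}\hookrightarrow\prod_{(H,\chi)\in\tilde{G}_0}S_H$, and Lemma \ref{Key7} tells us that the $(H,\chi)$-component depends only on $H$; after collapsing the redundant factors, we obtain an injection $R^{\widehat{M}}\hookrightarrow\prod_H S_H$ whose $H$-th component is $\tilde{\xi}_H$. Since $\varphi$ lands in the center of $\Lambda$, composing with $\Lambda\to\Lambda_{\mathrm{con}}$ and passing to reduced centers yields a canonical algebra homomorphism
$$\mu:\varphi(S)/\sqrt{\varphi(S)\cap\Lambda e\Lambda}\longrightarrow R^{\widehat{M}},$$
and unwinding the definitions of $\tilde{\zeta}_H^\chi$ and $\varphi$ establishes the compatibility $\tilde{\xi}_H\circ\mu=Z(\zeta_H)$ for every $H$, so that $\mu^{\#}$ intertwines the two covers.

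The composition
$$\varphi(S)/\sqrt{\varphi(S)\cap\Lambda e\Lambda}\xrightarrow{\ \mu\ }R^{\widehat{M}}\hookrightarrow\prod_H S_H$$
is injective, since it realizes the coordinate ring of the reduced union $\bigcup_H\mathrm{Spec}(S_H)$ inside $\prod_H S_H$; hence $\mu$ itself is injective and $\mu^{\#}$ is surjective on points. For the remaining injectivity of $\mu^{\#}$, equivalently the surjectivity of $\mu$, the approach is to identify both $\varphi(S)/\sqrt{\varphi(S)\cap\Lambda e\Lambda}$ and $R^{\widehat{M}}$ with the equalizer of the two natural maps $\prod_H S_H\rightrightarrows\prod_{(H,H')}S_{G_{H,H'}}$ supplied by Lemma \ref{Inters}, i.e., with the coordinate ring of the scheme-theoretic gluing over the pairwise intersections $W_H\cap W_{H'}$. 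I expect the main obstacle to lie precisely here: namely, verifying that the image of $R^{\widehat{M}}$ in $\prod_H S_H$ is not strictly larger than the image of the coordinate ring of the singular locus. This amounts to analyzing central elements of $\Lambda_{\mathrm{con}}$ modulo nilpotents and showing that they all arise from $\varphi(S)$, which should follow from the skew-group-algebra structure $\Lambda\cong G\sharp R$ combined with the connectedness of $Q_{\mathrm{con}}$ established in Proposition \ref{Cl11} and the explicit formulas for $\tilde{\zeta}_H^\chi$ and $\tilde{\xi}_H$.
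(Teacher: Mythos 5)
Your proposal correctly reproduces the four-step architecture that the paper announces before its proof, and Steps 1--3 are essentially the same as the paper's: you build the two covers by $\bigsqcup_H\mathrm{Spec}(S_H)$ from Lemma~\ref{IdemSpli}, Theorem~\ref{Key4} and Lemma~\ref{Key7}, define $\mu$ from $\varphi$, and establish the compatibility $\tilde{\xi}_H\circ\mu=\varphi_H$. Two issues remain.

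First, a minor one: the inference ``the composition $\varphi(S)/\sqrt{\varphi(S)\cap\Lambda e\Lambda}\to R^{\widehat M}\hookrightarrow\prod_H S_H$ is injective, hence $\mu^{\#}$ is surjective on points'' does not follow purely from injectivity of $\mu$ (consider a normalization $k[t^2,t^3]\hookrightarrow k[t]$; that is injective yet not every closed point comes from a closed point, and conversely one cannot in general conclude surjectivity from injectivity of the ring map). The paper instead deduces surjectivity of $\mu^{\#}$ from the fact that $\mu^{\#}\circ(\tilde{\xi}_H)^{\#}=\varphi_H^{\#}$ together with the surjectivity of $\bigsqcup_H\varphi_H^{\#}$; your compatibility statement already gives you exactly that, so the fix is to make the cover argument explicit rather than appealing to injectivity of $\mu$. (Alternatively one could use that $R^{\widehat M}$ is module-finite over $\varphi(S)/\sqrt{\varphi(S)\cap\Lambda e\Lambda}$ and invoke lying-over.)

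Second, and more seriously, Step~4 is not actually carried out. You propose to realize both rings as the equalizer of $\prod_H S_H\rightrightarrows\prod_{(H,H')}S_{G_{H,H'}}$, which would indeed prove the theorem, and you correctly point to Lemma~\ref{Inters} as supplying the compatibility over the intersections. But you then write that verifying $R^{\widehat M}$ lands inside (and not strictly beyond) this equalizer ``should follow from'' the skew-group-algebra structure, connectedness of $Q_{\mathrm{con}}$, and the explicit formulas---without giving the argument. This is exactly the heart of the theorem and it is not addressed. The paper's Step~4 replaces this algebraic claim by a geometric one: using Lemma~\ref{Inters} it sets up the pullback square~\eqref{Dia11}, which shows that if $\mu^{\#}(p)=\mu^{\#}(p')$ with $p\in\mathrm{Im}\big((\tilde{\xi}_H)^{\#}\big)$ and $p'\in\mathrm{Im}\big((\tilde{\xi}_{H'})^{\#}\big)$, then both $p$ and $p'$ lie in $\mathrm{Im}\big((\tilde{\xi}_{G_{H,H'}})^{\#}\big)$, where $\mu^{\#}$ is already known to be injective because $\mu^{\#}\circ(\tilde{\xi}_{G_{H,H'}})^{\#}=\varphi_{G_{H,H'}}^{\#}$ is a closed immersion. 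Your equalizer idea is a legitimate alternative framing, but as written the proposal stops exactly where the real work begins; to complete it you would need to prove that an element of $R^{\widehat M}$ whose images in $S_H$ and $S_{H'}$ agree over $S_{G_{H,H'}}$ for all pairs necessarily comes from $\varphi(S)$, or equivalently switch to the paper's pointwise pullback argument.
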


\begin{proof}The proof consists of four steps.
\indexspace

\noindent{\it $\bm{Step}$ 1:
We construct a canonical cover of 
$\mathrm{Spec}\big(\varphi(S)
\big/ \sqrt{\big(\varphi(S) \cap \Lambda  e \Lambda}  \big)$.}

To this end, we consider the following composition of two algebra  
injections: 
$$
\varphi(S) \big/ \sqrt{\varphi(S) \cap (\Lambda e \Lambda)}  \hookrightarrow 
R^{\widehat{M}} \xrightarrow{\tilde{\zeta}_0} 
\prod_{(H, \chi) \in \tilde{G}_0} S_{H}
$$
(for the injection of the second homomorphism see Theorem \ref{Key4}).
We show the first homomorphism is also an injection.
In fact, $\varphi$ gives the injection:
$$
\varphi(S) \big/ \big(\varphi(S) \cap (\Lambda e \Lambda)\big)
\hookrightarrow \Lambda \big/ \Lambda e \Lambda \cong \Lambda_{\mathrm{con}}
$$
and the image of this injection is contained in 
$Z(\Lambda_{\mathrm{con}})$. Then 
by considering the associated reduced rings,
we get an injection 
\begin{equation}\label{map:toRhatM}
\varphi(S) \big/ \sqrt{\varphi(S) \cap (\Lambda e \Lambda)} \hookrightarrow R^{\widehat{M}}.
\end{equation}
Fixing a pair $(H, \chi) \in \tilde{G}_0$ and composing the  above injection
$\varphi(S)/ \sqrt{\varphi(S) \cap (\Lambda e \Lambda)} 
\hookrightarrow \prod_{(H, \chi) \in \tilde{G}_0} S_{H}$ with the canonical projection 
$\prod_{(H, \chi) \in \tilde{G}_0} S_{H} \twoheadrightarrow S_{H}$, 
we get an algebra homomorphism 
\begin{align}\label{VaH}
\varphi(S)/ \sqrt{\varphi(S) \cap (\Lambda e \Lambda)} 
\hookrightarrow R^{\widehat{M}} \xrightarrow{\tilde{\zeta}_0} 
\prod_{(H, \chi) \in \tilde{G}_0} S_{H} \twoheadrightarrow S_H,
\end{align}
which we denote by $\varphi_H$. 
Next, when composing $\varphi_H$ with the quotient map
$$S \cong \varphi(S) \rightarrow \varphi(S)/ \sqrt{\varphi(S) \cap (\Lambda e \Lambda)},$$
we get another algebra homomorphism  
$S \rightarrow S_H$. 
It is easy to check that this algebra homomorphism 
is exactly the quotient  
$$S = k[V]^G \twoheadrightarrow k[W_H]^{G/H} = S_H$$
associated to the canonical injection 
$W_H \big/ (G/H) \hookrightarrow (W_H \oplus W'_H) / G \cong V/G$ of schemes. 
Since $S \rightarrow S_H$ is 
a surjection of algebras, so is $\varphi_H$. 
Thus, $\varphi_H$ gives a closed injection of schemes, say 
$$ 
\varphi_H^{\#}: \mathrm{Spec}(S_H) \hookrightarrow 
\mathrm{Spec}\big(\varphi(S) / \sqrt{ \varphi(S) \cap (\Lambda e \Lambda)}\big).
$$ 

In the meantime, $\prod_{(H, \chi)} \varphi_H$
is exactly the injection 
$
\varphi(S)/ \sqrt{\varphi(S) \cap (\Lambda e \Lambda)}
\hookrightarrow R^{\widehat{M}} \xrightarrow{\tilde{\zeta}_0} 
\prod_{(H, \chi) \in \tilde{G}_0} S_{H}
$. 
Then the induced homomorphism of schemes 
$$
\bigsqcup\limits_{(H, \chi)} \varphi_H^{\#} : \bigsqcup\limits_{(H, \chi)} \mathrm{Spec}(S_H) \twoheadrightarrow \mathrm{Spec}\big( \varphi(S)/ \sqrt{\varphi(S) \cap (\Lambda e \Lambda)}  \big)
$$
is a cover of  
$\mathrm{Spec}\big(\varphi(S) \big/ \sqrt {\varphi(S) \cap (\Lambda e \Lambda)} \big)$. 
Moreover, by replacing $(H, \chi)$ by $H$ for any $(H, \chi) \in \tilde{G}_0$,
the above cover reduces to a new cover of 
$\mathrm{Spec}\big(\varphi(S) \big/ \sqrt{\varphi(S) \cap (\Lambda e \Lambda)} \big)$:
\begin{align}\label{Cove11234}
\bigsqcup\limits_{H} \varphi_H^{\#} : \bigsqcup\limits_{H} 
\mathrm{Spec}(S_H) \twoheadrightarrow \mathrm{Spec}
\big(\varphi(S) \big/ \sqrt{\varphi(S) \cap (\Lambda e \Lambda)} \big).
\end{align}

\noindent{\it $\bm{Step}$ 2:
We construct a canonical cover of $\mathrm{Spec}\big(R^{\widehat{M}}\big)$.}

Note that the composition 
$$
R^{\widehat{M}} \xrightarrow{\tilde{\zeta}_{0}}\prod_{(H, \chi) \in 
\tilde{G}_0} S_{H} \twoheadrightarrow S_H
$$
is exactly $\tilde{\xi}_H$ by definition (see (\ref{DEpp})). By 
$\eqref{VaH}$, 
$\tilde{\xi}_H$ is surjective since $\varphi_H$ is surjective. 
Then  
for any pair $(H, W_H)$,  
$$
(\tilde{\xi}_H)^{\#} :\mathrm{Spec}(S_H) \hookrightarrow \mathrm{Spec}(R^{\widehat{M}})
$$
is an injection of schemes, where $(\tilde{\xi}_{H})^{\#}$ is the homomorphism of 
schemes induced by $\tilde{\xi}^\chi_H$. 
Since $\tilde{\zeta}_0 = \prod_{(H, \chi)} \tilde{\xi}_H$ 
is an injection of algebras 
(see Theorem \ref{Key4} and (\ref{Summm})), 
we also get a cover of $\mathrm{Spec}(R^{\widehat{M}})$
$$
\bigsqcup\limits_{(H, \chi)} (\tilde{\xi}_H)^{\#} : 
\bigsqcup\limits_{(H, \chi)} \mathrm{Spec}(S_H) 
\twoheadrightarrow \mathrm{Spec}(R^{\widehat{M}}). 
$$
By definition of $\tilde{\xi}_H$ (see (\ref{DEpp})), 
it is independent of the character $\chi$. 
Hence, if we only consider the subcover
$\bigsqcup\limits_{H} (\tilde{\xi}_H)^{\#}$ instead of 
$\bigsqcup\limits_{(H, \chi)} (\tilde{\xi}_H)^{\#}$, since 
$\mathrm{Im}\big( \bigsqcup\limits_{H} (\tilde{\xi}_H)^{\#} \big) 
= \mathrm{Im}\big(\bigsqcup\limits_{(H, \chi)} (\tilde{\xi}_H)^{\#} \big),$
then we get a new 
cover of $\mathrm{Spec}(R^{\widehat{M}})$
\begin{align}\label{Cove112}
\bigsqcup\limits_{H} (\tilde{\xi}_H)^{\#} : 
\bigsqcup\limits_{H} \mathrm{Spec}(S_H) \twoheadrightarrow \mathrm{Spec}(R^{\widehat{M}}). 
\end{align}

\noindent{\it $\bm{Step}$ 3: We give a morphism 
$\mu^{\#}:  \mathrm{Spec}(R^{\widehat{M}}) \rightarrow \mathrm{Spec}
\big(\varphi(S)\big/ \sqrt{\varphi(S) \cap \Lambda  e \Lambda} \big)$ of schemes, 
and show that this morphism takes the cover (\ref{Cove112}) to 
the cover(\ref{Cove11234}) and is surjective.} 

In the following, denote by $\mu$ the following composition of natural algebra homomorphisms
$$
\varphi(S) \big/ \sqrt{\varphi(S) \cap (\Lambda e \Lambda)}  
\hookrightarrow  Z(\Lambda_{\mathrm{con}}) \twoheadrightarrow R^{\widehat{M}},
$$
the first induced by $\varphi$ and the second by quotient 
by the nilradical in $Z(\Lambda_{\mathrm{con}})$. 
Let $$\mu^{\#}: \mathrm{Spec}(R^{\widehat{M}}) 
\rightarrow \mathrm{Spec}\big( \varphi(S) \big/ 
\sqrt{\varphi(S) \cap (\Lambda e \Lambda)} \big)$$ be the homomorphism of schemes 
given by $\mu$. 
Now, by the definition of 
$\tilde{\xi}_H$ (see (\ref{DEpp})) 
and $\eqref{VaH}$, we have 
$\tilde{\xi}_H \circ \mu = \varphi_H$ 
for any pair $(H, W_H)$. Then we have the following commutative diagram:
$$
\xymatrixcolsep{2.5pc}
\xymatrixrowsep{2.5pc}
\xymatrix{ \bigsqcup\limits_{H} \mathrm{Spec}(S_H) 
\ar[rr]^{\bigsqcup\limits_{H} \varphi_H^{\#}} 
\ar[rd]_{\bigsqcup\limits_{H} (\tilde{\xi}_H)^{\#}} &&  
\mathrm{Spec}\big(\varphi(S) \big/ \sqrt{\varphi(S) \cap (\Lambda e \Lambda)} \big)  \\
& \mathrm{Spec}(R^{\widehat{M}}) \ar[ru]_{\mu^{\#}}.
}
$$
Since 
$\bigsqcup\limits_{H} \varphi_H^{\#}$ is a cover of 
$\mathrm{Spec}\big(\varphi(S) \big/ \sqrt{\varphi(S) \cap (\Lambda e \Lambda)} \big)$,
it implies that $\mu^{\#}$ is a surjection of schemes.

\indexspace
\noindent{\it $\bm{Step}$ 4: We show that $\mu^{\#}$ is injective.}

Since for any pair $(H, W_H)$, $\mu^{\#} \circ  (\tilde{\xi}_H)^{\#} = \varphi_H^{\#}$ is an injection, 
$\mu^{\#}|_{\mathrm{Im}\big((\tilde{\xi}_H)^{\#}\big)}$ is an 
injection when restricting $\mu^{\#}$ on $\mathrm{Im}\big((\tilde{\xi}_H)^{\#}\big)$. 
Note that $\bigsqcup\limits_{H} (\tilde{\xi}_H)^{\#}$ is a cover of $\mathrm{Spec}(R^{\widehat{M}})$.  
Hence, to prove that $\mu^{\#}$ is an injection of schemes, 
it suffices to show that for any two different pairs $(H, W_H)$ 
and $(H', W_{H'})$, and any $p \in \mathrm{Im}\big((\tilde{\xi}_H)^{\#}\big)$ and 
$p' \in \mathrm{Im}\big((\tilde{\xi}_{H'})^{\#}\big)$, 
$\mu^{\#}(p) = \mu^{\#}(p')$ if and only if 
$p = p'$ in $\mathrm{Spec}(R^{\widehat{M}})$. 

In fact, by Theorem \ref{Singlocus1} and Lemma \ref{KeyPr}, we have 
\begin{align*}
\bigcup\limits_{(H, W_H)} \mathrm{Spec}(S_{H}) 
\cong \sqrt{\mathrm{Sing}\big(\mathrm{Spec}(S)\big)} 
\cong \mathrm{Spec}\big(\varphi(S) / 
\sqrt{\varphi(S) \cap (\Lambda e \Lambda)} \big). 
\end{align*}
The composition of the above two isomorphisms 
coincides with 
$$
  \bigcup\limits_{H} 
\mathrm{Im}\big((\varphi_H)^{\#}\big) \cong 
\mathrm{Spec}\big(\varphi(S) / \sqrt{\varphi(S) \cap (\Lambda e \Lambda)} \big)
$$
induced by $\prod_{H} \varphi_H^{\#}$. 
Here, $\mathrm{Spec}(S_{H}) \cong \mathrm{Im}\big((\varphi_H)^{\#}\big)$ is in scheme 
$\sqrt{\mathrm{Sing}\big(\mathrm{Spec}(S)\big)}$ for any pair $(H, W_H)$. 
Then for any two different pairs $(H, W_H)$ 
and $(H', W_{H'})$, the intersection of $\mathrm{Im}\big(\varphi_H^{\#}\big)$ and $\mathrm{Im}\big(\varphi_{H'}^{\#}\big)$ in 
$\sqrt{\mathrm{Sing}\big(\mathrm{Spec}(S)\big)}$ is isomorphic to 
the intersection of $\mathrm{Spec}(S_{H})$ and 
$\mathrm{Spec}(S_{H'})$ in $\sqrt{\mathrm{Sing}\big(\mathrm{Spec}(S)\big)}$. 

Recall that $S_{H} = S / I_{H}$ and $S_H$ is associated to
the subscheme $W_H / (G/H)$ of $V/G$. 
In $\sqrt{\mathrm{Sing}\big(\mathrm{Spec}(S)\big)}$, 
the intersection of $\mathrm{Spec}(S_{H})$ and $\mathrm{Spec}(S_{H'})$ 
is given by the following canonical pullback commutative diagram: 
\begin{equation}\label{Dia77}
\begin{split}
\xymatrix{ 
\mathrm{Spec}(S/(I_H, I_{H'})) \ar[rr]  \ar[d] &&  \mathrm{Spec}(S/I_{H})  \ar[d] \\
\mathrm{Spec}(S/I_{H'}) \ar[rr] &&
\mathrm{Spec}(S). 
}
\end{split}
\end{equation}
Thus, 
the intersection of $\mathrm{Im}\big(\varphi_H^{\#}\big)$ and 
$\mathrm{Im}\big(\varphi_{H'}^{\#}\big)$ is isomorphic to 
\begin{align*}
\mathrm{Spec}\big(S / (I_H, I_H')\big) & \cong W_H / (G/H) \bigcap W_{H'} / (G/H') \\ 
& \cong (W_H \cap W_{H'}) / (G/G_{H,H'}) \\ 
& \cong \mathrm{Spec}(S_{G_{H, H'}}) 
\end{align*} 
in $\sqrt{\mathrm{Sing}\big(\mathrm{Spec}(S)\big)}$,
which is exactly $\mathrm{Im}\big(\varphi_{G_{H, H'}}^{\#}\big)$. 
Let $\tilde{p} \in \mathrm{Spec}(S_H)$ and $\tilde{p}' \in \mathrm{Spec}(S_{H'})$ such that 
$(\tilde{\xi}_H)^{\#}(\tilde{p}) = p$ and $(\tilde{\xi}_{H'})^{\#}(\tilde{p}') = p'$. 
If $\mu^{\#}(p) = \mu^{\#}(p')$, then 
$\varphi_H^{\#}(\tilde{p}) = \varphi_{H'}^{\#}(\tilde{p}')$. It implies that 
\begin{align}\label{INssse}
\mu^{\#}(p) = \mu^{\#}(p') \in \mathrm{Im}\big(\varphi_H^{\#}\big) 
\cap \mathrm{Im}\big(\varphi_{H'}^{\#}\big) = 
\mathrm{Im}\big(\varphi_{G_{H, H'}}^{\#}\big) \cong \mathrm{Spec}(S_{G_{H, H'}}).
\end{align}

In the meantime, by the commutative diagram 
\eqref{LemmaN} in Lemma \ref{Inters}, we know that  
$$
Z(\zeta_{(G_{H, H'}/H)}) \circ \tilde{\xi}_{H} 
= \tilde{\xi}_{G_{H, H'}}. 
$$
It implies that $Z(\zeta_{(G_{H, H'}/H)})$ is a surjection of algebras, since 
$\tilde{\xi}_{G_{H, H'}}$ is surjective. This induces an injection 
$$
\big( Z(\zeta_{(G_{H, H'}/H)}) \big)^{\#} : \mathrm{Im}
\big((\tilde{\xi}_{G_{H, H'}})^{\#}\big) \hookrightarrow 
\mathrm{Im}\big((\tilde{\xi}_{H})^{\#}\big) 
$$ of schemes.  
Meanwhile, there is a natural injection 
$$
 \mathrm{Im}\big((\varphi_{G_{H, H'}})^{\#}\big) \cong 
 W_{G_{H, H'}} \big/ (G/G_{H, H'}) \hookrightarrow W_{H} \big/ (G/H) \cong
\mathrm{Im}\big((\varphi_{H})^{\#}\big)
$$
of schemes. Furthermore, 
since $\varphi_H^{\#}$ is an injection, when $\mu^{\#}$ restricts on 
$\mathrm{Im}\big((\tilde{\xi}_{H})^{\#}\big)$, $\mu^{\#}$ is also an injection by 
$\mu^{\#} \circ (\tilde{\xi}_H)^{\#} = \varphi_H^{\#}$.  
Then $\mu^{\#}$ gives an isomorphism between $\mathrm{Im}\big((\tilde{\xi}_{H})^{\#}\big)$ 
and $\mathrm{Im}\big((\varphi_H)^{\#}\big)$. 
In the same way, $\mu^{\#}$ also 
gives an isomorphism between $\mathrm{Im}\big((\tilde{\xi}_{G_{H, H'}})^{\#}\big)$ 
and $\mathrm{Im}\big((\varphi_{G_{H, H'}})^{\#}\big)$.
By combining the above morphisms, we then get the following pullback commutative 
diagram:
\begin{equation}\label{Dia11}
\begin{split}
\xymatrix{ 
\mathrm{Im}\big((\tilde{\xi}_{G_{H, H'}})^{\#}\big) \ar[rr]^{\sim}_{ \mu^{\#}} \ar@{^{(}->}[d]_{\big( Z(\zeta_{(G_{H, H'}/H)}) \big)^{\#}} &&  
\mathrm{Im}\big(\varphi_{G_{H, H'}}^{\#}\big) \ar@{^{(}->}[d] \\
 \mathrm{Im}\big((\tilde{\xi}_H)^{\#}\big) \ar[rr]^{\sim}_{ \mu^{\#}}  && 
\mathrm{Im}\big(\varphi_{H}^{\#}\big). 
}
\end{split}
\end{equation}

Since $p  \in \mathrm{Im}\big((\tilde{\xi}_H)^{\#}\big)$ and 
$\mu^{\#}(p) \in \mathrm{Im}\big(\varphi_{G_{H, H'}}^{\#}\big)$ by (\ref{INssse}),  
we have $p \in \mathrm{Im}\big((\tilde{\xi}_{G_{H, H'}})^{\#}\big)$ by (\ref{Dia11}). 
In the same way, we also have 
$p' \in \mathrm{Im}\big((\tilde{\xi}_{G_{H, H'}})^{\#}\big)$.
Meanwhile, we know that  
$\mu^{\#}$ is also an injection when it restricts on 
$\mathrm{Im}\big((\tilde{\xi}_{G_{H, H'}})^{\#}\big)$. 
Thus, considering this injection $\mu^{\#}: \mathrm{Im}
\big((\tilde{\xi}_{G_{H, H'}})^{\#}\big) \hookrightarrow 
\mathrm{Spec}(R^{\widehat{M}})$, we obtain that
$p = p'$ since $\mu^{\#}(p) = \mu^{\#}(p')$. 

Summarizing the above Steps 1-4, we get
the theorem.
\end{proof}

Combining Theorem \ref{HatM} and Corollary \ref{cor:reducedloci}
we get the following theorem. 

\begin{theorem}\label{Theo22}
There is an algebra isomorphism $\varphi(S)/ 
\sqrt{\varphi(S) \cap (\Lambda e \Lambda)} \cong R^{\widehat{M}}$ 
induced from the map $\varphi: S \rightarrow \Lambda$. 
\end{theorem}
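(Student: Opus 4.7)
The plan is to read the theorem off directly from Theorem~\ref{HatM} together with Corollary~\ref{cor:reducedloci}. Chaining the two isomorphisms yields
$$
\mathrm{Spec}\bigl(\varphi(S)/\sqrt{\varphi(S)\cap(\Lambda e\Lambda)}\bigr)\;\cong\;\sqrt{\mathrm{Sing}\bigl(\mathrm{Spec}(S)\bigr)}\;\cong\;\mathrm{Spec}(R^{\widehat{M}})
$$
as reduced closed subschemes of $\mathrm{Spec}(S)$. Since a reduced closed subscheme of an affine scheme is determined by its defining radical ideal, both affine schemes on the outside correspond to a single radical ideal $J\subseteq S$, and hence both coordinate rings are canonically isomorphic to $S/J$ as quotients of $S$.

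To check that the resulting isomorphism is induced by $\varphi$, I would trace back through the $S$-algebra structures used in \S\ref{subsect:singularlocus}. The $S$-algebra structure on $R^{\widehat M}$ arises from the composition
$$
S\;\cong\;\varphi(S)\;\longrightarrow\;\Lambda\;\longrightarrow\;\Lambda_{\mathrm{con}}\;\supseteq\;Z(\Lambda_{\mathrm{con}})\;\twoheadrightarrow\;R^{\widehat M},
$$
so it is manifestly induced by $\varphi$, and the induced isomorphism $\varphi(S)/\sqrt{\varphi(S)\cap(\Lambda e\Lambda)}\to R^{\widehat M}$ is precisely the map $\mu$ constructed in Step~3 of the proof of Theorem~\ref{HatM}.

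The only subtlety is passing from the scheme-theoretic identification to an honest algebra isomorphism on the nose. Here I would argue that $\mu$ is injective by construction, because it factors through the inclusion $\varphi(S)/\sqrt{\varphi(S)\cap(\Lambda e\Lambda)}\hookrightarrow Z(\Lambda_{\mathrm{con}})$ followed by reduction modulo the nilradical, and the domain is already reduced so no nilpotents of $Z(\Lambda_{\mathrm{con}})$ meet its image nontrivially. Surjectivity then follows from the preceding paragraph, since both source and target are reduced $S$-algebra quotients of $S$ cutting out the same closed subset $\sqrt{\mathrm{Sing}(\mathrm{Spec}(S))}$ of $\mathrm{Spec}(S)$, hence both equal $S/J$. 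I do not anticipate any real obstacle, since all of the heavy lifting — constructing the covers, verifying bijectivity of $\mu^{\#}$, and organizing the decomposition indexed by pairs $(H,\chi)$ — has already been done in the proof of Theorem~\ref{HatM}; Theorem~\ref{Theo22} is in essence its ring-theoretic repackaging.
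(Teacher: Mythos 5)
Your proposal is correct and matches the paper's own proof, which is exactly ``combine Theorem~\ref{HatM} with Corollary~\ref{cor:reducedloci}''; your additional remarks on injectivity of $\mu$ (its kernel is the preimage of the nilradical of $Z(\Lambda_{\mathrm{con}})$, which is precisely $\sqrt{\varphi(S)\cap\Lambda e\Lambda}$) and on identifying the isomorphism with the map $\mu$ from Step~3 of the proof of Theorem~\ref{HatM} are the right way to see that the isomorphism is the one induced by $\varphi$.
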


\section{Proof of the main result}\label{ST}

In this section, we prove
Theorem \ref{main4}. 

For any Cohen-Macaulay $S$-module $M$, recall that 
$\Lambda^{M}_{\mathrm{con}} \cong 
\mathrm{End}_{\underline{\mathrm{CM}}(S)}(M)$ (see Proposition \ref{CML}).
Denote by $Z(\Lambda^{M}_{\mathrm{con}})$
the center of $\Lambda^{M}_{\mathrm{con}}$,
and denote by $R^{M}$ the reduced ring of 
$Z(\Lambda^{M}_{\mathrm{con}})$. 
Note that since $M$ is a finitely generated $S$-module,  
$\mathrm{End}_{S}(M)$ is also a finitely generated 
$S$-module. It implies that $\Lambda^{M}_{\mathrm{con}}$ and 
$R^{M}$ are both finitely generated $S$-modules. 
Therefore, $R^{M}$ is a commutative Noetherian ring since 
$S$ is a commutative Noetherian ring.

Our strategy to the proof of Theorem \ref{main4} is as follows.
\begin{enumerate}
\item[(1)] First, we show that $\mathcal{R}^S := \big\{R^{M}, 
\varphi^{M'}_{M} \big\}_{M, M' \in \underline{\mathrm{CM}}(S)}$,
depending only on 
the triangulated category $\underline{\mathrm{CM}}(S)$,
forms an inverse system;

\item[(2)] Then we prove that the inverse limit
is exactly $\varphi(S) / \sqrt{\varphi(S) \cap \Lambda e \Lambda}$. 

\item[(3)] Finally, since 
by Theorem \ref{Singlocus1}
the reduced ring of the singular locus of $S$ is
$\varphi(S) / \sqrt{\varphi(S) \cap \Lambda e \Lambda}$, it only depends on  
 $\underline{\mathrm{CM}}(S)$, and the theorem follows.
\end{enumerate}
Let us accomplish them step by step.

\subsection{An inverse system}
We first claim that there exists the following inverse system:  
$$
\mathcal{R}^S := \big\{R^{M}, \varphi^{M'}_{M} \big\}_{M, M' \in \underline{\mathrm{CM}}(S)},
$$
where $\varphi^{M'}_{M}$ is given as follows. 

Let $M'$ be a Cohen-Macaulay $S$-module such that $M$ is a direct summand of $M'$, i.e., 
$M' \cong M \oplus N$ for some 
$S$-module $N$. 
Then there is a natural $S$-module injection
$M \hookrightarrow M'$ and a natural $S$-module surjection $M' \twoheadrightarrow M$. 
It follows that there is an $S$-module surjection  
$$
\mathrm{Hom}_{S}(M', M') \twoheadrightarrow \mathrm{Hom}_{S}(M, M)
$$
given by composing with these two $S$-module homomorphisms. 
This $S$-module homomorphism naturally induces the $S$-module surjection   
$
\Lambda^{M'}_{\mathrm{con}} \twoheadrightarrow \Lambda^{M}_{\mathrm{con}}, 
$
which we denote by $\phi^{M'}_{M}$. 

In the meantime, by taking the centers of the algebras, we have 
\begin{align*}
Z(\Lambda^{M'}_{\mathrm{con}}) 
& \cong Z\big(\mathrm{End}_{\underline{\mathrm{CM}}(S)}(M')\big) 
\cong Z\big(\mathrm{End}_{\underline{\mathrm{CM}}(S)}(M \oplus N)\big) \\
& \cong \Big\{(f_1, f_2) \in Z\big(\mathrm{End}_{\underline{\mathrm{CM}}(S)}(M)\big) 
\oplus Z\big(\mathrm{End}_{\underline{\mathrm{CM}}(S)}(N)\big) 
\Big| \forall \, g \in \mathrm{Hom}_{\underline{\mathrm{CM}}(S)}(M, N) \, \& \\
&\quad\quad h \in \mathrm{Hom}_{\underline{\mathrm{CM}}(S)}(N, M),
\,\,  g \circ f_1 = f_2 \circ g, \, \& \, f_1 \circ h = h \circ f_2   \Big\}.
\end{align*}
Thus we obtain a natural $S$-algebra homomorphism 
$$\widetilde{\phi}^{M'}_{M}:Z(\Lambda^{M'}_{\mathrm{con}}) 
\rightarrow Z\big(\mathrm{End}_{\underline{\mathrm{CM}}(S)}(M)\big) 
\cong Z(\Lambda^{M}_{\mathrm{con}}).
$$ 
Moreover, it is easy to check that this $S$-algebra 
homomorphism gives the following commutative diagram of 
$S$-module homomorphisms:
$$
\xymatrixcolsep{4pc}
\xymatrix{ Z(\Lambda^{M'}_{\mathrm{con}}) \ar@{^{(}->}[r] 
\ar[d]_{\widetilde{\phi}^{M'}_{M}} & \Lambda^{M'}_{\mathrm{con}} 
\ar[d]^{\phi^{M'}_{M}} \\
Z(\Lambda^{M}_{\mathrm{con}}) \ar@{^{(}->}[r] 
& \Lambda^{M}_{\mathrm{con}}. 
} 
$$
From the $S$-algebra homomorphism 
$\widetilde{\phi}^{M'}_{M}$, 
by taking the corresponding reduced rings, we get the  
$S$-algebra homomorphism $R^{M'} \twoheadrightarrow R^{M}$, 
which we denote by $\varphi^{M'}_{M}$.

\begin{lemma}
$
\mathcal{R}^S:= \big\{R^{M}, \varphi^{M'}_{M} \big\}_{M, M' \in \underline{\mathrm{CM}}(S)}
$ is an inverse system.  
\end{lemma}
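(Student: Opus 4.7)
The plan is to verify the two standard axioms of an inverse system for $\mathcal{R}^S$, namely that $\varphi^M_M = \mathrm{id}_{R^M}$ and that the transitivity relation $\varphi^{M''}_M = \varphi^{M'}_M \circ \varphi^{M''}_{M'}$ holds whenever $M$ is a direct summand of $M'$ and $M'$ is a direct summand of $M''$. Directedness of the index set comes for free, since for any two objects $M_1, M_2 \in \underline{\mathrm{CM}}(S)$ their direct sum $M_1 \oplus M_2$ is again Cohen--Macaulay and contains each $M_i$ as a direct summand. Reflexivity $\varphi^M_M = \mathrm{id}_{R^M}$ is immediate by choosing the zero complement in the construction.

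For transitivity I would fix compatible decompositions $M' = M \oplus N$ and $M'' = M' \oplus P$, giving $M'' = M \oplus N \oplus P$, and describe $\phi^{M''}_M$ and $\phi^{M'}_M$ in $3 \times 3$ and $2 \times 2$ block-matrix form respectively. Extracting the $M$-diagonal block of an endomorphism of $M''$ clearly factors through extracting the $M'$-block first and then the $M$-block, so $\phi^{M''}_M = \phi^{M'}_M \circ \phi^{M''}_{M'}$ already at the level of $\mathrm{End}_S(-)$. Each of the three maps sends a homomorphism factoring through a projective $S$-module to one factoring through a projective $S$-module, so the identity descends to the contraction algebras $\Lambda^{(-)}_{\mathrm{con}}$. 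Applying the functors $Z(-)$ (center) and $\sqrt{\,\cdot\,}$ (reduced ring), both of which are functorial on algebra homomorphisms, then transfers the identity to $\widetilde{\phi}^{M''}_M = \widetilde{\phi}^{M'}_M \circ \widetilde{\phi}^{M''}_{M'}$ and finally to $\varphi^{M''}_M = \varphi^{M'}_M \circ \varphi^{M''}_{M'}$, as required.

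The main point needing care is that $\widetilde{\phi}^{M'}_M$, and hence $\varphi^{M'}_M$, is intrinsic to the summand inclusion and does not depend on the choice of complement $N$. This is exactly what the explicit description of $Z(\Lambda^{M'}_{\mathrm{con}})$ in the excerpt as pairs $(f_1, f_2)$ satisfying $g f_1 = f_2 g$ and $f_1 h = h f_2$ for all $g \in \mathrm{Hom}_{\underline{\mathrm{CM}}(S)}(M, N)$ and $h \in \mathrm{Hom}_{\underline{\mathrm{CM}}(S)}(N, M)$ is designed to handle: any two complements of $M$ in $M'$ differ by a stable automorphism of $M'$ fixing $M$, and a central element commutes with such automorphisms, so its $M$-component is unambiguous. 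Once this independence is in place, the remaining verifications are mechanical, and $\mathcal{R}^S$ is genuinely an inverse system indexed by the directed poset of CM modules under direct-summand inclusion.
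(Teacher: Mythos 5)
Your proof takes essentially the same route as the paper: establish $\phi^{M''}_M = \phi^{M'}_M \circ \phi^{M''}_{M'}$ at the level of (stable) endomorphism algebras by composing with the obvious inclusions and projections of summands, then descend to the centers and finally to the reduced rings. The paper proves the descent to centers a little differently than you phrase it: rather than invoking ``functoriality of $Z(-)$'' (which is not a valid principle in general, and in fact $\phi^{M'}_M$ itself is only an $S$-module surjection, not an algebra homomorphism), the paper observes that $Z(\Lambda^{(-)}_{\mathrm{con}}) \hookrightarrow \Lambda^{(-)}_{\mathrm{con}}$ is injective and that $\widetilde{\phi}^{M'}_M$ is, by construction, the restriction of $\phi^{M'}_M$ to the centers; hence the cocycle identity at the $\Lambda_{\mathrm{con}}$ level pulls back along these injections to give the identity for $\widetilde{\phi}$. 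Your final paragraph, which appeals to the explicit description of $Z(\Lambda^{M'}_{\mathrm{con}})$ as compatible pairs, is doing the same work and is the correct justification that the restriction lands in the center and is an algebra map; just replace the ``$Z(-)$ is functorial'' phrasing by the injectivity-of-centers argument and the proof matches the paper's.
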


\begin{proof}
We only need to show that $\{\varphi_{M}^{M'}\}$ 
defined above satisfies the cocycle condition.
In fact, to show $\varphi^{M''}_{M}
= \varphi^{M'}_{M}\circ \varphi^{M''}_{M'}$, 
it is enough to show
$$
\widetilde{\phi}^{M''}_{M} 
= \widetilde{\phi}^{M'}_{M} \circ \widetilde{\phi}^{M''}_{M'},
$$ 
for any $M, M', M''$ such that $M$ is a direct summand of $M'$ 
and $M'$ is a direct summand of $M''$.

In fact, notice that
we have the following two commutative diagrams of 
$S$-module homomorphisms:
$$
\xymatrixcolsep{2pc}
\xymatrix{ Z(\Lambda^{M''}_{\mathrm{con}}) \ar@{^{(}->}[rr] 
\ar[d]^{\widetilde{\phi}^{M''}_{M'}} 
&& \Lambda^{M''}_{\mathrm{con}} \ar[d]^{\phi^{M''}_{M'}} 
&&\Lambda^{M''}_{\mathrm{con}} \ar[d]^{\phi^{M''}_{M'}} 
\ar@/^1.4cm/[dd]^{\phi_{M}^{M''}}\\
Z(\Lambda^{M'}_{\mathrm{con}}) \ar@{^{(}->}[rr] \ar[d]^{\widetilde{\phi}^{M'}_{M}} 
&& \Lambda^{M'}_{\mathrm{con}} \ar[d]^{\phi^{M'}_{M}} 
&\mbox{and}& \Lambda^{M'}_{\mathrm{con}}\ar[d]^{\phi^{M'}_{M}}\\
Z(\Lambda^{M}_{\mathrm{con}}) \ar@{^{(}->}[rr] && \Lambda^{M}_{\mathrm{con}}&& 
\Lambda^{M}_{\mathrm{con}}
 } 
$$
Since in the left diagram the horizontal morphisms
are all injective, 
combining it with the right commutative diagram we then get
$\widetilde{\phi}^{M''}_{M} = \widetilde{\phi}^{M'}_{M} \circ \widetilde{\phi}^{M''}_{M'}$. 
\end{proof}

Now set
$\widehat{M} = \bigoplus\limits_{W \in \hat{G}}(W \otimes R)^G$ as before. 
In the next two subsections, we
are going to show that $R^{\widehat M} \cong \varphi(S) / \sqrt{\varphi(S) \cap \Lambda e \Lambda}$ is 
the inverse limit of $\mathcal R^S$.

\subsection{The universal algebra homomorphism}

In this subsection, we show that for
any Cohen-Macaulay $S$-module $M$, 
there is a (universal) algebra homomorphism 
$$\varphi_{M}:\varphi(S) / \sqrt{\varphi(S) \cap \Lambda e \Lambda } \rightarrow R^{M}.$$
We shall also prove a technical result (see Proposition \ref{Cl22}), 
which says that for any Cohen-Macaulay $S$-module $N$, 
$\varphi_{\widehat{M}}$ can be extended to a new 
algebra isomorphism $\varphi_{\widehat{M}(l, l') \oplus N}$ for some integers $l' > 0 > l$.

To this end, notice first that we have a natural algebra homomorphism  
\begin{align}\label{Modell}
\phi_M:
S \rightarrow \mathrm{End}_{\underline{\mathrm{CM}}(S)}(M),
\end{align}
given by the $S$-module structure of $M$. It is obvious that 
the image of \eqref{Modell}
 is in the center of 
$\mathrm{End}_{\underline{\mathrm{CM}}(S)}(M)$. 
By slightly abusing the notations, we still denote this morphism by 
$\phi_M: S \rightarrow Z\big(\mathrm{End}_{\underline{\mathrm{CM}}(S)}(M)\big)$.
Next, from the proof of Theorem \ref{Singlocus1}, the support of the $S$-module 
$\mathrm{End}_{\underline{\mathrm{CM}}(S)}(M)$ is contained in the singular locus of 
$\mathrm{Spec}(S)$.  
When identifying $S$ with $\varphi(S)$, 
we have 
$$
\mathrm{End}_{\underline{\mathrm{CM}}(S)}(M) 
\circ \big(\varphi(S) \cap \Lambda e \Lambda \big)  =\{ 0\},
$$
where $\circ$ means the right action given by the $S$-module structure on $\mathrm{End}_{\underline{\mathrm{CM}}(S)}(M)$;
this is because by Theorem \ref{Singlocus1},
$\mathrm{Spec}\big( \varphi(S) \big/ \big(\varphi(S) 
\cap \Lambda e \Lambda \big) \big)$ is the singular locus of 
$\mathrm{Spec}(S)$. 
Hence, 
$$
Z\big(\mathrm{End}_{\underline{\mathrm{CM}}(S)}(M)\big) \circ
\big(\varphi(S) \cap \Lambda e \Lambda \big)  =\{0\}.
$$ 
Thus, $\phi_M$ induces the following well-defined algebra homomorphism 
$$
\widetilde{\phi}_M: \varphi(S) \big/ \big(\varphi(S) \cap \Lambda e \Lambda \big) 
\rightarrow Z\big(\mathrm{End}_{\underline{\mathrm{CM}}(S)}(M)\big).
$$
Taking the reduced rings of the above algebras, 
we get the following algebra homomorphism
$$\varphi_M:
\varphi(S) \big/ \sqrt{\varphi(S) \cap \Lambda e \Lambda} \rightarrow R^{M}.
$$
Note that in the special case $M=\widehat{M}$, 
$\varphi_{\widehat{M}}$ is the algebra isomorphism given by Theorem \ref{Theo22}. 
$\varphi_M$ thus defined satisfies the following.

\begin{lemma}\label{Lefttt}
For any two
Cohen-Macaulay $S$-modules $M$ and $M'$ such that $M$ is a direct summand of $M'$,
$$\varphi_M = \varphi^{M'}_{M} \circ \varphi_{M'}.$$
\end{lemma}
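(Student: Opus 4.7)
The plan is a diagram chase at the level of the underlying (un-reduced) algebras, followed by functoriality when passing to the quotients by $\varphi(S)\cap\Lambda e\Lambda$ and to the reduced rings. Everything is driven by one basic observation: if $M'\cong M\oplus N$, then for $s\in S$ the element $\phi_{M'}(s)\in Z\big(\mathrm{End}_{\underline{\mathrm{CM}}(S)}(M')\big)$ is multiplication by $s$ and therefore splits as the pair $(\phi_M(s),\phi_N(s))$ under the identification of $Z\big(\mathrm{End}_{\underline{\mathrm{CM}}(S)}(M\oplus N)\big)$ given in the construction of $\widetilde{\phi}^{M'}_M$.

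First I would verify the following triangle of $S$-algebra maps commutes:
$$
\xymatrix{
S \ar[r]^-{\phi_{M'}} \ar[rd]_-{\phi_M} & Z\big(\mathrm{End}_{\underline{\mathrm{CM}}(S)}(M')\big) \ar[d]^-{\widetilde{\phi}^{M'}_M} \\
 & Z\big(\mathrm{End}_{\underline{\mathrm{CM}}(S)}(M)\big).
}
$$
Commutativity is immediate from the explicit description of $\widetilde{\phi}^{M'}_M$ recalled just before the lemma: a central element $(f_1,f_2)\in Z(\mathrm{End}(M))\oplus Z(\mathrm{End}(N))$ is sent to $f_1$, and since $\phi_{M'}(s)=(\phi_M(s),\phi_N(s))$ under this identification, we get $\widetilde{\phi}^{M'}_M(\phi_{M'}(s))=\phi_M(s)$.

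Next I would pass to the quotient by $\varphi(S)\cap\Lambda e\Lambda$. Both $\phi_M$ and $\phi_{M'}$ were shown (in the paragraph preceding the definition of $\varphi_M$) to annihilate $\varphi(S)\cap\Lambda e\Lambda$, so they descend to $\widetilde{\phi}_M$ and $\widetilde{\phi}_{M'}$ respectively. Because $\widetilde{\phi}^{M'}_M$ is $S$-linear, the triangle above descends to a commuting triangle
$$
\xymatrix{
\varphi(S)\big/\big(\varphi(S)\cap\Lambda e\Lambda\big) \ar[r]^-{\widetilde{\phi}_{M'}} \ar[rd]_-{\widetilde{\phi}_M} & Z(\Lambda^{M'}_{\mathrm{con}}) \ar[d]^-{\widetilde{\phi}^{M'}_M} \\
 & Z(\Lambda^{M}_{\mathrm{con}}).
}
$$

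Finally, applying the reduction functor $\sqrt{(-)}$ to every term is functorial (any algebra map sends nilpotents to nilpotents, hence descends to the reduced quotients), and $\sqrt{\varphi(S)/(\varphi(S)\cap\Lambda e\Lambda)}=\varphi(S)/\sqrt{\varphi(S)\cap\Lambda e\Lambda}$, $\sqrt{Z(\Lambda^{M}_{\mathrm{con}})}=R^M$, and similarly for $M'$. The induced maps are by definition $\varphi_M$, $\varphi_{M'}$ and $\varphi^{M'}_M$, so the triangle becomes $\varphi_M=\varphi^{M'}_M\circ\varphi_{M'}$, as desired. No step poses a real obstacle; the only thing to be careful about is verifying that $\widetilde{\phi}^{M'}_M$ restricted to the image of $\phi_{M'}$ really is the projection to the first coordinate under the explicit description of $Z(\Lambda^{M'}_{\mathrm{con}})$, which is immediate but worth spelling out to keep the bookkeeping clean.
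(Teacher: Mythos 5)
Your proof is correct and follows essentially the same route as the paper's own argument: both establish $\phi_M = \widetilde{\phi}^{M'}_M \circ \phi_{M'}$ at the level of (centers of) $\mathrm{End}_{\underline{\mathrm{CM}}(S)}$ — you do this by explicitly tracking that $\phi_{M'}(s)$ corresponds to the pair $(\phi_M(s),\phi_N(s))$ and that $\widetilde{\phi}^{M'}_M$ projects to the first coordinate, while the paper phrases it as $\phi_M = \phi^{M'}_M \circ \phi_{M'}$ and then takes centers — and then both descend through the quotient by $\varphi(S)\cap\Lambda e\Lambda$ and the passage to reduced rings. The only minor difference is bookkeeping: the paper appeals to the surjectivity of $S \to \varphi(S)/(\varphi(S)\cap\Lambda e\Lambda)$ to transfer the identity, whereas you invoke functoriality of the reduction step directly; both are valid and amount to the same computation.
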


\begin{proof}
Recalling that
$$
\phi^{M'}_{M}:
\mathrm{End}_{\underline{\mathrm{CM}}(S)}(M') \twoheadrightarrow 
\mathrm{End}_{\underline{\mathrm{CM}}(S)}(M)
$$ 
is the canonical projection,
we have $\phi_M = \phi^{M'}_{M} \circ \phi_{M'}$ by definition (see (\ref{Modell})). 
By taking the centers of algebras, and since the images of $\phi_M$ and $\phi_{M'}$ are both in the
centers respectively, 
we have  
$$\phi_M = \widetilde{\phi}^{M'}_{M} \circ \phi_{M'}.$$  
Since both centers are
supported on the singular locus $\mathrm{Spec}\big( \varphi(S) 
/ \sqrt{\varphi(S) \cap (\Lambda e \Lambda)}\big)$ 
of $ \mathrm{Spec}(S)$ (see the proof of Theorem \ref{Singlocus1}), 
$\phi_M$ and $\phi_{M'}$ induce the following algebra  homomorphisms 
$$
\widetilde{\phi}_M:
\varphi(S) \big/ \big(\varphi(S) \cap \Lambda e 
\Lambda \big) \rightarrow Z\big( \mathrm{End}_{\underline{\mathrm{CM}}(S)}(M) \big)
$$
and 
$$\widetilde{\phi}_{M'}:
\varphi(S) \big/ \big(\varphi(S) \cap \Lambda e \Lambda \big) 
\rightarrow Z\big( \mathrm{End}_{\underline{\mathrm{CM}}(S)}(M') \big).
$$ 
Moreover, the natural algebra homomorphism 
$$
S \rightarrow \varphi(S) \big/ \big(\varphi(S) \cap \Lambda e \Lambda \big),
$$
induced by $\varphi$, is a surjection. In summary, 
we have the following commutative diagram:  
$$\xymatrix{
&S\ar[d]\ar@/_0.4cm/[ldd]_{\phi_{M'}}\ar@/^0.4cm/[rdd]^{\phi_{M}}&\\
& \varphi(S) / \big(\varphi(S) \cap \Lambda e \Lambda \big)
\ar[ld]_{\tilde\phi_{M'}}\ar[rd]^{\tilde\phi_{M}}&\\
Z\big( \mathrm{End}_{\underline{\mathrm{CM}}(S)}(M') \big)\ar[rr]^{\widetilde{\phi}^{M'}_{M}}
&& Z\big( \mathrm{End}_{\underline{\mathrm{CM}}(S)}(M)
}$$
Then, by $\phi_M = \widetilde{\phi}^{M'}_{M}\circ \phi_{M'}$,   
we have $\widetilde{\phi}_M = 
\widetilde{\phi}^{M'}_{M} \circ \widetilde{\phi}_{M'}$,
since $S \rightarrow \varphi(S) / \big(\varphi(S) \cap 
\Lambda e \Lambda \big)$ is surjective.
Taking the reduced rings for these rings, 
we get $\varphi_M = \varphi^{M'}_{M} \circ \varphi_{M'}$.
\end{proof}

Next, we introduce a 
result (Proposition \ref{Cl22}), which will be to be used in next section. 
The background is that, for a Cohen-Macauley module $N$,
we expect to have an isomorphism $R^{\widehat M\oplus N}\cong R^{\widehat M}$;
however, at present we are not
able to show this isomorphism. The idea to solve this issue is that, 
we may find two integers $l, l'$ and consider
$\widehat M(l, l'):=\bigoplus_{i=l}^{l'}\widehat M[i]$, and then 
we have: (a) $R^{\widehat M}\cong R^{\widehat M(l, l')}$
and (b)
$R^{\widehat M(l, l')\oplus N}\cong R^{\widehat M}$.
On the other hand, 
considering $\widehat M(l, l')$ instead of $\widehat M$ is natural in the
sense that $\widehat M$ is a generator of
$\underline{\mathrm{CM}}(S)$, where the degree shifts appear naturally.

\begin{proposition}\label{Cl22}
For any 
Cohen-Macaulay $S$-module $N$, there exist $l, l'\in\mathbb Z$ with $l'>0>l$,
and a commutative diagram 
$$
\xymatrixcolsep{2.5pc}
\xymatrixrowsep{2.5pc}
\xymatrix{ \varphi(S) \big/ \sqrt{\varphi(S) \cap \Lambda e \Lambda} 
\ar[rr]^-{\varphi_{\widehat{M}}} \ar[rd]_-{\varphi_{\widehat{M}(l, l') \oplus N}} 
&&  R^{\widehat{M}} \\ 
& R^{\widehat{M}(l, l') \oplus N} \ar[ru]_-{\varphi_{\widehat{M}}^{\widehat{M}(l, l') \oplus N}} &
}
$$
of algebra isomorphisms.  
\end{proposition}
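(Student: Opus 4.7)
The plan is as follows. First, since $\Lambda=\mathrm{End}_{S}(\widehat{M})$ is an NCR (Example \ref{keyexample}), it has finite global dimension, so $\Lambda$ is a classical generator of $D^{b}(\Lambda)$. Via the Morita-type localization $(-)\otimes^{\mathbb L}_{\Lambda}\Lambda e$ already used in the proof of Lemma \ref{Local}, $\widehat{M}$ becomes a classical generator of $D^{b}(S)$, and Proposition \ref{GeneToSing} then yields that $\widehat{M}$ is a classical generator of $D_{sg}(S)\cong\underline{\mathrm{CM}}(S)$. Consequently, for the given CM module $N$ there exists $m\in\mathbb N$ with $N\in\langle\widehat{M}\rangle_{m}$; unfolding the definition of $\langle-\rangle_{m}$, only finitely many shifts $\widehat{M}[i]$ can occur in building $N$, so I may choose $l,l'\in\mathbb Z$ with $l'>0>l$ so that $N\in\langle\widehat{M}(l,l')\rangle_{m}$ and simultaneously $\widehat{M}=\widehat{M}[0]$ is a direct summand of $\widehat{M}(l,l')$.

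With this choice, $\widehat{M}$ is a direct summand of $\widehat{M}(l,l')\oplus N$, so Lemma \ref{Lefttt} gives the commutativity
$$
\varphi_{\widehat{M}}\;=\;\varphi_{\widehat{M}}^{\widehat{M}(l,l')\oplus N}\circ\varphi_{\widehat{M}(l,l')\oplus N}.
$$
Since $\varphi_{\widehat{M}}$ is an isomorphism by Theorem \ref{Theo22}, this formal identity already forces $\varphi_{\widehat{M}(l,l')\oplus N}$ to be injective and $\varphi_{\widehat{M}}^{\widehat{M}(l,l')\oplus N}$ to be surjective. It remains to verify that $\varphi_{\widehat{M}}^{\widehat{M}(l,l')\oplus N}$ is \emph{injective}, because this automatically upgrades both maps to isomorphisms.

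For the injectivity, I would localize at each prime $\mathfrak{p}$ in $\mathrm{Sing}\big(\mathrm{Spec}(S)\big)$; outside the singular locus both reduced centers localize to zero. By Lemma \ref{Local} and the same NCR argument applied locally, $\widehat{M}_{\mathfrak{p}}$ is a classical generator of $\underline{\mathrm{CM}}(S_{\mathfrak{p}})$. Writing $M':=\widehat{M}(l,l')\oplus N=\widehat{M}\oplus N'$, an element of the localized kernel is represented by a central endomorphism of the form $(0,g)$ lying in the center of $\mathrm{End}_{\underline{\mathrm{CM}}(S_{\mathfrak{p}})}(\widehat{M}_{\mathfrak{p}})\oplus\mathrm{End}_{\underline{\mathrm{CM}}(S_{\mathfrak{p}})}(N'_{\mathfrak{p}})$ with the property that $g\circ h=0$ and $k\circ g=0$ for every stable morphism $h:\widehat{M}_{\mathfrak{p}}\to N'_{\mathfrak{p}}$ and $k:N'_{\mathfrak{p}}\to\widehat{M}_{\mathfrak{p}}$. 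Since $N'_{\mathfrak{p}}$ lies in the thick subcategory classically generated by $\widehat{M}_{\mathfrak{p}}$, I would induct on the filtration length, using the long exact Hom-sequences of the defining distinguished triangles to propagate the vanishing of $g$, up to finitely many connecting-morphism factors. The outcome is that $g$ is nilpotent, hence vanishes in the reduced ring $R^{\widehat{M}(l,l')\oplus N}$, giving the required injectivity.

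The main obstacle is precisely this nilpotency induction: one must show that a stable central endomorphism of a classically generated CM module which annihilates all stable homs to and from the generator is itself nilpotent. This is where the classical generator (rather than merely generator) hypothesis is essential, for it supplies a bounded filtration on $N'$ that lets the induction terminate; the Gorenstein property of $S$ ensures that $\underline{\mathrm{CM}}(S_{\mathfrak{p}})$ is a genuine triangulated category so that the triangle-based induction is valid. Once injectivity is verified, the commutativity above upgrades both $\varphi_{\widehat{M}(l,l')\oplus N}$ and $\varphi_{\widehat{M}}^{\widehat{M}(l,l')\oplus N}$ to isomorphisms, and the proposition follows.
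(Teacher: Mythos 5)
Your skeleton matches the paper's: use classical generation of $\widehat{M}$ to pick $l,l'$, invoke Lemma \ref{Lefttt} to get the factorization $\varphi_{\widehat{M}}=\varphi_{\widehat{M}}^{\widehat{M}(l,l')\oplus N}\circ\varphi_{\widehat{M}(l,l')\oplus N}$, observe from Theorem \ref{Theo22} that one remaining injectivity (equivalently, surjectivity) statement finishes the job, and reduce that to a nilpotency claim about central stable endomorphisms.

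The gap is in the nilpotency step. You split $M'=\widehat{M}\oplus N'$ with $N'=\bigl(\bigoplus_{i\in[l,l'],\,i\neq0}\widehat{M}[i]\bigr)\oplus N$ and propose to ``induct on the filtration length'' of $N'$ in $\langle\widehat{M}\rangle_m$. But the shift summands $\widehat{M}[i]$ with $i\neq0$ already lie in $\langle\widehat{M}\rangle_1$, so their filtration is trivial and the long exact Hom-sequence induction gives no information about the components of $g$ sitting on them; moreover, the centrality condition from $(0,g)$ only kills compositions with morphisms to and from $\widehat{M}[0]$, not morphisms to $\widehat{M}[i]$ for $i\neq0$, which is what a Lemma \ref{Conne}-style induction on the filtration of $N$ would need. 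The paper splits $M'$ the other way, as $\widehat{M}(l,l')\oplus N$, and disposes of the shift block \emph{separately}: Lemma \ref{Conne2} establishes $R^{\widehat{M}}\cong R^{\widehat{M}(l,l')}$ by an iterated argument with the syzygy functor $\Omega$ and the translation equivalence, not by a filtration induction; only afterwards is the filtration argument (Lemma \ref{Conne}) applied, and only to the $N$-component $f_{1,2}$. Your sketch omits the analogue of Lemma \ref{Conne2} entirely, so as written it does not close. Two smaller slips: a kernel representative in $Z\bigl(\mathrm{End}_{\underline{\mathrm{CM}}(S)}(M')\bigr)$ has the form $(\varepsilon,g)$ with $\varepsilon$ merely nilpotent, not $(0,g)$, and one must first pass to a power with $\varepsilon^n=0$ before the vanishing statements you use even hold; and the localization at primes of $\mathrm{Sing}(\mathrm{Spec}(S))$ is unnecessary — the paper argues globally — and silently adds the burden of verifying that $Z(-)$ and $\sqrt{-}$ commute with $(-)_{\mathfrak{p}}$.
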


The proof is postponed to Appendix \ref{App:E}.

\subsection{The limit of the inverse system}

Now we are ready to show the main result of this section.

\begin{theorem}\label{kkeyl}
The inverse limit of $\mathcal{R}^S$ is 
$$\big(\varphi(S) \big/ \sqrt{\varphi(S) \cap \Lambda e \Lambda }, \{\varphi_{M}\}_{M \in \underline{\mathrm{CM}}(S)} \big).$$ 
\end{theorem}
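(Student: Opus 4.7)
The plan is to verify the two defining properties of an inverse limit: that the pair $(\varphi(S)/\sqrt{\varphi(S)\cap\Lambda e\Lambda}, \{\varphi_M\})$ is a compatible cone over $\mathcal{R}^S$, and that it is terminal among such cones. The compatibility of the family $\{\varphi_M\}$ with the transition maps $\{\varphi^{M'}_M\}$ is exactly the content of Lemma \ref{Lefttt}, so the first property is already available. The work lies in verifying the universal property.

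Suppose then that $(T, \{\psi_M\}_{M \in \underline{\mathrm{CM}}(S)})$ is any other compatible cone over $\mathcal{R}^S$, i.e., $\psi_M = \varphi^{M'}_M \circ \psi_{M'}$ whenever $M$ is a direct summand of $M'$. The key input is Theorem \ref{Theo22}, which identifies $\varphi_{\widehat M}$ as an algebra isomorphism. Using this, I would define
\[
\alpha := \varphi_{\widehat M}^{-1} \circ \psi_{\widehat M} \;:\; T \longrightarrow \varphi(S)/\sqrt{\varphi(S)\cap\Lambda e\Lambda}.
\]
Uniqueness is immediate: any factorization $\alpha'$ must satisfy $\varphi_{\widehat M} \circ \alpha' = \psi_{\widehat M}$, and since $\varphi_{\widehat M}$ is an isomorphism, $\alpha'$ must coincide with $\alpha$.

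For the compatibility $\varphi_M \circ \alpha = \psi_M$ for an arbitrary Cohen-Macaulay module $M$, I would apply Proposition \ref{Cl22} with $N = M$ to obtain integers $l < 0 < l'$ so that the module $\widehat M(l,l') \oplus M$ makes both $\varphi_{\widehat M(l,l') \oplus M}$ and $\varphi^{\widehat M(l,l') \oplus M}_{\widehat M}$ isomorphisms (the latter being $\varphi_{\widehat M} \circ \varphi_{\widehat M(l,l')\oplus M}^{-1}$ via Lemma \ref{Lefttt}). Since both $\widehat M$ and $M$ are direct summands of $\widehat M(l,l') \oplus M$, the relations
\[
\psi_{\widehat M} = \varphi^{\widehat M(l,l')\oplus M}_{\widehat M}\circ \psi_{\widehat M(l,l')\oplus M},\qquad
\psi_M = \varphi^{\widehat M(l,l')\oplus M}_M \circ \psi_{\widehat M(l,l')\oplus M}
\]
give $\psi_M = \varphi^{\widehat M(l,l')\oplus M}_M \circ (\varphi^{\widehat M(l,l')\oplus M}_{\widehat M})^{-1} \circ \psi_{\widehat M}$. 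On the other hand, Lemma \ref{Lefttt} applied to $\widehat M$ and $M$ as direct summands of $\widehat M(l,l')\oplus M$ expresses $\varphi_M$ as $\varphi^{\widehat M(l,l')\oplus M}_M \circ (\varphi^{\widehat M(l,l')\oplus M}_{\widehat M})^{-1} \circ \varphi_{\widehat M}$, and substituting into $\varphi_M \circ \alpha$ yields $\psi_M$ after cancellation. This checks the cone condition and completes the universal property.

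The step I expect to be the main obstacle is the last one: the intermediate module $\widehat M(l,l')\oplus M$ provided by Proposition \ref{Cl22} depends on $M$ (the shift parameters $l, l'$ vary), so one must be careful to use it only as an auxiliary device for each fixed $M$, rather than as a single module mediating the entire diagram. Once this is set up properly, all computations reduce to straightforward diagram chasing using Lemma \ref{Lefttt} and the fact that the maps $\varphi^{\widehat M(l,l')\oplus M}_{\widehat M}$ and $\varphi_{\widehat M(l,l')\oplus M}$ are invertible.
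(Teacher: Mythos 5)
Your proposal is correct and follows essentially the same approach as the paper's proof: defining $\eta_T = \varphi_{\widehat M}^{-1}\circ\psi_{\widehat M}$, invoking Proposition \ref{Cl22} with $N=M$ to produce the auxiliary module $\widehat M(l,l')\oplus M$ with the three invertible $\varphi$'s, and then chasing through Lemma \ref{Lefttt} and the cone compatibility of $\{\psi_M\}$ to verify $\varphi_M\circ\eta_T=\psi_M$, with uniqueness handled by the same one-line argument. The minor algebraic rearrangement you use (expressing $\varphi_{\widehat M(l,l')\oplus M}$ in terms of $\varphi_{\widehat M}$ rather than the other way around) is equivalent, and your closing caution about the shift parameters depending on $M$ is exactly right but, as you note, harmless since the auxiliary module is used only locally for each fixed $M$.
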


\begin{proof}
To prove this theorem, we only need
to show the universal property of 
$$\big(\varphi(S) \big/ \sqrt{\varphi(S) \cap \Lambda e \Lambda }, \{\varphi_{M}\}_{M \in \underline{\mathrm{CM}}(S)} \big).$$
That is, for any 
$\big(T, \{\psi_{M}\}_{(M \in \underline{\mathrm{CM}}(S))} \big)$, where 
$\psi_M : T \rightarrow R^M$,  
satisfying that if $M$ is a direct summand of $M'$ as $S$-modules and
$\psi_M = \varphi^{M'}_{M} \circ \psi_{M'}$,
there is an unique algebra homomorphism $\eta_{T}: 
T \rightarrow \varphi(S) \big/ \sqrt{\varphi(S) \cap \Lambda e \Lambda }$ such that 
\begin{align}\label{FFFinl888}
\psi_M = \varphi_M \circ \eta_T.
\end{align} 

To this end, define $\eta_T$ to be  
$(\varphi_{\widehat{M}})^{-1} \circ \psi_{\widehat{M}}$. 
Let $N$ be an arbitrary Cohen-Macaulay $S$-module. 
Note that by Proposition \ref{Cl22}, $\varphi_{\widehat{M}}$, 
$\varphi_{\widehat{M}(l, l') \oplus N}$ and 
$\varphi_{\widehat{M}}^{\widehat{M}(l, l') \oplus N}$ are all invertible.   
We then have 
\begin{align*}
\varphi_N \circ \eta_T & = \varphi_N \circ \big( (\varphi_{\widehat{M}})^{-1} 
\circ \psi_{\widehat{M}} \big) \\
& = \big( \varphi_{N}^{\widehat{M}(l, l') \oplus N} \circ \varphi_{\widehat{M}(l, l') \oplus N}  
\big)\circ \big( (\varphi_{\widehat{M}})^{-1} \circ \psi_{\widehat{M}} \big) \\
& = \big( \varphi_{N}^{\widehat{M}(l, l') \oplus N} \circ \varphi_{\widehat{M}(l, l') \oplus N}  
\big)\circ \big( (\varphi_{\widehat{M}(l, l') \oplus N})^{-1} \circ  
( \varphi_{\widehat{M}}^{\widehat{M}(l, l') \oplus N})^{-1} \circ \psi_{\widehat{M}} \big) \\ 
& = \varphi_{N}^{\widehat{M}(l, l') \oplus N} \circ ( \varphi_{\widehat{M}}^{\widehat{M}(l, l') 
\oplus N} )^{-1} \circ \psi_{\widehat{M}}. 
\end{align*}
Moreover, by the assumption of $\big(T, \{\psi_{M}\}_{M \in \underline{\mathrm{CM}}(S)} \big)$, 
$$
\psi_{N} = \varphi_{N}^{\widehat{M}(l, l') \oplus N} \circ \psi_{\widehat{M}(l, l') \oplus N}.
$$
Hence, to prove 
$\psi_N = \varphi_N \circ \eta_T$, it suffices to show that 
$$
\psi_{\widehat{M}(l, l') \oplus N} = ( \varphi_{\widehat{M}}^{\widehat{M}(l, l') 
\oplus N} )^{-1} \circ \psi_{\widehat{M}}.
$$ 
In fact, this is true by composing 
$( \varphi_{\widehat{M}}^{\widehat{M}(l, l') \oplus N} )^{-1}$ on both sides of the following equality
$$
\varphi_{\widehat{M}}^{\widehat{M}(l, l')\oplus N} \circ \psi_{\widehat{M}(l, l') \oplus N} 
= \psi_{\widehat{M}} . 
$$
Now letting $N=M$, we get  
$\psi_M = \varphi_M \circ \eta_T$.

Finally, we show the uniqueness of $\eta_T$. 
Assume that there is another algebra homomorphism $\eta'_T$ satisfying that 
$\psi_M = \varphi_M \circ \eta'_T$ for any Cohen-Macaulay $S$-module $M$. Choosing 
the Cohen-Macaulay $S$-module $\widehat{M}$, we have   
$$
\varphi_{\widehat{M}} \circ \eta'_{T} = \psi_{\widehat{M}} = 
\varphi_{\widehat{M}} \circ \eta_T.
$$
Next, composing the inverse $\varphi_{\widehat{M}}^{-1}$ 
on the both sides of the above equality, we get that 
$\eta'_T = \eta_T$. 
This shows the uniqueness of $\eta_T$. 
QED. 
\end{proof}

Combining Theorem \ref{kkeyl} with Corollary \ref{cor:reducedloci} we get the following. 

\begin{corollary}\label{Theo33}
The coordinate ring of $\sqrt{\mathrm{Sing}\big(\mathrm{Spec}(S)\big)}$, which is
$\varphi(S) / \sqrt{\varphi(S) \cap \Lambda e \Lambda }$, is isomorphic to the ring of
the inverse limit of $\mathcal {R}^S$. 	
\end{corollary}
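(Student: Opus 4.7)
The plan is to observe that Corollary \ref{Theo33} is essentially a direct combination of the two results just established, namely Theorem \ref{kkeyl} and Corollary \ref{cor:reducedloci}, so the proof will amount to a short assembly argument rather than any new technical work.

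First, I would invoke Corollary \ref{cor:reducedloci}, which identifies the reduced singular locus as a scheme:
$$
\sqrt{\mathrm{Sing}\big(\mathrm{Spec}(S)\big)}
\;\cong\;
\mathrm{Spec}\big(\varphi(S)/\sqrt{\varphi(S) \cap (\Lambda e \Lambda)}\big).
$$
Taking global sections (equivalently, reading off the coordinate ring of this affine scheme), one concludes that the coordinate ring of $\sqrt{\mathrm{Sing}(\mathrm{Spec}(S))}$ is precisely $\varphi(S)/\sqrt{\varphi(S) \cap (\Lambda e \Lambda)}$. This step is essentially definitional given what has already been proved and requires no further computation.

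Next, I would apply Theorem \ref{kkeyl}, which shows that $\big(\varphi(S)/\sqrt{\varphi(S) \cap \Lambda e \Lambda},\{\varphi_M\}_{M \in \underline{\mathrm{CM}}(S)}\big)$ satisfies the universal property of the inverse limit of $\mathcal{R}^S$. By uniqueness of inverse limits in the category of commutative rings, this identifies $\varphi(S)/\sqrt{\varphi(S) \cap \Lambda e \Lambda}$ canonically with $\varprojlim \mathcal{R}^S$.

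Chaining these two isomorphisms of rings yields the claimed identification of the coordinate ring of $\sqrt{\mathrm{Sing}(\mathrm{Spec}(S))}$ with $\varprojlim \mathcal{R}^S$. There is no genuine obstacle here, since all the real work has been done in the preceding sections: the construction of the inverse system $\mathcal{R}^S$ from the triangulated category $\underline{\mathrm{CM}}(S)$, the verification that $\varphi_{\widehat{M}}$ is an isomorphism (Theorem \ref{Theo22}), the compatibility Lemma \ref{Lefttt}, and the stability statement (Proposition \ref{Cl22}) that feeds into the proof of the universal property. The corollary is thus best presented as a one-paragraph remark combining Corollary \ref{cor:reducedloci} with Theorem \ref{kkeyl}.
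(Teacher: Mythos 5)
Your proposal is correct and matches the paper's argument precisely: the paper states the corollary immediately after Theorem \ref{kkeyl} with the one-line justification that it follows from combining Theorem \ref{kkeyl} with Corollary \ref{cor:reducedloci}, which is exactly the two-step assembly you describe.
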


\subsection{Proof of Theorem \ref{main4}}

Now we are ready to prove our main theorem.

\begin{proof}[Proof of Theorem \ref{main4}] 
Let $S$ be the Gorenstein commutative Noetherian ring 
as in Example \ref{keyexample}. 
By Corollary \ref{Theo33}, we get the coordinate ring of 
$\sqrt{\mathrm{Sing}\big(\mathrm{Spec}(S)\big)}$ from  
the inverse limit of $\mathcal {R}^S$.  
Note that 
the inverse system $\mathcal{R}^S$ only depends on the triangulated category 
$D_{sg}(S) \cong \underline{\mathrm{CM}}(S)$, so does
the coordinate ring of $\sqrt{\mathrm{Sing}\big(\mathrm{Spec}(S)\big)}$.

If $\Upsilon: D_{sg}(S_1) \rightarrow D_{sg}(S_2)$
is a triangle equivalence, then the inverse systems 
$\mathcal R^{S_1}$ and $\mathcal R^{S_2}$
are isomorphic, and so are their limits.
Thus by 
Corollary \ref{Theo33} we obtain that 	
$$
\sqrt{\mathrm{Sing}\big(\mathrm{Spec}(S_1)\big)} \cong \sqrt{\mathrm{Sing}
\big(\mathrm{Spec}(S_2)\big)}
$$	
as schemes. 	
\end{proof}

\section{Examples}\label{EX}

In this section, we give two simple examples of our main theorem.

\begin{example}\label{Ex1}
Let $R = k[x_{1}, x_{2}, x_{3}]$	 and $G \subseteq \mathrm{SL}(3, k)$ be generated by elements 
$$	
f_1 :=\mathrm{Diag}(1, -1, -1) \,\, ,  f_2 := \mathrm{Diag}(-1, 1, -1),
$$	
where $f_1$ sends $x_1$ to $x_1$, $x_2$ to $-x_2$ and 
$x_3$ to $-x_3$. $f_2$ sends $x_1$ to $-x_1$, $x_2$ to $x_2$ and $x_3$ to $-x_3$. 
Then $S= R^{G} = k[x_1^2, x_2^2, x_3^2, x_1 x_2x_3] \cong k[A, B, C, D] \big/ (ABC -D^{2})$, 
where $A = x_{1}^{2}$, $B = x_{2}^{2}$, $C = x_{3}^{2}$ and $D = x_{1}x_{2}x_3$. 	

Consider the Jacobi ring 
$k[A, B, C, D] \big/ (AB, BC, CA, D)$ of the hypersurface $\mathrm{Spec}(S)$.    
Since $ABC - D^2 \subseteq (AB, BC, CA, D)$, we have that 
$$
\mathrm{Sing}\big(\mathrm{Spec}(S) \big) \cong \mathrm{Spec}
\big( k[A, B, C, D] \big/ (AB, BC, CA, D) \big).
$$

We now apply the method in the previous sections to compute 
$\mathrm{Sing}\big(\mathrm{Spec}(S) \big)$. To this end, consider the commutative ring 
$$
Z\big(\mathrm{End}_{\underline{\mathrm{CM}}(S)}( \bigoplus_{i} M_i)\big) 
\cong Z\big(\Lambda / \Lambda e \Lambda\big). 
$$
It is given as follows: observe that
$\widehat{M} \cong S \oplus (V_1 \otimes R)^{G} 
\oplus (V_2 \otimes R)^{G} \oplus (V_3 \otimes R)^{G}$ and
$\Lambda = \mathrm{End}_{S}(\widehat{M})$, where
\begin{itemize}
\item[$-$] $V_1$ is the irreducible representation of
$G$ given by $f_{1}v_1 = v_1$ and $f_{2}v_1 = -v_1$ for any $v_1 \in V_1$, 

\item[$-$] $V_2$ is the irreducible representation of
$G$ given by $f_{1}v_2 = -v_2$ and $f_{2}v_2 = v_2$ for any $v_2 \in V_2$, and  

\item[$-$] $V_3$ is the irreducible representation of
$G$ given by $f_{1}v_3 = -v_3$ and $f_{2}v_3 = -v_3$ for any $v_3 \in V_3$.  	
\end{itemize}
Let $M_i = (V_i \otimes R)^G$ and $M_0 = S$, which are Cohen-Macaulay $S$-modules. 
It is easy to check that
\begin{itemize}
\item[$-$] $\mathrm{End}_{\underline{\mathrm{CM}}(S)}(M_1) = S/I_1$, 
where $I_1 := (x_1^{2}, x^{2}_{2}x^{2}_3, x_1 x_2 x_3) \cong (A, BC, D)$;

\item[$-$] $\mathrm{End}_{\underline{\mathrm{CM}}(S)}(M_2) = S/I_2$, 
where $I_2 := (x_2^{2}, x^{2}_{1}x^{2}_3, x_1 x_2 x_3) \cong (B, AC, D)$;

\item[$-$] $\mathrm{End}_{\underline{\mathrm{CM}}(S)}(M_3) = S/I_3$, where
$I_3 := (x_3^{2}, x^{2}_{1}x^{2}_2, x_1 x_2 x_3) \cong (C, AB, D)$.
\end{itemize}
For $i=1,2,3$,
$\mathrm{Spec}(S/I_i)$ is a subscheme of the singular locus 
$\mathrm{Spec}\big( S\big/(AB, BC, AC, D) \big)$.
The intersection of $\mathrm{Spec}(S/I_i)$ and $\mathrm{Spec}(S/I_j)$ is 
$\mathrm{Spec}\big(S/(I_{i}, I_j) \big)$ for any $i, j \in \{1, 2, 3\}$, 
and the intersection of the three schemes is 
the origin in $\mathrm{Spec}(S)$. 

By Auslander's theorem,
$\Lambda\cong G \sharp R$.
The associated quiver
$Q_{\Lambda} $ is
\begin{displaymath}
\xymatrix{
\bullet_{3} \ar@/^0.4cm/[rrr]_{x_{3}} \ar@/^1.1cm/[rrrrrr]^{x_{2}}
\ar@/^0.20cm/[drdrdr]^{x_{1}}
&&& \bullet_{0} \ar@/^0.4cm/[rrr]_{x_{1}}
\ar@/^0.3cm/[lll]_{x_{3}}
\ar@/^0.3cm/[ddd]^{x_{2}}
&&& \bullet_{1} \ar@/^0.3cm/[lll]_{x_{1}}
\ar@/^1cm/[llllll]^{x_{2}}
\ar@/^0.50cm/[dldldl]^{x_{3}} \\ \\ \\ 
&&& \bullet_{2} \ar@/^0.3cm/[uuu]^{x_{2}}
\ar@/^0.50cm/[ululul]^{x_{1}}
\ar@/^0.20cm/[ururur]^{x_{3}}
}
\end{displaymath}
In the above diagram, vertex $i$ 
corresponds to the $S$-module $M_i$.
The quiver of 
$\Lambda_{\mathrm{con}}  \cong \Lambda / \Lambda e \Lambda$ is given as follows: 
\begin{displaymath}
\xymatrixcolsep{1.5pc}
\xymatrixrowsep{1.5pc}
\xymatrix{
\bullet_{3} \ar@/^0.6cm/[rrrrrr]^{\bar{x}_{2}}
\ar@/^0.30cm/[drdrdr]^{\bar{x}_{1}}
&&&&&& \bullet_{1} 
\ar@/^0.4cm/[llllll]^{\bar{x}_{2}}
\ar@/^0.40cm/[dldldl]^{\bar{x}_{3}} \\ \\ \\ 
&&& \bullet_{2} 
\ar@/^0.40cm/[ululul]^{\bar{x}_{1}}
\ar@/^0.30cm/[ururur]^{\bar{x}_{3}}
}
\end{displaymath}
where $\bar{x}_2\bar{x}_3 = \bar{x}_3 \bar{x}_2 = 0$, 
$\bar{x}_2 \bar{x}_1 = \bar{x}_1 \bar{x}_2 = 0$ 
and $\bar{x}_1 \bar{x}_3 = \bar{x}_3 \bar{x}_1 = 0$.
Let $J_{1, 2}$ be the annihilator of $e_1 \bar{x}_3 e_2 
\in \Lambda / \Lambda e \Lambda$. It is easy to check that   
$J_{1, 2} = (\bar{x}_1^2, \bar{x}_2^2, \bar{x}_1\bar{x}_2\bar{x}_3) 
\cong (A, B, D) \cong (I_1, I_2)$. 
Analogously, 
\begin{align*}
&J_{2, 1} \cong J_{1,2}\cong (A, B, D) \cong (I_1, I_2),\\
&J_{2, 3} \cong J_{3, 2} \cong (B, C, D) \cong (I_2, I_3),\\
&J_{3, 1} \cong J_{3, 1} \cong (A, C, D) \cong (I_3, I_1).
\end{align*}
Thus, we see that 
\begin{align*}
Z(\Lambda / \Lambda e \Lambda) 
& = \Big\{(\bar{h}_1, \bar{h}_2, \bar{h}_3)\in S/I_1 \oplus S/I_2 
\oplus S/I_3 \,  \big| h_j -h_{j'} \in J_{j, j'}, \forall j, j' \in \{1, 2, 3 \}\Big\} \\
& \cong S/(AB, BC, AC, D), 
\end{align*}
where $h_j \in S$. 	
The above second equality is given by the following. 
Since $I_1 \cap I_2 \cap I_3 = (AB, BC, AC, D)$, there is an injection 
$$
S\big/(AB, BC, AC, D) \hookrightarrow  S/I_1 \oplus S/I_2 \oplus S/I_3,  
$$
given by the three natural projections 
\begin{align*}
S\big/(AB, BC, AC, D) \twoheadrightarrow S\big/(A, BC, D) =S/I_1,\\
S\big/(AB, BC, AC, D)
 \twoheadrightarrow S\big/(B, AC, D) =S/I_2,\\
S\big/(AB, BC, AC, D) \twoheadrightarrow S\big/(C, AB, D) =S/I_3.
\end{align*}
It is clear that the image of this injection is exactly the subring:  
$$
\big\{(\bar{h}_1, \bar{h}_2, \bar{h}_3) \in S/I_1 \oplus 
S/I_2 \oplus S/I_3 \,  \big| h_j -h_{j'} \in J_{j, j'}, \forall j, j' \in \{1, 2, 3 \}\big\},
$$
since the intersection of $\mathrm{Spec}(S/I_j)$ and $\mathrm{Spec}(S/I_{j'})$ is exactly
$\mathrm{Spec}(S/J_{j, j'})$. 

Thus we have 	
$$	
\mathrm{Sing} \big(\mathrm{Spec}(S)\big) \cong \mathrm{Spec}\big( S\big/(AB, BC, AC, D) \big) 
\cong \mathrm{Spec}\big(Z(\Lambda / \Lambda e \Lambda)\big).  
$$		
Hence, the reduced scheme  
of $\mathrm{Spec}( Z\big(\Lambda /\Lambda e\Lambda) \big)$ is exactly 
$\sqrt{\mathrm{Sing}\big(\mathrm{Spec}(S)\big)}$.
\end{example}

\begin{example}\label{Ex2}
Let $V$ be a three-dimensional vector space, $R = k[V] = k[x_{1}, x_{2}, x_{3}]$ and 
$G \subseteq \mathrm{SL}(V)$ be generated by elements 
$$	
g := \mathrm{Diag}(\sigma, \sigma, \sigma^2),  	
$$	
where $\sigma$ is a $4$-th primitive root of unit. 
Here, $g$ sends $x_1$ to $\sigma x_1$, $x_2$ to $\sigma x_2$ and $x_3$ to $ \sigma^2 x_3$. 
Then
$$
S = R^G = k[x^{4}_{1}, x^{4}_{2}, x^{2}_{3}, x_{1}x^{3}_{2}, x^{2}_{1}x^{2}_{2}, 
x^{3}_{1}x_{2}, x^{2}_1 x_{3}, x^{2}_2 x_{3}, x_{1}x_{2}x_{3}]. 
$$ 		

Since $G$ does not contain any pseudo-reflection,
by the Chevalley-Shephard-Todd theorem (see Appendix \ref{App:B}), 
the reduced scheme of the singular locus in $\mathrm{Spec}(S)$
consists of the 
images under the quotient map of the points whose isotropy subgroup  
is nontrivial. 	
Let $W_3 \subseteq V$ be the one-dimensional subspace with the associated 
surjection of algebras $\pi_3 : k[V] \twoheadrightarrow k[W_3] \cong k[x_3]$ given by $\pi_3(x_3) = x_3$ and 
$\pi_3(x_1) = \pi_3(x_2)= 0$.  
It is direct to see that the set of points whose isotropy subgroups  
are nontrivial is exactly $W_3$. 
Now, the isotropy subgroup of $W_3$ is 
$$
G_3 := \big< \mathrm{Diag}(-1, -1,  1)\big> \subseteq G.
$$
Hence, 
$$
\sqrt{\mathrm{Sing}\big(\mathrm{Spec}(S)\big)} 
\cong W_3 / (G/G_3) = \mathrm{Spec}(k[x
_3^2]).
$$

Consider the skew group algebra $\Lambda$, the quiver $Q_{\Lambda}$ is given as follows:
\begin{displaymath}
\xymatrix{
\bullet_{0} \ar@/^0.4cm/[rrrrr]^{x_{1}} 
\ar@/_0.3cm/[rrrrr]_{{{x_{2}}}} 
\ar@/^0.4cm/[ddd]^{{{x_{3}}}}
 &&&&& \bullet_{1} 
\ar@/^0.4cm/[ddd]^{{{x_{3}}}} 
\ar@/^0.4cm/[lllllddd]^-{{{x_{2}}}}  
\ar@/_0.4cm/[lllllddd]_-{x_{1}} \\ \\ \\ 
\bullet_{2} \ar@/^0.4cm/[uuu]^{{{x_{3}}}} 
\ar@/^0.3cm/[rrrrr]^{x_{1}} 
\ar@/_0.4cm/[rrrrr]_{{{x_{2}}}}  
&&&&& \bullet_{3}  
\ar@/^0.4cm/[llllluuu]^-{{{x_{2}}}} 
\ar@/_0.4cm/[llllluuu]_-{x_{1}} \ar@/^0.4cm/[uuu]^{{{x_{3}}}} } 
\end{displaymath}
The quiver $Q_{\mathrm{con}}$ associated to 
$\Lambda_{\mathrm{con}} \cong \Lambda / \Lambda e \Lambda$ is the following: 
\begin{displaymath}
\xymatrix{
&&&& \bullet_{1} 
\ar@/^0.3cm/[dd]^{{{\bar{x}_{3}}}} 
\ar@/^0.4cm/[lllldd]_-{{{\bar{x}_{2}}}}  
\ar@/_0.5cm/[lllldd]_-{\bar{x}_{1}} \\ \\ 
\bullet_{2} \ar@/^0.5cm/[rrrr]_{\bar{x}_{1}} 
\ar@/_0.4cm/[rrrr]_{{{\bar{x}_{2}}}}  &&&&
\bullet_{3} \ar@/^0.3cm/[uu]^{{{\bar{x}_{3}}}}  }  
\end{displaymath}
Let $e_r$ be the indecomposable idempotent corresponding 
to vertex $\bullet_r$ in the quiver $Q_{\Lambda}$. 
Then from the above quiver, we obtain
\begin{align}\label{FNVJH12975}
e_{r} (1 \otimes \bar{x}_i \bar{x}_j \bar{x}_3) e_{r'} = 0 
\quad\mbox{and}\quad
e_{r} (1 \otimes \bar{x}_i \bar{x}^2_3) e_{r'} = 0 
\end{align}
in $\Lambda / \Lambda e \Lambda$, for any $r, r' \in \{1, 2, 3 \}$ and any $i, j \in 
1, 2$. In fact, one of the following paths
 $$
 x_i x_j x_{3}, \,\,  x_i x_3 x_{j} \,\, x_{3} x_i {x_j}, \,\, x_i x_3 x_3 \,\,  x_3 x_i x_3 \,\, x_3 x_3 x_i
 $$ 
 starting from vertex $\bullet_r$ must pass through vertex $\bullet_0$ in $Q_\Lambda$. By 
(\ref{FNVJH12975}), 
it is then straightforward to check that 
$$
\mathrm{End}_{\underline{\mathrm{CM}}(S)}(M_1) = k[\bar{x}^2_3],\,\, 
\mathrm{End}_{\underline{\mathrm{CM}}(S)}(M_2) = k,\,\, 
\mathrm{End}_{\underline{\mathrm{CM}}(S)}(M_1) = k[\bar{x}^2_3],
$$
where $M_i$ is the direct summand of $R$ corresponding to the idempotent $e_i$. 
Thus, by (\ref{FNVJH12975}) again, it is easy to verify that 
$$
Z(\Lambda /\Lambda e\Lambda) = Z\big(\mathrm{End}_{\underline{\mathrm{CM}}(S)}
(\bigoplus\limits_{i=1}^3 M_i ) \big) = k[\bar{x}^2_3] 
$$
as algebras.   
Hence, the reduced scheme  
of $\mathrm{Spec}( Z\big(\Lambda /\Lambda e\Lambda) \big)$ is 
$\sqrt{\mathrm{Sing}\big(\mathrm{Spec}(S)\big)}$. 
\end{example}

\appendix

\section{Proof of Lemma \ref{Keyy11}}\label{App:A}

\begin{proof}[Proof of Lemma \ref{Keyy11}]
First, from the constructions of $\tau^H$ and $Pr_{H}$, 
we know that they are both surjective maps, and thus so is
$\zeta_{H}$. 
Thus to prove this lemma, it is enough to prove that $\zeta_{H}$ is an algebra homomorphism,
which is further sufficient to show that 
for any $\alpha, \beta \in \Lambda$, 
$$
\zeta_{H}(\alpha \beta) = \zeta_{H}(\alpha) \zeta_{H}(\beta).
$$

Without loss of generality, let $\alpha \in \mathrm{Hom}_{S}(M_{\chi_2}, M_{\chi_3})$ and $\beta 
\in \mathrm{Hom}_{S}(M_{\chi_1}, M_{\chi_2})$ for 
characters $\chi_1, \chi_2, \chi_3$ of $G$. 
Moreover, suppose that $\alpha$ and $\beta$ are monomials in $(V_{\chi_3 \chi_2^{-1}} \otimes R)^G \subseteq R$
and $(V_{\chi_2 \chi_1^{-1}} \otimes R)^G \subseteq R$ respectively. 
We have the following two possibilities about $\chi_1,\chi_2$ and $\chi_3$: 
\begin{enumerate}
\item[(1)]  $\chi_1$, $\chi_2$ and $\chi_3$ are all in $\{\lambda^{H}_i \}_{i}$;
\item[(2)]  at least one of $\{ \chi_{1}, \chi_{2}, \chi_3\}$ is 
not in $\{\lambda^{H}_i \}_{i}$. 
\end{enumerate}

For case (1), by the
definition of $Pr_H$ (see (\ref{PrHHH})), the images of 
$\alpha$, $\beta$ and $\alpha \beta$
under the projection
$$Pr_H:
\mathrm{Hom}_{S}(\widehat{M}, \widehat{M}) \twoheadrightarrow 
\mathrm{Hom}_{S}\big(\bigoplus_i  M_{\lambda^{H}_i}, \bigoplus_i  M_{\lambda^{H}_i} \big) 
$$
are also $\alpha, \beta, \alpha \beta \in 
 \mathrm{Hom}_{S}\big(\bigoplus_i  M_{\lambda^{H}_i}, \bigoplus_i  M_{\lambda^{H}_i} \big)$ 
as monomials. 
Then we have 
$Pr_{H}(\alpha) = \alpha$, $Pr_{H}(\beta) = \beta  
$ and $
Pr_{H}(\alpha \beta)= \alpha \beta$.

In the meantime, we know that $\chi_1$, $\chi_2$ and $\chi_3$ are all in $\{\lambda^{H}_i \}_{i}$. 
Then by (\ref{Lab11}),
we have  
$\alpha \in \big(  V_{\chi_3 \chi_2^{-1}}\otimes k[W_H ]\otimes k[W'_H]^H\big)^{G/H}$ 
and $\beta \in \big(  V_{\chi_2 \chi_1^{-1}}\otimes k[W_H ]\otimes k[W'_H]^H\big)^{G/H}$ as monomials. 
Thus, we have 
$\alpha = \alpha^{H} \alpha'$ and $\beta = \beta^{H} \beta'$
as monomials, where 
$\alpha^H \in  V_{\chi_3 \chi_2^{-1}}\otimes k[W_H ] \cong k[W_H ]$, $\beta^H \in  V_{\chi_2} \chi_1^{-1} \otimes k[W_H ] \cong k[W_H ]$, and 
$\alpha', \beta' \in k[W'_H]^H$. 
Moreover we have  
 $\alpha \beta = \alpha^H \beta^H \alpha' \beta'$
as monomials. By the definition 
of $\tau_{\chi_2, \chi_3}^H$, we have
$$ \tau_{\chi_2, \chi_3}^H (\alpha)=\left\{
\begin{array}{cl}
\alpha, &\alpha' \in k \subseteq k[W'_H]^H,\\
0,&\alpha' \in (k[W'_H]^H)^{+},
\end{array}
\right.
$$
where $(k[W'_H]^H)^{+}$ is the augmentation ideal
of $k[W'_H]^H$.
Similarly, we have
$$ \tau_{\chi_1, \chi_2}^H (\beta)=\left\{
\begin{array}{cl}
\beta, & \beta' \in k \subseteq k[W'_H]^H, \\
0, & \beta' \in (k[W'_H]^H)^{+},
\end{array}
\right.
$$
and 
$$ \tau_{\chi_1, \chi_3}^H (\alpha \beta)=\left\{
\begin{array}{cl}
\alpha \beta, & \alpha' \beta' \in k \subseteq k[W'_H]^H,   \\
0, &\alpha' \beta' \in (k[W'_H]^H)^{+}.
\end{array}
\right.
$$
If $\alpha', \beta' \in k$, then 
$\tau_{\chi_2, \chi_3}^H (\alpha) 
\tau_{\chi_1, \chi_3}^H (\beta) = \alpha \beta = \tau_{\chi_1, \chi_3}^H (\alpha \beta)$. 
If one of $\alpha', \beta'$ is not in $k$, $\alpha' \beta'$ 
is not in $k$, either. 
Then 
$\tau_{\chi_2, \chi_3}^H (\alpha) \tau_{\chi_1, \chi_3}^H (\beta) = 
0 = \tau_{\chi_1, \chi_3}^H (\alpha \beta)$. 
It follows that 
$$
\tau_{\chi_2, \chi_3}^H (\alpha) \tau_{\chi_1, \chi_3}^H (\beta) = \tau_{\chi_1, \chi_3}^H (\alpha \beta).
$$ 
It suggests that 
$\tau^H (\alpha) \tau^H (\beta) = \tau^H (\alpha \beta)$ 
since $ \tau^H = \bigoplus_{i, j} \tau_{\lambda^{H}_i, \lambda^{H}_j}^H$. 
By composing with the projection $Pr_{H}$, 
we have  
\begin{align*}
\zeta_{H}(\alpha \beta) & = \tau^{H} \circ Pr_{H}(\alpha \beta) = \tau^{H}(\alpha \beta) \\
& = \tau^{H}(\alpha) \tau^{H}(\beta)= \big(\tau^{H} 
\circ Pr_{H}(\alpha)\big) \big( \tau^{H} \circ Pr_{H}(\beta)\big)\\
& = \zeta_{H}(\alpha) \zeta_{H}(\beta)
\end{align*} 
in this case. 
 
For case (2), if 
one of $\chi_1, \chi_3$ is not in $\{\lambda^{H}_i \}_{i}$, 
then one of the images of 
$\alpha$ and $\beta$ is trivial under 
the projection  
$Pr_{H}$. In this case, 
the image of $\alpha \beta$ is also trivial under $Pr_H$. Hence, 
when one of $\chi_1, \chi_3$ is not in $\{\lambda^{H}_i \}_{i}$, we have 
$Pr_{H}(\alpha \beta) = 0 = Pr_{H}(\alpha) Pr_{H}(\beta)$. 
By composing with $\tau^H$, we obtain that 
$$
\zeta_{H}(\alpha \beta) = 0 = \zeta_{H}(\alpha) \zeta_{H}(\beta). 
$$

If $\chi_2$ is not in $\{\lambda^{H}_i \}_{i}$ and 
$\chi_1, \chi_3$ are both in $\{\lambda^{H}_i \}_{i}$, 
we first have 
$\zeta_{H}(\alpha) = \zeta_{H}(\beta) 
=0$ since the images of 
$\alpha$ and $\beta$ is trivial under 
projection $Pr_H$.  
In the meantime, by Lemma \ref{Equi} we have
$$ 
\alpha \in \mathrm{Hom}_{S}(M_{\chi_2}, M_{\chi_3}) \cong 
\big(V_{\chi_2^{-1} \chi_3} \otimes k[W_H] \otimes k[W'_H] \big)^G.
$$
Then we can write $\alpha=\alpha^{*} \alpha^{\vee}$ as monomials, 
where $\alpha^{*} \in V_{\chi_2^{-1} \chi_3} \otimes k[W_H] \cong k[W_H]$ 
and $\alpha^{\vee} \in k[W'_H]$. 
At this time, neither of $\chi_{1}^{-1}\chi_2, \chi_{2}^{-1}\chi_3$ 
is in $\{\lambda^{H}_i \}_{i}$. 
Then $V_{\chi_{2}^{-1}\chi_3}$ is not invariant under action 
of $H$. 
Meanwhile, $k[W_H]$ is invariant under action 
of $H$. 
Hence, $\alpha^{*}$ is not invariant under action of $H$. It implies that $\alpha^{*}$ is not invariant under action of $G$. 
But  
$\alpha^{*} \alpha^{\vee}$ is invariant under action of $G$. Thus, 
$\alpha^{\vee} \in (k[W'_H]^H)^{+}$ since $k$ is invariant under action of $G$. 

On the other hand, let $\beta = \beta^{*} \beta^{\vee}$ 
as monomials, 
where $\beta^{*} \in V_{\chi_1^{-1} \chi_2} \otimes k[W_H]$ and $\beta^{\vee} 
\in k[W'_H]$. In the same way, we get that $\beta^{\vee} \in (k[W'_H]^H)^{+}$
as a monomial. 

Thus, 
we have $\alpha \beta = \alpha^{*} \beta^{*} \alpha^{\vee} \beta^{\vee}$ as 
monomials such that $\alpha^{\vee} \beta^{\vee} \in (k[W'_H]^H)^{+}$.  
Then $\tau_{\chi_1, \chi_3}^H (\alpha \beta) = 0$ 
by the definition of $\tau_{\chi_1, \chi_3}^H$. 
It implies that $\tau^{H}(\alpha \beta) = 0$. 
Finally, from $\zeta_{H} = \tau^{H} \circ Pr_H$, we have
$\zeta_{H}(\alpha \beta) = 0$. It follows that if $\chi_2$ is not in 
$\{\lambda^{H}_i \}_{i}$ and 
$\chi_1, \chi_3$ are both in $\{\lambda^{H}_i \}_{i}$,
$$
\zeta_{H}(\alpha \beta) = 0 = \zeta_{H}(\alpha) \zeta_{H} (\beta).
$$
Thus we have $\zeta_{H}(\alpha \beta) = 0 = \zeta_{H}(\alpha) \zeta_{H} (\beta)$ 
in this case. 

In summary, in both cases
$
\zeta_{H}(\alpha \beta) = \zeta_{H}(\alpha) \zeta_{H} (\beta)
$.
This completes the proof.
\end{proof}

\section{Proof of Theorem \ref{Key4}}\label{App:B}

Before proving this theorem, let us first recall the 
Chevalley-Shephard-Todd theorem (see,
for example, \cite{Bo}):  
{\it
Let $G$ be a finite group acting faithfully on an affine space $V$. 
For $x \in V$, let $G_x$ be the stabilizer of $x$. Let $\bar{x}$ be the image of $x$ in the quotient scheme $V / G$. 
Then $\bar{x}$ is 
a non-singular point if and only if $G_x$ is generated by pseudo-reflections. 
}

Here, a pseudo-reflection is a linear isomorphism 
$s: V \xrightarrow{\sim} V$ that 
leaves a hyperplane   
$W \subseteq V$ pointwise invariant.
When $G$ is a finite subgroup of $\mathrm{SL}(V)$, the unique pseudo-reflection is the identity. 
Thus by the Chevalley-Shephard-Todd theorem, 
we know that $\bar{x}$ is not a singular point if and only if  $G_x$ is trivial in our case, 
and obtain the following.

\begin{lemma}\label{KeyPr} 
With the notations from \S\ref{subsect:singularlocus}, 
$$
\sqrt{\mathrm{Sing}\big( \mathrm{Spec}(S)\big)} 
\cong \bigcup\limits_{(H, W_H)} W_{H}\big/(G/H) \cong  
\bigcup\limits_{(H, W_H)} \mathrm{Spec}(S_{H})
$$
as subschemes of $\mathrm{Spec}(S)$. 
\end{lemma}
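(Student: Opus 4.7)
The plan is to apply the Chevalley-Shephard-Todd theorem stated immediately before the lemma. Since $G\subseteq\mathrm{SL}(V)$, every pseudo-reflection in $G$ must be the identity: a non-identity pseudo-reflection acts trivially on a hyperplane and nontrivially on a complementary line, so its determinant is a nontrivial root of unity, contradicting membership in $\mathrm{SL}(V)$. Consequently, $G_x$ is generated by pseudo-reflections if and only if $G_x=\{e\}$, so a point $\bar x\in V/G=\mathrm{Spec}(S)$ is singular if and only if the stabilizer $G_x$ of any preimage $x\in V$ is nontrivial.

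Next, I would translate this into the $W_H$ language. A point $x\in V$ has nontrivial stabilizer $G_x\neq\{e\}$ if and only if $x\in W_{G_x}$, i.e., if and only if $x\in W_H$ for some nontrivial subgroup $H\subseteq G$. Therefore the set of points of $V$ with nontrivial stabilizer coincides with $\bigcup_{H\neq\{e\}}W_H$. Because any two nontrivial subgroups having the same fixed subspace contribute the same $W_H$, and because the equivalence class $(H,\chi_0)\in\tilde{G}$ was defined so that $H$ is the maximal subgroup among those with fixed subspace $W_H$, the union can be reindexed without loss by the pairs $(H,W_H)$ with $H$ maximal, as in the statement.

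Then I would verify that each $W_H/(G/H)$ is a well-defined reduced closed subscheme of $V/G$. Because $G$ is abelian, for $x\in W_H$ and $g\in G$ one has $h(gx)=g(hx)=gx$ for every $h\in H$, so $gx\in W_H$; thus $W_H$ is $G$-stable and the action factors through $G/H$, giving $W_H/(G/H)\cong\mathrm{Spec}(S_H)$ with $S_H=R_H^{G/H}$. This is a normal domain (by Proposition \ref{ASstru} applied to $G/H\subseteq\mathrm{SL}(W_H)$), hence reduced, and it embeds as a closed subscheme of $\mathrm{Spec}(S)$ via the surjection $S=R^G\twoheadrightarrow R_H^{G/H}=S_H$ of Notation \ref{list:notations}. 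Its underlying topological space is precisely the image of $W_H$ in $V/G$ under the quotient morphism.

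Combining the three steps, the underlying topological space of $\sqrt{\mathrm{Sing}\bigl(\mathrm{Spec}(S)\bigr)}$ coincides with the image in $V/G$ of $\bigcup_{(H,W_H)}W_H$, which in turn coincides with the underlying space of $\bigcup_{(H,W_H)}W_H/(G/H)\cong\bigcup_{(H,W_H)}\mathrm{Spec}(S_H)$. Since both sides are reduced closed subschemes of $\mathrm{Spec}(S)$ with the same underlying set, they are equal as subschemes. The only real obstacle is justifying the scheme-theoretic (and not merely set-theoretic) identification; this is handled automatically by the fact that both sides are reduced by construction, so they are determined by their underlying closed subsets.
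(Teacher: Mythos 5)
Your proof is essentially the same approach as the paper's, which simply invokes the Chevalley--Shephard--Todd theorem (recalled just before the lemma) and states the conclusion without further argument; you supply the details the paper omits, and the overall logic is sound. One small slip: you justify reducedness of $S_H$ by claiming $G/H\subseteq\mathrm{SL}(W_H)$ and citing Proposition~\ref{ASstru}, but this inclusion is generally false --- in the paper's Example~\ref{Ex2}, $G/G_3\cong\mathbb{Z}/2$ acts on the one-dimensional $W_{G_3}$ by $\pm1$, and $\det(-1)=-1$. The conclusion you want is still true (invariant rings of finite groups acting linearly on polynomial rings over a field are always normal, regardless of the $\mathrm{SL}$ hypothesis; that hypothesis is only needed in Proposition~\ref{ASstru} for the Gorenstein property), but the cleaner and sufficient observation is the one the paper itself uses elsewhere: $S_H\subseteq R_H$ is a subring of a domain, hence reduced. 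With that correction, the step-by-step argument --- CST gives the singular set as the image of points with nontrivial stabilizer, these are exactly $\bigcup W_H$, reindexing by maximal $H$, $G$-stability of $W_H$ by commutativity, and matching two reduced closed subschemes by their underlying sets --- is exactly what the paper intends.
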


Now, let $I_H \subseteq S$ be the ideal 
associated to 
the component $W_{H}\big/(G/H)$ of singular locus in 
$\mathrm{Spec}(S)$. By Lemma \ref{KeyPr},
$$
\sqrt{\mathrm{Sing}\big( \mathrm{Spec}(S)\big)} = 
\bigcup\limits_{(H, W_H)} \mathrm{Spec}(S_{H}) = \mathrm{Spec}\big( S \big/ \bigcap_{(H, W_H)} I_{H}  \big)
$$
as subschemes of $\mathrm{Spec}(S)$. 
We have the following lemma. 

\begin{lemma}\label{KeyPrLe1}
Let $\mathfrak{S}$ be a set of formal variable in $R$ such that $\mathfrak{S} \not\subset R_H$ for any  
$(H, W_H)$. Then the image of 
$\bigodot\limits_{x_i \in \mathfrak{S}} x_i^{\#(G)} \in S$ is trivial in the coordinate ring of 
$\sqrt{\mathrm{Sing}\big( \mathrm{Spec}(S)\big)}$, where  
$\bigodot$ is the product of $R$. 
\end{lemma}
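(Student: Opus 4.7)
The plan is to unpack what $I_H$ means concretely and then observe that the hypothesis forces the stated product to vanish on every component $W_H$. First I will verify that $F := \bigodot_{x_i \in \mathfrak{S}} x_i^{\#(G)}$ actually lies in $S = R^G$: because $G$ is a finite abelian group of order $\#(G)$, each character $\chi_{x_i}$ has order dividing $\#(G)$, so $\chi_{x_i^{\#(G)}} = \chi_{x_i}^{\#(G)} = \chi_0$. Hence each factor $x_i^{\#(G)}$ is $G$-invariant and therefore so is the product $F$.

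Next, by Lemma \ref{KeyPr} the coordinate ring of $\sqrt{\mathrm{Sing}(\mathrm{Spec}(S))}$ is $S \big/ \bigcap_{(H,W_H)} I_H$, so it suffices to show $F \in I_H$ for every $(H,W_H)$. Here $I_H$ is the kernel of the canonical surjection $S = k[V]^G \twoheadrightarrow k[W_H]^{G/H} = S_H$ coming from the closed embedding $W_H/(G/H) \hookrightarrow V/G$. Since the restriction map $R \twoheadrightarrow R_H = k[W_H]$ sends each $x_j$ with $E_j \in \mathfrak{S}_H$ to itself and every other $x_j$ to zero, a monomial $f \in R$ restricts to $0$ on $W_H$ exactly when $f$ involves at least one variable $x_j \notin R_H$.

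Now invoke the hypothesis: for the given pair $(H,W_H)$, we have $\mathfrak{S} \not\subset R_H$, so there exists $x_{j_0} \in \mathfrak{S}$ with $x_{j_0} \notin R_H$. Then $x_{j_0}^{\#(G)}$ is one of the factors of $F$, and its restriction to $W_H$ is $0$, so $F|_{W_H} = 0$; equivalently $F \in I_H$. Since $(H,W_H)$ was arbitrary, $F \in \bigcap_{(H,W_H)} I_H$, which is exactly the statement that the image of $F$ in the coordinate ring of $\sqrt{\mathrm{Sing}(\mathrm{Spec}(S))}$ is trivial.

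There is no serious obstacle in this argument; it is essentially a direct translation of the hypothesis through the definitions. The only subtlety worth being careful about is the identification of $I_H$ as the vanishing ideal in $S$ of the linear subspace $W_H \subseteq V$, which uses that $G$ is abelian (so $H \trianglelefteq G$ and $G/H$ acts on $W_H$) together with the description of $S_H$ as $k[W_H]^{G/H}$. Once this identification is made, the argument reduces to the elementary fact that a monomial in the variables $x_1,\ldots,x_n$ restricts to zero on the coordinate subspace $W_H$ as soon as it contains a single variable not supported on $W_H$.
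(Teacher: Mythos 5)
Your proof is correct and follows the same strategy as the paper: reduce to showing $F \in I_H$ for each $(H,W_H)$, identify $I_H$ as the kernel of $S \twoheadrightarrow S_H$ (i.e.\ the vanishing ideal of $W_H$), and use the hypothesis to produce a variable $x_{j_0} \in \mathfrak{S}\setminus R_H$ whose power kills $F$ under restriction to $W_H$. In fact your phrasing is slightly tighter than the paper's, which loosely writes $I_H = S \cap k[W'_H]$ (the subring) where it really means the intersection of $S$ with the ideal generated by the $W'_H$-variables; your observation that a monomial dies on $W_H$ as soon as one of its factors does sidesteps that imprecision.
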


\begin{proof}
To prove this lemma, it is sufficient to show that 
$\bigodot\limits_{x_i \in \mathfrak{S}} x_i^{\#(G)} \in I_{H}$ for any pairs $(H, W_H)$
as above. 
Note that $\bigodot\limits_{x_i \in \mathfrak{S}} x_i^{\#(G)} 
\in S$ since $x_i^{\#(G)}$ is invariant under the action of $G$. 

By the definition of $S_H \cong k[W_H]^G$, 
$I_{H}$ is the kernel of the natural 
quotient map $$S \cong k[W_H \oplus W'_H]^G \twoheadrightarrow k[W_H]^G \cong k[W_H]^{G/H} = S_H.$$
It implies that 
$$
I_{H} = S \cap k[W'_H]. 
$$
Thus to prove $\bigodot\limits_{x_i \in \mathfrak{S}} x_i^{\#(G)} \in I_{H}$, 
it is enough to show that $\mathfrak{S} \cap k[W'_H] \neq \emptyset$. 
Here, $\mathfrak{S}$ is a set of formal variable in $R_H$. Note that 
$R_H = k[W_H]$ and $k[W'_H]$ are both generated by formal variables as algebras. Then  
a formal variable in $R$ is either in $R_H$, or in $k[W'_H]$. 
Since $\mathfrak{S} \not\subset R_H = k[W_{H}]$, we have 
 $\mathfrak{S} \cap k[W'_H] \neq \emptyset$. 
\end{proof}

\begin{proof}[Proof of Theorem \ref{Key4}]
Since 
$$
\mathrm{Ker}\big(Z(\zeta_0)\big) \cong \mathrm{Ker}(\zeta_0) \cap Z( \Lambda_{\mathrm{con}} ) 
\quad\mbox{
and}
\quad
\mathrm{Ker}(\tilde{\zeta}_0) \cong \mathrm{Ker}\big(Z(\zeta_0)\big) 
 / \big(\mathrm{Ker}\big(Z(\zeta_0)\big) \cap N_{\Lambda} \big),
$$
where $N_{\Lambda}$ 
is the radical of $Z( \Lambda_{\mathrm{con}})$, we have 
$$
\mathrm{Ker}(\tilde{\zeta}_0) \cong \mathrm{Ker}(\zeta_0) \cap Z( \Lambda_{\mathrm{con}} ) 
/ \big(  \mathrm{Ker}(\zeta_0) \cap Z( \Lambda_{\mathrm{con}} ) \cap N_{\Lambda} \big). 
$$
Thus, to prove this theorem, it suffices to show that 
$$
\mathrm{Ker}(\zeta_0) \cap Z( \Lambda_{\mathrm{con}}) \subseteq N_{\Lambda}. 
$$

In fact, since
any element $g \in Z( \Lambda_{\mathrm{con}}) \cong  Z\big( \mathrm{End}
_{\underline{\mathrm{CM}}(S)}
( \bigoplus_{\chi} M_\chi ) \big)$,
we have $$g = \sum\limits_{\chi} g \circ \mathrm{Id}_{M_{\chi}} = 
\sum\limits_{\chi} g \circ \mathrm{Id}_{M_{\chi}} \circ \mathrm{Id}_{M_{\chi}}
 = \sum\limits_{\chi} (\mathrm{Id}_{M_{\chi}} \circ g \circ \mathrm{Id}_{M_{\chi}}) \in 
\prod_{\chi} \mathrm{End}_{\underline{\mathrm{CM}}(S)}( M_\chi ).$$ 
Here, 
$\mathrm{Id}_{M_{\chi}}$ is the identity map in 
$\mathrm{End}_{\underline{\mathrm{CM}}(S)}( M_{\chi} )$. 
And then we have  
\begin{align}\label{Inclu889}
Z\big(\mathrm{End}_{\underline{\mathrm{CM}}(S)}(\widehat{M}) \big) 
\cong Z\big( \mathrm{End}_{\underline{\mathrm{CM}}(S)}
( \oplus_{\chi} M_\chi ) \big) 
\subseteq  \prod_{\chi} \mathrm{End}_{\underline{\mathrm{CM}}(S)} ( M_\chi  )  
\end{align}
as algebras. Now by the argument 
in \S\ref{SSG} and Lemma \ref{IdemSpli}, we have  
$$
\prod_{\chi} \mathrm{End}_{\underline{\mathrm{CM}}(S)}( M_\chi) \cong 
\prod_{\chi} e_{\chi} \Lambda_{\mathrm{con}} 
e_{\chi} \cong \prod_{\chi} S/ \bar{I}_{\chi},
$$
and thus for any $\bar{h} \in  Z\big( \mathrm{End}_{\underline{\mathrm{CM}}(S)} 
( \bigoplus_{\chi} M_\chi  ) \big)$,
$\bar{h}$ can be written in the form
$
\sum_{\chi} \bar{h}_{\chi}
$, 
where $\bar{h}_\chi \in \mathrm{End}_{\underline{\mathrm{CM}}(S)}\big( M_\chi \big)
\cong  S/ \bar{I}_{\chi}$. 

Let $\bar{h}$ be an element in  
$\mathrm{Ker}(\zeta_0) \cap Z( \Lambda_{\mathrm{con}})$ and $\chi$ be a character of $G$.  
Without loss of generality, suppose that there is a nontrivial monomial $f \in S$ such that its image in 
$ e_{\chi} \Lambda_{\mathrm{con}} e_{\chi} \cong  S/ \bar{I}_{\chi}$ is exactly $\bar{h}_\chi$. 
Then to prove this theorem,  
it is enough to prove that $$f \in I_\chi.$$ 
In fact, if
$f \in I_\chi$, then $f^{m_\chi} \in \bar{I}_\chi$ 
for some $m_\chi \in \mathbb{N}$. Then $\bar{h}_\chi^{m_\chi} = 0$. Let $m$ be the maximal number in $\{m_\chi \}_\chi$. 
Then $\bar{h}^{m} = \big( \sum_{\chi} \bar{h}_{\chi} \big)^{m} = \sum_{\chi} \bar{h}^{m}_{\chi} = 0$. 
It implies that $\bar{h} \in N_{\Lambda}$.

We have the following two possibilities of $\chi$:
\begin{enumerate}
	\item[(1)] There is no pair $(H', \chi)$ 
	in $\tilde{G}_0$ such that $W_{H'}$ is nontrivial; 
	 
	\item[(2)] There is a pair $(H', \chi)$ in  
$\tilde{G}_0$ such that $W_{H'}$ is nontrivial.
\end{enumerate}
We shall show that in each case, $f\in I_\chi$.

\indexspace
\noindent{\it $\bm{Case }$ (1): There is no pair $(H', \chi)$ 
in $\tilde{G}_0$ such that $W_{H'}$ is nontrivial.}

Note that $W_{H'}$ is nontrivial means that the dimension of $W_{H'}$ is not zero. 

Let $\{f_r\}_r$ be the 
set of degree-one factors of $f$, i.e., $\{f_r\}_r \subseteq \{x_i\}_i$. 
For this case, 
without loss of generality, suppose that 
$x_1 \in \{f_r\}_r$. Let $G^\vee$ be the dual group of $G$. It consists of all character of $G$. 
Also let $\langle\chi_{x_1}\rangle \subseteq G^\vee$ be the subgroup generated by $\chi_{x_1}$. 
Since
$$
\#\big( \langle\chi_{x_1}\rangle \big)  \#\big(\mathrm{Ker}(\chi_{x_1})\big) = \#(G),
$$
where $\#(-)$ represents the number of elements in group,
to prove the theorem for case (1), 
we only need to consider the following 
two situations: either (i) $\mathrm{Ker}(\chi_{x_1})$ is trivial,
or (ii) $\mathrm{Ker}(\chi_{x_1})$ is non-trivial.
Let us study them case by case.

\indexspace
 
\noindent{\it $\bm{Subcase}$ (i): $\mathrm{Ker}(\chi_{x_1})$ is trivial.}
	
When $\mathrm{Ker}(\chi_{x_1})$ is trivial,  
we have $\#\big( \langle\chi_{x_1}\rangle \big) =  \#(G)$. 
It follows that $\langle\chi_{x_1}\rangle$ generates $G^\vee$ since $\#(G) = \#(G^\vee)$. 
Then $\chi_{x_1}^{m_1} = \chi_0 \chi^{-1}$ for some $m_1 \in \mathbb{N}$. It implies that 
$\chi \chi_{x_1^{m_1}} = \chi_0$. 
It means that $  
e_{\chi} (1 \otimes x_1^{m_1}) = e_{\chi} (1 \otimes x_1^{m_1}) e$ 
by (\ref{Comppp}). Thus, we get 
$$
e_\chi (1 \otimes x_1^{m_1}) \in e_{\chi} \Lambda e. 
$$
Then 
$$
e_\chi (1 \otimes x_1^{n_1 \#(G)}) \in e_{\chi} \Lambda e \Lambda e_{\chi}
$$
for some $n_1 \in \mathbb{N}$ such that $n_1 \#(G) \geq m_1$. 
Hence, when we view $f$ as a monomial, 
$e_{\chi}(1 \otimes f^{n_1 \#(G)}) \in e_{\chi} \Lambda e \Lambda e_{\chi}$,
since $x_1$ is a factor of $f$. 
By the isomorphism 
$S \cong e_\chi \Lambda e_{\chi}$,
it suggests that $f^{n_1 \#(G)} \in \bar{I}_{\chi}$ . Then we have 
$f \in I_{\chi}$. 

\indexspace
\noindent{\it $\bm{Subcase}$ (ii): $\mathrm{Ker}(\chi_{x_1})$ is non-trivial. }

If $\mathrm{Ker}(\chi_{x_1})$ is non-trivial, letting
$K := \mathrm{Ker}(\chi_{x_1})$, then we have   
$E_1 \in W_K$. Hence, since $K = \mathrm{Ker}(\chi_{x_1})$, the maximal group
whose invariant subspace is $W_K$ is exactly $K$. 
Then we get a pair $(K, W_K)$. 
Now, set
\begin{align}\label{126hyi}
\varpi^{K}: (G/K) \sharp R_K \cong 
\mathrm{End}_{S_K}(\widehat{M}^K) \rightarrow  \mathrm{End}_{S}(\widehat{M}) \cong \Lambda 
\end{align}
 be the composition $\rho^K$ (see (\ref{RhoH})) with the canonical 
 injection $\bigoplus_{i, j} \mathrm{Hom}_{S}
 \big( M_{\lambda^{K}_i}, M_{\lambda^{K}_j} \big) 
 \hookrightarrow \mathrm{End}_{S}(\widehat{M})$.
Since $\rho^{K}$ is an injection of algebras, 
$\varpi^{K}$ is also an injection of algebras. 

Since $\chi_0(K) = \{1 \}$, the trivial 
character $\chi_0$ of $G$ is in $\{\lambda^{K}_i\}$. Hence, 
by the definition of $\rho^K = \bigoplus_{i, j} \rho_{\lambda^{K}_i, \lambda^{K}_j}^K$, 
it is obvious that $e$ is in the image of $\varpi^{K}$. 
In fact, we have $\varpi^{K}(e^K) = e$, where $e^K$ is  
the idempotent in $(G/K) \sharp R_K$ corresponding to summand $S_K$ of $\widehat{M}^K$. 
Let $$\big((G/K) \sharp R_K\big)_{\mathrm{con}} := (G/K) \sharp R_K 
\big/ \big((G/K) \sharp R_K \big) e^K \big((G/K) \sharp R_K \big).$$ 
Then $\varpi^{K}$ induces another injection of algebras: 
$$\widetilde{\varpi}^{K}:
\big((G/K) \sharp R_K\big)_{\mathrm{con}} \hookrightarrow \Lambda_{\mathrm{con}}
$$
such that
the following diagram
\begin{align}\label{COM902738}
	\xymatrix{(G/K) \sharp R_K \ar@{^{(}->}[d]_{\varpi^K}  \ar@{->>}[rr]    && \big((G/K) \sharp R_K\big)_{\mathrm{con}}  \ar@{^{(}->}[d]^{\widetilde{\varpi}^{K}} 
\\
\Lambda \ar@{->>}[rr] && \Lambda_{\mathrm{con}}
}
\end{align}
commutes,
where 
the morphisms of horizontal direction are given by quotients by 
ideal $(e^K)$ and $(e)$ respectively.

By the above argument, there is a pair $(K, W_K)$ which means that $K$ is maximal among all subgroups which
have the same invariant subspace as that of $K$. 
Then $(K, \chi) \in \tilde{G}$. But $(K, \chi) \notin \tilde{G}_0$ by our
assumption. Thus $\chi(K) = \{1 \}$. 
Hence, $e_\chi$ is in the image of  
 $\varpi^K$. Let $\varpi^K(e_{\chi}^{K}) = e_{\chi}$. 
Thus, we get that $x_1 \in k[W_K] = R_K$ by $E_1 \in W_K$, 
and $e^{K}_\chi \in (G/K) \sharp R_K$. 
Hence, by the definition of $\varpi^{K}$, 
this algebra homomorphism maps $e^K_\chi (1 \otimes x_1^{m})$ 
to $e_\chi (1 \otimes x_1^{m})$ for any $m \in \mathbb{N}$.
Since $\widetilde{\varpi}^{K}$ is injective, 
$$e^K_\chi (1 \otimes x_1^{m}) \in 
e_{\chi}^K (G \sharp R) e^K$$ for some $m \in \mathbb{N}$ if and 
only if $e_\chi (1 \otimes x_1^{m}) \in e_{\chi} \Lambda e.$
Moreover, considering the character $\chi_{x_1}$ of $G/K$,
we know that the kernel of $\chi_{x_1}$
is trivial in $G/K$. 
From the above argument in subcase (i) and the fact that 
$\mathrm{Ker}( \chi_{x_1})$
is trivial in $G/K$, 
by replacing $\Lambda$ by $(G/K) \sharp R_K$, 
we have 
$$
e^K_\chi (1 \otimes x_1^{m}) \in e^K_{\chi} \big((G/K) \sharp R_K\big) e^K 
$$
for some $m_1 \in \mathbb{N}$. It implies that   
$e_\chi (1 \otimes x_1^{m}) \in e_{\chi} \Lambda e$. Thus, we get that $f \in I_{\chi} $ 
in the same way as in subcase (i). 

Combining the above two subcases (i) and (ii), we have $f \in I_{\chi} $ for case (1). 

\indexspace
\noindent{\it $\bm{Case}$ (2): There is a pair $(H', \chi)$ in  
$\tilde{G}_0$ such that $W_{H'}$ is nontrivial.}

For this case, observing that 
there is no pair $(H, \chi)$ in $\tilde{G}_0$ such that 
$\{f_r\}_r \subseteq R_H$, we know that $\{f_r\}_r \not\subset R_H$ 
for any $(H, \chi) \in \tilde{G}$ unless 
$\{f_r\}_r \subseteq R_{\bar{H}}$ 
for some pair $(\bar{H}, \chi) \in \tilde{G} \backslash 
\tilde{G}_0$. 
Thus to prove the theorem in this case, 
we only need to consider the following two situations:
either (i) $\{f_r\}_r \not\subset R_H$ 
for any $(H, \chi) \in \tilde{G}$, or (ii) $\{f_r\}_r \subseteq R_{\bar{H}}$ 
for some $(\bar{H}, \chi) \in \tilde{G} \backslash \tilde{G}_0$. 
Again, we study them case by case.

\indexspace
\noindent{\it $\bm{Subcase}$ (i): $\{f_r\}_r \not\subset R_H$ 
for any $(H, \chi) \in \tilde{G}$. }

When $\{f_r\}_r \not\subset R_H$ 
for any $(H, \chi) \in \tilde{G}$, by Lemma \ref{KeyPrLe1}, 
the image of 
$\bigodot\limits_{r} f_r^{\#(G)} \in S$ is trivial  in the coordinate ring of 
$\sqrt{\mathrm{Sing}\big( \mathrm{Spec}(S)\big)}$. It implies that 
$$
\bigodot\limits_{r} f_r^{\#(G)} \in \sqrt{\varphi(S) \cap \Lambda e \Lambda} \cong \bigcap\limits_\chi \sqrt{\bar{I}_{\chi}} \cong \bigcap\limits_\chi I_{\chi}
$$ 
by Theorem \ref{Singlocus1}. 
Then we have $\bigodot\limits_{r} f_r^{\#(G)} \in \sqrt{ e_\chi 
\Lambda e \Lambda e_\chi} = I_\chi$. Since 
$\bigodot\limits_{r} f_r$ is a factor of $f$, 
$f^{\#(G)} \in I_\chi$. 
It follows that $f \in I_\chi$.

\indexspace
\noindent{\it $\bm{Subcase}$ (ii): $\{f_r\}_r \subseteq R_{\bar{H}}$ 
for some $(\bar{H}, \chi) \in \tilde{G} \big\backslash \tilde{G}_0$. }

If $\{f_r\}_r \subseteq R_{\bar{H}}$ 
for some $(\bar{H}, \chi) \in \tilde{G} \big\backslash \tilde{G}_0$, then
we have 
$\chi(\bar{H}) = \{1\}$ and $f \in R_H$ since $\{f_r\}_r$ is the set of degree one 
factors of $f$. 
Let $e^{\bar{H}}_\chi$ and $e^{\bar{H}}$ be the idempotents of $G/\bar{H} \sharp R_{\bar{H}}$ 
corresponding to characters 
$\chi|_{G/\bar{H}}$ and $\chi_{0}|_{G/\bar{H}}$ respectively, and 
$\varpi^{\bar{H}}$ be the algebra homomorphism in (\ref{126hyi}) by 
replacing $K$ by $\bar{H}$. 
Then we have $\varpi^{\bar{H}}(e^{\bar{H}}_\chi) = e_\chi$ and 
$\varpi^{\bar{H}}(e^{\bar{H}}) = e$.  
Thus, since $f \in R_H$,  
$$
e_\chi (1 \otimes f) =  
\varpi^{\bar{H}}\big( e^{\bar{H}}_\chi (1 \otimes f) \big) 
\in \varpi^{\bar{H}}\big(G/\bar{H} \sharp R_{\bar{H}} \big)
$$ by the definition of $\varpi^{\bar{H}}$(see (\ref{126hyi})). 
By the same argument and the
commutative diagram (\ref{COM902738}) as in case (1), 
$e^{\bar{H}}_\chi (1 \otimes f) \in e^{\bar{H}}_{\chi} 
(G/\bar{H} \sharp R_{\bar{H}})e^{\bar{H}} $ if and only if 
$e_\chi (1 \otimes f) \in e_\chi \Lambda e$. 

Now, we consider $G/\bar{H} \sharp R_{\bar{H}}$ instead of $\Lambda$. 
Applying Lemma \ref{KeyPrLe1} to $G/\bar{H} \sharp R_{\bar{H}}$, we then
get $f \in I_\chi$ with the same argument
as in Case (1) or in Subcase (i) of Case (2), or reduce the 
discussion from $G/\bar{H} \sharp R_{\bar{H}}$ to a 
even smaller skew group algebra.  
Apply this procedure recursively, and finally we get that,
for any pair $(H, \chi)$ in $\tilde{G}$, $\chi(H) = \{1\}$.  
This case is exactly the case (1). Thus in this case we again have 
$f \in I_\chi$. 

Combining the above subcases (i) and (ii), we have $f \in I_{\chi} $ for case (2). 

In summary, we get that $f \in I_{\chi}$ for both cases,
which finishes the proof. 
\end{proof}

\section{Proof of Lemma \ref{Key7}}\label{App:C}

\begin{proof}[Proof of Lemma \ref{Key7}]
Fix a pair $(H, W_H)$. The set of morphisms 
$\{Z(\tilde{\zeta}^{\chi}_{H}) \}_{\chi}$ is in one to one correspondence with
the classes of characters in
 $\tilde{G}_0$. 
Moreover, by definition of $\tilde{G}_0$, for any $(H, \chi), (H, \chi') \in \tilde{G}_0$, $(H, \chi) \sim (H, \chi')$ in 
$\tilde{G}_0$ if and only if $\chi|_{H} = \chi'|_{H}$. It means that  
the classes of characters in $\tilde{G}_0$ is 
in one to one correspondence with the set of characters of $H$. 
Thus the set of morphisms 
$\{Z(\tilde{\zeta}^{\chi}_{H}) \}_{\chi}$ is in one to one correspondence 
with the set of characters of $H$, 
where this 
correspondence takes a character $\lambda$ of $H$ to morphism 
$Z(\tilde{\zeta}^{\tilde{\lambda}}_H)$ and $\tilde{\lambda}$ is a 
preimage of $\lambda$ under the group homomorphism 
$G^\vee \twoheadrightarrow H^\vee$ given by restriction of characters on $H$. 
Note that for any preimages $\tilde{\lambda}_1, \tilde{\lambda}_2$ of 
$\lambda$, $(\tilde{\lambda}_1, H) \sim (\tilde{\lambda}_2, H)$ in $\tilde{G}_0$.

In the meantime, the set of characters of $H$ is also in one to one correspondence with
the set of indecomposable idempotents of 
$\Lambda_{H} := H \sharp k[W/W_H]$. Then the set of morphisms 
$\{Z(\tilde{\zeta}^{\chi}_{H}) \}_{\chi}$ 
is again one to one correspondence with
the set of indecomposable idempotents of $\Lambda_{H}$.

Since the set of indecomposable idempotents of $\Lambda_{H}$ is in 
one to one correspondence 
with the vertices of the 
quiver $Q_{\Lambda_{H}}$ whose path algebra is $\Lambda_{H}$, 
the set of morphisms 
$\{Z(\tilde{\zeta}^{\chi}_{H}) \}_{\chi}$ is in one to one correspondence 
with the vertices of the 
quiver $Q_{\Lambda_{H}}$.

Let $(H, \chi), (H, \chi')$ be two different pairs in $\tilde{G}_0$. 
They correspond to two vertices of $Q_{\Lambda_{H}}$, say, 
$e'_{\chi}$ and $e'_{\chi'}$ respectively. Note that 
$H \subseteq \mathrm{SL}(W/W_H)$. Let $Q^{H}_{\mathrm{con}}$ be the quiver 
which is associated to the algebra $\Lambda_{H} \big/  \Lambda_H e^{H} \Lambda_{H}$, 
where $e^{H}$ is the idempotent corresponding to the trivial 
character of $H$. 
Consider $\Lambda_{H}$ instead of $\Lambda$, by Proposition \ref{Cl11}, we 
know that the two vertices corresponding to $e'_{\chi}$ and $e'_{\chi'}$ 
in $Q^{H}_{\mathrm{con}}$ are connected by a path, say $l$, in $Q^{H}_{\mathrm{con}}$.  

The path $l$ may be the concatenation of several arrows. 
To prove $Z(\tilde{\zeta}^{\chi}_{H}) = Z(\tilde{\zeta}^{\chi'}_{H})$, 
it is enough to show that for any arrow 
$\mathfrak{r}$ in the path $l$, 
$$
Z(\tilde{\zeta}^{\tilde{\lambda}^{\mathfrak{r}}_s}_{H}) 
= Z(\tilde{\zeta}^{\tilde{\lambda}_t^{\mathfrak{r}}}_{H}),
$$ 
where the characters $\tilde{\lambda}_s^{\mathfrak{r}}$ and 
$\tilde{\lambda}_t^{\mathfrak{r}}$ of $G$ are defined as follows. 
Let $\lambda_s^{\mathfrak{r}}$ and 
$\lambda_t^{\mathfrak{r}}$ be characters of $H$ such that they correspond to idempotents 
$e'_{\lambda_s^{\mathfrak{r}}}$ and 
$e'_{\lambda_t^{\mathfrak{r}}}$ respectively, where 
$e'_{\lambda_s^{\mathfrak{r}}}$ is the source of $\mathfrak{r}$
and $e'_{\lambda_t^{\mathfrak{r}}}$ is the target of 
$\mathfrak{r}$ in quiver $Q^{H}_{\mathrm{con}}$.  
Then $\tilde{\lambda}_s^{\mathfrak{r}}$ and 
$\tilde{\lambda}_t^{\mathfrak{r}}$ are two preimages of  
$\lambda_s^{\mathfrak{r}}$ and 
$\lambda_t^{\mathfrak{r}}$ under the surjection of groups
$G^\vee \twoheadrightarrow H^\vee$.  

Next, note that $W/W_H \cong W'_{H}$ as $G$-representations. 
Then $\Lambda_{H} \cong H \sharp k[W'_H]$. 
Since 
$\{E_i\}_i \big\backslash \mathfrak{S}_H$ 
gives a basis of $W'_H$,  
each arrow in the
McKay quiver $Q_{\Lambda_H}$ corresponds to a formal variable,
say $x_r \in \{x_i\}_i \big\backslash \mathfrak{S}_H^{\ast}$, 
where $\mathfrak{S}_H^{\ast}$ is given by the canonical 
dual of $\mathfrak{S}_H$ (which sends $x_i$ to $E_i$).
Hence, each arrow in $Q^H_{\mathrm{con}}$ corresponds to a formal variable,
say $x_r \in \{x_i\}_i \big\backslash \mathfrak{S}_H^{\ast}$, 

Fix an arrow $\mathfrak{r}$ in $l$. 
Now, we suppose that $\mathfrak{r}$ corresponds to 
formal variable $x_r$. For simplicity, we also suppose that 
$\lambda_s^{\mathfrak{r}} = \chi|_{H}$ and 
$\tilde{\lambda}_s^{\mathfrak{r}} = \chi$. 
Then, 
it is easy to check that the image of 
$\chi\chi_{x_r}$ under $G^\vee \twoheadrightarrow H^\vee$ 
is exactly $\lambda_t^{\mathfrak{r}} = \lambda_s^{\mathfrak{r}} \chi'_{x_r} = \chi|_{H} \chi'_{x_r}$,
since the group homomorphism  
$G^\vee \twoheadrightarrow H^\vee$ takes $\chi_{x_r}$ to 
$\chi'_{x_r}$,  
where $\chi'_{x_r}$ is the character of $H$ given by the formal variable $x_r$.
Hence, we can choose 
 $\chi\chi_{x_r}$ to be $\tilde{\lambda}_t^{\mathfrak{r}}$. 
Thus, it is left to show that 
$$
Z(\tilde{\zeta}^{\chi}_{H}) = Z(\tilde{\zeta}^{\chi\chi_{x_r}}_{H}). 
$$

Let $\bar{h} = \sum_{\lambda} \bar{h}_{\lambda}$ 
be an element in $Z(\Lambda_{\mathrm{con}})$. 
In the following, for any character $\lambda$ of $G$, 
let $h_{\lambda}$ be an element in $S$ such that 
its image is exactly $\bar{h}_{\lambda}$ 
in $S/\bar{I}_{\lambda} = 
e_{\lambda} \Lambda_{\mathrm{con}} e_{\lambda}$.
From the construction of 
$Z(\tilde{\zeta}^{\chi}_{H})$, we have 
\begin{align*}
Z(\tilde{\zeta}^{\chi}_{H})(\bar{h}) & = \zeta^{\chi}_{H}(\sum_{\lambda} 
h_\lambda ) = \tau^{H} \circ \Pi_{(H, \chi)}(\sum_{\lambda} h_\lambda)  \\ 
& = \tau^{H}\big(\sum_{ \chi^{-1}\lambda \in \{\lambda_i^{H}\}_i } h_\lambda \big)  
= \sum_{\chi^{-1}\lambda \in \{\lambda_i^{H}\}_i} 
\tau_{\chi^{-1}\lambda , \chi^{-1}\lambda }^{H}(h_{\lambda}) \in Z(G/H \sharp R_H).
\end{align*} 
Note that the center of $G/H \sharp R_H$ is the images of canonical injection 
$S_H \hookrightarrow  G/H \sharp R_H \cong \mathrm{End}_{S_H}(\widehat{M}^H)$,
which is given by its $S_H$-module structure. 
This injection induces the isomorphism of algebras 
$$
S_H \cong Z(G/H \sharp R_H).
$$
Under the inverse of this isomorphism 
we identify $\sum\limits_{\chi^{-1}\lambda \in \{\lambda_i^{H}\}_i} 
\tau_{\chi^{-1}\lambda , \chi^{-1}\lambda }^{H}(h_{\lambda})$ with $\tau_{\chi_0, \chi_0}^{H}(h_{\chi}) \in S_H$ by letting $\lambda = \chi$. 
Here, note that $\tau_{\chi^{-1}\lambda , \chi^{-1}\lambda }^{H}(h_{\lambda}) = \tau_{\chi^{-1}\lambda', \chi^{-1}\lambda' }^{H}(h_{\lambda'})$ as elements in $S_H$ for any 
$\chi^{-1}\lambda, \chi^{-1}\lambda' \in \{\lambda_i^{H}\}$.
Then we get that 
$$Z(\tilde{\zeta}^{\chi}_{H})(\bar{h}) = \tau_{\chi_0, \chi_0}^{H}(h_{\chi}) \in S_H.$$
In the same way, we get that 
$$
Z(\tilde{\zeta}^{\chi \chi_{x_r}}_{H})(\bar{h}) = \tau_{\chi_0, 
\chi_0}^{H}(h_{\chi \chi_{x_r}}) \in S_H. 
$$  
Hence, to prove that 
$Z(\tilde{\zeta}^{\chi}_{H}(\bar{h})) = Z(\tilde{\zeta}^{\chi \chi_{x_r}}_{H}(\bar{h}))$, 
it is sufficient to show that 
$\tau_{\chi_0, \chi_0}^{H}(h_{\chi}) = \tau_{\chi_0, \chi_0}^{H}(h_{\chi \chi_{x_r}})$. 
Since the algebra homomorphism 
$\tau_{\chi_0 , \chi_0}^{H}$ is
given by the canonical quotient map
$$
S \cong (k[W_H] \otimes k[W'_{H}])^G \twoheadrightarrow
k[W_H]^{(G/H)} = S_{H}  
$$
with kernel $I_H$, it is left to prove that $h_\chi - h_{\chi \chi_{x_r}}
\in I_{H}$.

Recall that $\bar{I}_\chi = e_{\chi} \Lambda e \Lambda e_{\chi}$. 
Note that $\bar{I}_{\chi} \cong e_{\chi} \Lambda e 
\Lambda e_{\chi} \cong  e \Lambda e_{\chi^{-1}} \Lambda e$ as 
ideals of $S$, since $e_{\chi} \Lambda e \cong e 
\Lambda e_{\chi^{-1}}$ and $e \Lambda e_{\chi} \cong e_{\chi^{-1}} \Lambda e$ 
by Lemma \ref{Equi}. 
Since 
\begin{align*}
 \tau_{\chi_0, \chi_0}^{H}(\bar{I}_{\chi}) & \cong \tau_{\chi_0, \chi_0}^{H}(e \Lambda e_{\chi^{-1}} \Lambda e)  \cong \tau^{H}( e \Lambda e_{\chi^{-1}} \Lambda e)\\
 & \cong \tau^{H} \circ \Pi_{(H, \chi)} \big( e_{\chi} \Lambda e \Lambda e_{\chi} \big) 
  \cong \zeta^{\chi}_{H}
 \big( e_{\chi} \Lambda e \Lambda e_{\chi} \big)
  \overset{(\ref{Key2})}{=} \{0\},
\end{align*}
we have 
$
\bar{I}_{\chi} \subseteq \mathrm{Ker}\big( \tau_{\chi_0, \chi_0}^{H} \big) \cong I_H
$.
In the same way, we also have 
$
\bar{I}_{\chi \chi_{x_r}} \subseteq I_H
$.  
It is straightforward to see that either $h_\chi - h_{\chi \chi_{x_r}} \in \bar{I}_{\chi}$, 
or $h_\chi - h_{\chi \chi_{x_r}} \in \bar{I}_{\chi \chi_{x_r}}$. 	
Then we get that $h_\chi - h_{\chi \chi_{x_r}}
\in I_{H}$ since both $\bar{I}_{\chi} \subseteq I_H$ 
and $\bar{I}_{\chi \chi_{x_r}} \subseteq I_H$. 
It follows that 
$Z(\tilde{\zeta}^{\chi}_H) = Z(\tilde{\zeta}^{\chi \chi_{x_r}}_H)$.
We thus completed the proof. 
\end{proof}

\section{Proof of Lemma \ref{Inters}}\label{App:D}

\begin{proof}[Proof of Lemma \ref{Inters}]
First, let $(H, \chi) \in \tilde{G}_0$. Then 
since 
$H \subseteq G_{H, H'}$,
$(G_{H, H'}, \chi) \in \tilde{G}_0$.
Thus to prove this lemma, it suffices 
to show that  
\begin{align}\label{Ceee1}
Z(\zeta_{(G_{H, H'}/H)}) \circ Z(\zeta^{\chi}_{H}) = Z(\zeta^{\chi}_{G_{H, H'}}).
\end{align}
And to prove \eqref{Ceee1}, it suffices to show that 
\begin{align}\label{Ceee2}
\zeta_{(G_{H, H'}/H)} \circ \zeta^{\chi}_{H} = \zeta^{\chi}_{G_{H, H'}}. 
\end{align}

To this end, denote by 
$Pr^{H}_{(G_{H, H'}, \chi)}$ the canonical 
projection $$\mathrm{Hom}_{S}\big(\bigoplus_i M_{\lambda^{(H, \chi)}_i}, 
\bigoplus_i M_{\lambda^{(H, \chi)}_i} \big) 
\twoheadrightarrow \mathrm{Hom}_{S}\big(\bigoplus_i M_{\lambda^{(G_{H, H'}, \chi)}_i}, 
\bigoplus_i M_{\lambda^{(G_{H, H'}, \chi)}_i} \big), 
$$
and let $Pr^{H}_{G_{H, H'}} := Pr^{H}_{(G_{H, H'}, \chi_0)}$ due to 
$\{ \lambda^{(G_{H, H'}, \chi)}_i \} \subseteq \{\lambda^{(H, \chi)}_i \}$. 
Then we have 
\begin{align} 
Pr^{H}_{G_{H, H'}} \circ \Pi_{(H, \chi)} & = Pr^{H}_{G_{H, H'}} 
\circ \Delta_{(H, \chi)} \circ  Pr_{(H, \chi)} \nonumber \\
& = \Delta_{(G_{H, H'}, \chi)} \circ Pr^{H}_{(G_{H, H'}, \chi)} \circ  Pr_{(H, \chi)}  \nonumber \\
& = \Delta_{(G_{H, H'}, \chi)} \circ Pr_{(G_{H, H'}, \chi)} \nonumber \\ 
& = \Pi_{(G_{H, H'}, \chi)}. \label{COMM9864} 
\end{align}
In the meantime, it is easy to check that 
\begin{align}\label{COMM98655}
Pr_{G_{H, H'}/H} \circ \tau^{H} = 
\prod_{\chi_j, \chi_r \in\big\{ \lambda^{G_{H, H'}}_i \big\}_i} 
\tau^{H}_{\chi_j \chi_r } \circ Pr^{H}_{G_{H, H'}}.
\end{align}
Moreover, for any two
$\chi_j, \chi_r \in \{ \lambda^{G_{H, H'}}_i \}_i$,
both of them are in $\{ \lambda^{H}_i \}_i$ 
since $\{ \lambda^{G_{H, H'}}_i \}_i 
\subseteq \{ \lambda^{H}_i \}_i$.  
From the following commutative diagram
of canonical projections of polynomial rings
$$
\xymatrixcolsep{2pc}
\xymatrixrowsep{2pc}
\xymatrix{ k[V]
\ar[rr] \ar[rd] &&   k[W_{G_{H, H'}}] \\ 
& k[W_H]\ar[ru],&
}
$$
we obtain the following commutative diagram:
$$
\xymatrixcolsep{2pc}
\xymatrixrowsep{2pc}
\xymatrix{ \big( V_{\chi_j^{-1} \lambda_r} \otimes k[V] \big)^{G}
\ar[rr]^-{\tau^{^{G_{H, H'}}}_{\chi_j, \chi_r}} \ar[rd]_-{\tau^{H}_{\chi_j, \chi_r}} 
&&  \big( V_{\chi_j^{-1} \lambda_r} 
\otimes k[W_{G_{H, H'}}] \big)^{G /G_{H, H'}} \\ 
& \big( V_{\chi_j^{-1} \chi_r} \otimes k[W_H] \big)^{G/H}  
\ar[ru]_---{\tau^{^{(G_{H, H'}/H)}}_{\chi_j, \chi_r}} &
}
$$
That is,
\begin{align}\label{Labb213}
\tau^{^{G_{H, H'}}}_{\chi_j, \chi_r} = 
\tau^{^{(G_{H, H'}/H)}}_{\chi_j, \chi_r} \circ \tau^{H}_{\chi_j, \chi_r}.
\end{align}
It suggests that 
\begin{align}\label{Labb21334}
\tau^{(G_{H, H'}/H)} = \tau^{H} \circ 
\prod_{\chi_j, \chi_r \in\big\{ \lambda^{G_{H, H'}}_i \big\}_i} \tau^{H}_{\chi_j \chi_r }.
\end{align}
By the definition of $\zeta^{\chi}_{G_{H, H'}}$ (see (\ref{Seccc})), we know that 
\begin{align}\label{Bro12}
\zeta^{\chi}_{G_{H, H'}} = \tau^{G_{H, H'}} \circ \Pi_{(G_{H, H'}, \chi)}
\end{align}
and 
\begin{align}\label{Bro13}
\zeta_{(G_{H, H'}/H)} \circ \zeta^{\chi}_{H} 
= \big( \tau^{(G_{H, H'}/H)} \circ Pr_{G_{H, H'}/H} \big) \circ \big( \tau^{H} \circ \Pi_{(H, \chi)} \big).
\end{align}
Combining (\ref{COMM9864}), (\ref{COMM98655}) and (\ref{Labb21334}), we have 
\begin{align*}
& \big( \tau^{(G_{H, H'}/H)} \circ Pr_{G_{H, H'}/H} \big) 
\circ \big( \tau^{H} \circ \Pi_{(H, \chi)} \big) \\
&  = \tau^{(G_{H, H'}/H)} \circ 
\prod_{\chi_j, \chi_r \in
\big\{ \lambda^{G_{H, H'}}_i \big\}_i} \tau^{H}_{\chi_j \chi_r } 
\circ Pr^{H}_{G_{H, H'}} \circ \Pi_{(H, \chi)} \\ 
& = \tau^{G_{H, H'}} \circ \Pi_{(G_{H, H'}, \chi)}.
\end{align*}
It follows that 
$\zeta_{(G_{H, H'}/H)} \circ \zeta^{\chi}_{H} 
= \zeta^{\chi}_{G_{H, H'}}$ from (\ref{Bro12}) and (\ref{Bro13}). 
We thus get \eqref{Ceee2}, and the lemma follows.
\end{proof}

\section{Proof of Proposition \ref{Cl22}}\label{App:E}

Let us first 
recall the following fact on $\underline{\mathrm{CM}}(S)$. 
For any Cohen-Macaulay $S$-module $M$, 
let $\Omega(M)$ be the object
 in $\underline{\mathrm{CM}}(S)$, which is the kernel  
of a(ny) surjection $\pi_M : S^{\oplus n} \twoheadrightarrow M$.
Note that $\Omega(M)$ is unique in 
$\underline{\mathrm{CM}}(S)$, but not in 
$\mathrm{mod}(S)$. It is well-known that $[-1] = \Omega(-)$ in 
$\underline{\mathrm{CM}}(S)$.

For any two integers $l, l'\in\mathbb Z$ with $l' \geq l$,
let $\widehat{M}(l, l') = \bigoplus_{i = l}^{l'}\widehat{M}[i]$ as above.
Note that if $l'>0>l$, then $\widehat{M}[0] = \widehat{M} $ 
is a direct summand of $\widehat{M}(l, l')$. 
We have the following lemma. 

\begin{lemma}\label{Conne2}
 $R^{\widehat{M}} = R^{\widehat{M}(l, l')}$ for any 
 $l, l' \in \mathbb{Z}$ with $l' > 0 > l$. 
\end{lemma}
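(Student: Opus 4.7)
The plan is to prove that the natural $S$-algebra surjection
$\varphi_{\widehat{M}}^{\widehat{M}(l,l')} : R^{\widehat{M}(l,l')} \twoheadrightarrow R^{\widehat{M}}$,
which exists because $\widehat{M} = \widehat{M}[0]$ is a direct summand of $\widehat{M}(l,l')$ (this uses the hypothesis $l' > 0 > l$), is actually an isomorphism. First, applying Lemma \ref{Lefttt} with $M = \widehat{M}$ and $M' = \widehat{M}(l,l')$ yields the factorization
$\varphi_{\widehat{M}} = \varphi_{\widehat{M}}^{\widehat{M}(l,l')} \circ \varphi_{\widehat{M}(l,l')}$,
and since $\varphi_{\widehat{M}}$ is already an isomorphism by Theorem \ref{Theo22}, the map $\varphi_{\widehat{M}(l,l')}$ is injective. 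In particular, $\varphi_{\widehat{M}(l,l')}$ realizes the coordinate ring $\varphi(S)/\sqrt{\varphi(S) \cap \Lambda e \Lambda} \cong R^{\widehat{M}}$ of the reduced singular locus as a subring of $R^{\widehat{M}(l,l')}$, providing a section for $\varphi_{\widehat{M}}^{\widehat{M}(l,l')}$. It therefore suffices to prove injectivity of $\varphi_{\widehat{M}}^{\widehat{M}(l,l')}$, or equivalently that this subring already exhausts $R^{\widehat{M}(l,l')}$.

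The key tool is the combination of two observations. First, as an $S$-module, the ring
$\mathrm{End}_{\underline{\mathrm{CM}}(S)}(\widehat{M}(l,l')) = \bigoplus_{i,j=l}^{l'} \mathrm{Hom}_{\underline{\mathrm{CM}}(S)}(\widehat{M}[i], \widehat{M}[j])$
is supported on the singular locus of $\mathrm{Spec}(S)$: for any regular prime $\mathfrak{p}$ of $S$, the localized stable category $\underline{\mathrm{CM}}(S_{\mathfrak{p}})$ is trivial (since every Cohen-Macaulay module over a regular local ring is projective), so all these $\mathrm{Hom}$-groups vanish after localizing at $\mathfrak{p}$. Consequently $R^{\widehat{M}(l,l')}$ is a reduced, finitely generated $S$-algebra whose $\mathrm{Spec}$ lies set-theoretically over the singular locus. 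Second, the shift $[1]$ is an $S$-linear autoequivalence of $\underline{\mathrm{CM}}(S)$, so for each $i \in [l,l']$ it induces an $S$-algebra isomorphism $R^{\widehat{M}[i]} \cong R^{\widehat{M}}$ together with a surjection $R^{\widehat{M}(l,l')} \twoheadrightarrow R^{\widehat{M}[i]} \cong R^{\widehat{M}}$ arising from the corresponding direct summand. By Lemma \ref{Lefttt} all of these surjections restrict to the same isomorphism on $\mathrm{Im}(\varphi_{\widehat{M}(l,l')})$.

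To finish, one takes an arbitrary $z \in R^{\widehat{M}(l,l')}$, lifts it to a central element represented by a tuple $(z_i)_{i=l}^{l'}$ in $Z\big(\mathrm{End}_{\underline{\mathrm{CM}}(S)}(\widehat{M}(l,l'))\big)$, and uses the graded centrality condition $z_j \circ \alpha = \alpha \circ z_i$ for every $\alpha \in \mathrm{Hom}_{\underline{\mathrm{CM}}(S)}(\widehat{M}[i], \widehat{M}[j])$. Under the shift identification $Z(\mathrm{End}(\widehat{M}[i])) \cong Z(\mathrm{End}(\widehat{M}))$, Theorem \ref{Theo22} provides a unique $s \in \varphi(S)/\sqrt{\varphi(S) \cap \Lambda e \Lambda}$ with $z_0 \equiv s \cdot \mathrm{id}_{\widehat{M}}$ modulo nilpotents; the plan is to show that after passing to the reduced quotient every other $z_i$ also agrees with the diagonal scalar $s \cdot \mathrm{id}_{\widehat{M}[i]}$, forcing $z = \varphi_{\widehat{M}(l,l')}(s)$ in $R^{\widehat{M}(l,l')}$. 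The main obstacle lies precisely in this propagation step: one must exploit the hypothesis $l' > 0 > l$ (so that both positive and negative shifts are present) together with the graded compatibility across Tate $\mathrm{Ext}$ classes in all degrees, and the classical generation of $\underline{\mathrm{CM}}(S)$ by $\widehat{M}$ coming from Lemma \ref{Local}, to squeeze enough rigidity out of $R^{\widehat{M}(l,l')}$ to conclude $z_i - s \cdot \mathrm{id}_{\widehat{M}[i]}$ is nilpotent for each $i$.
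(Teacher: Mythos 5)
Your reduction is correct: since $\widehat{M}$ is a direct summand of $\widehat{M}(l,l')$ (here is the one place $l'>0>l$ is used), Lemma \ref{Lefttt} and Theorem \ref{Theo22} give the factorization $\varphi_{\widehat{M}} = \varphi^{\widehat{M}(l,l')}_{\widehat{M}}\circ\varphi_{\widehat{M}(l,l')}$ with $\varphi_{\widehat{M}}$ an isomorphism, so it suffices to show $\varphi_{\widehat{M}(l,l')}$ is surjective. Your preliminary observations (support on the singular locus, shift invariance $R^{\widehat{M}[i]}\cong R^{\widehat{M}}$) are also correct. But you stop precisely at the point where the lemma's entire content lies: your final paragraph says ``the plan is to show that after passing to the reduced quotient every other $z_i$ also agrees with the diagonal'' and then states ``the main obstacle lies precisely in this propagation step'' without overcoming it. Pointing at ``graded compatibility across Tate Ext classes'' and ``rigidity'' is not an argument; you have identified what must be proved without proving it.

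The paper's own proof handles this propagation by induction, starting from $\widehat{M}\oplus\Omega(\widehat{M})$. The crux there is a concrete nilpotency argument: given a central element of $\mathrm{End}_{\underline{\mathrm{CM}}(S)}(\widehat{M}\oplus\Omega(\widehat{M}))$ lifting $(0,\bar g)$, raise it to a power killing the $\widehat{M}$-component so that the lift becomes $(0, g_{1,2}^{n'})$; centrality then forces $g_{1,2}^{n'}\circ f = f\circ 0 = 0$ for every $f\in\mathrm{Hom}_{\underline{\mathrm{CM}}(S)}(\widehat{M},\Omega(\widehat{M}))$, and from there one concludes $g_{1,2}$ is nilpotent. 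That last step implicitly uses that $\Omega(\widehat{M})$ lies in $\mathrm{add}(\widehat{M})$ in $\underline{\mathrm{CM}}(S)$ --- a structural fact specific to abelian quotient singularities via Auslander's theorem, since every indecomposable Cohen--Macaulay summand is some $M_\chi$. You never invoke this closure property (or any substitute such as Lemma \ref{Conne}), and without it your ``squeeze enough rigidity'' step has no mechanism. The proposal therefore has a genuine gap: the reduction is sound, but the nilpotency of the off-diagonal part is asserted as a goal rather than demonstrated.
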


\begin{proof}
First, similarly to (\ref{Inclu889}) it is easy to check that 
\begin{align}\label{Inclu893567}
Z\Big( \mathrm{End}_{\underline{\mathrm{CM}}(S)}
\big( \widehat{M} \oplus \Omega(\widehat{M}) \big) \Big) 
\subseteq Z\Big( \mathrm{End}_{\underline{\mathrm{CM}}(S)}
\big( \widehat{M} \big) \Big) \oplus Z
\Big( \mathrm{End}_{\underline{\mathrm{CM}}(S)}\big( 
\Omega(\widehat{M}) \big) \Big)
\end{align} 
as algebras. 
When considering the associated reduced rings, 
since $\mathrm{End}_{\underline{\mathrm{CM}}(S)}
\big( \Omega(\widehat{M}) \big) \cong 
\mathrm{End}_{\underline{\mathrm{CM}}(S)}(\widehat{M} )$ we have  
\begin{align}\label{Incl673}
R^{\widehat{M} \oplus \Omega(\widehat{M})} 
\subseteq R^{\widehat{M}} \oplus R^{\Omega( \widehat{M})} 
\cong R^{\widehat{M}} \oplus R^{\widehat{M}}.
\end{align}
Here, the compositions 
\begin{align}\label{Comp073579}
R^{\widehat{M} \oplus \Omega(\widehat{M})} \subseteq 
R^{\widehat{M}} \oplus R^{\Omega( \widehat{M})} \twoheadrightarrow R^{\widehat{M}} 
\end{align}
and 
\begin{align}\label{Comp0735888}
R^{\widehat{M} \oplus \Omega(\widehat{M})} \subseteq 
R^{\widehat{M}} \oplus R^{\Omega( \widehat{M})} 
\twoheadrightarrow R^{\Omega( \widehat{M})} 
\end{align}
are exactly $\varphi^{\widehat{M} \oplus 
\Omega(\widehat{M})}_{\widehat{M}}$ and 
$\varphi^{\widehat{M} \oplus \Omega(\widehat{M})}_{\Omega(\widehat{M})}$ respectively, 
where the surjections are the natural projections. 
By Theorem \ref{Theo22}, from (\ref{Incl673}) we get that 
\begin{align}\label{Incl684}
R^{\widehat{M} \oplus \Omega(\widehat{M})} \subseteq 
\varphi(S) / \sqrt{\varphi(S) \cap \Lambda e \Lambda}
\oplus \varphi(S) / \sqrt{\varphi(S) \cap \Lambda e \Lambda}
\end{align}
as algebras. 
In the following, we identify $R^{\widehat{M} \oplus \Omega(\widehat{M})}$ 
with the image of the above inclusion (\ref{Incl684}). 

Observe that since 
$\varphi_{\Omega(\widehat{M})} = \varphi^{\widehat{M} \oplus 
\Omega(\widehat{M})}_{\Omega(\widehat{M})} \circ \varphi_{\widehat{M} 
\oplus \Omega(\widehat{M})}$ 
(see (\ref{Comp0735888})),
the composition  with (\ref{Comp0735888})
\begin{align}\label{Comp0946}
\varphi(S) / \sqrt{\varphi(S) \cap \Lambda e \Lambda} 
\xrightarrow{\varphi_{\widehat{M} \oplus \Omega(\widehat{M} )}}  
R^{\widehat{M} \oplus \Omega(\widehat{M})} \subseteq R^{\widehat{M}} 
\oplus R^{\Omega( \widehat{M})} \twoheadrightarrow  R^{\Omega( \widehat{M})}
\end{align}
is exactly $\varphi_{\Omega(\widehat{M})}$. 
Since 
the composition 
$$ 
\varphi(S) / \sqrt{\varphi(S) \cap \Lambda e \Lambda} 
\xrightarrow{\varphi_{\Omega( \widehat{M})}} 
R^{\Omega(\widehat{M})} \cong R^{\widehat{M}} 
$$
is exactly $\varphi_{\widehat{M}}$, by (\ref{Comp0946}) and (\ref{Comp073579}) 
the composition 
\begin{align}\label{Comp0977}
& \varphi(S) / \sqrt{\varphi(S) \cap \Lambda e \Lambda} 
\xrightarrow{\varphi_{\widehat{M} \oplus \Omega(\widehat{M} )}}  
R^{\widehat{M} \oplus \Omega(\widehat{M})}  \subseteq 
R^{\widehat{M}} \oplus R^{\Omega( \widehat{M})} \nonumber\\
 &\cong R^{\widehat{M}} \oplus R^ {\widehat{M}} \cong 
 \varphi(S) \big/ \sqrt{\varphi(S) \cap \Lambda e \Lambda} 
 \oplus \varphi(S) / \sqrt{\varphi(S) \cap \Lambda e \Lambda} 
\end{align}
is the diagonal algebra homomorphism,
where the isomorphism 
$R^ {\widehat{M}} \cong \varphi(S) / \sqrt{\varphi(S) 
\cap \Lambda e \Lambda}$ is the inverse of $\varphi_{\widehat{M}}$.

In summary, we have the following commutative diagram of homomorphisms:
$$
\xymatrixcolsep{2pc}
\xymatrix{
&\varphi(S) \big/ \sqrt{\varphi(S) \cap \Lambda e \Lambda}
\ar[d]^-{\varphi_{\widehat{M} \oplus \Omega(\widehat{M})}}
\ar@/_1cm/[lddd]_{\varphi_{\widehat{M}}} \ar@/^1cm/[rddd]^{\varphi_{\Omega(\widehat{M})}} 
&\\
&  R^{\widehat{M} \oplus \Omega(\widehat{M})}\ar@{^(->}[d]^{ \varphi^{\widehat{M} 
\oplus \Omega(\widehat{M})}_{\widehat{M}} \oplus \varphi^{\widehat{M} 
\oplus \Omega(\widehat{M})}_{\Omega(\widehat{M})} } &\\
& R^{\widehat{M}} \oplus R^{\Omega(\widehat{M})} \ar@{->>}[ld]\ar@{->>}[rd]&\\
R^{\widehat{M}}\ar[rr]^{\sim} \ar[d]_{\varphi^{-1}_{\widehat{M}}} &&
R^{\Omega(\widehat{M})} \ar[d]^{\varphi^{-1}_{\Omega(\widehat{M})}} \\
\varphi(S) \big/ \sqrt{\varphi(S) \cap \Lambda e \Lambda}\ar[rr]^{\mathrm{Id}}
&& \varphi(S) \big/ \sqrt{\varphi(S) \cap \Lambda e \Lambda}.
}$$

Now, let $(\bar{f}_1, \bar{f}_2) \in R^{\widehat{M} \oplus \Omega(\widehat{M})}$ such that 
$\bar{f}_1, \bar{f}_2 \in \varphi(S) 
/ \sqrt{\varphi(S) \cap \Lambda e \Lambda}$ (see \eqref{Incl684}). 
Since the composition 
(\ref{Comp0977}) is the diagonal algebra homomorphism,
the algebra homomorphism 
$\varphi_{\widehat{M} \oplus \Omega(\widehat{M})}$ takes $\bar{f}_1$ to 
$(\bar{f}_1, \bar{f}_1) \in  R^{\widehat{M} \oplus \Omega(\widehat{M})}$.
Then $(\bar{f}_1, \bar{f}_2) - (\bar{f}_1,\bar{f}_1) = (0, 
\bar{f}_2 -\bar{f}_1)$ is also in $R^{\widehat{M} \oplus \Omega(\widehat{M})}$. 
Let 
$(\delta, g_{1, 2}) \in Z\Big( \mathrm{End}_{\underline{\mathrm{CM}}(S)}\big( 
\widehat{M} \oplus \Omega(\widehat{M}) \big) \Big)$ whose image is 
$(0, \bar{f}_2 -\bar{f}_1)$ under the quotient of nilradical. 
Here, $\delta \in Z\big( \mathrm{End}_{\underline{\mathrm{CM}}(S)}( 
\widehat{M}) \big)$ and $g_{1, 2} \in Z\Big( \mathrm{End}_{\underline{\mathrm{CM}}(S)}\big(\Omega(\widehat{M}) \big) \Big)$.

Note that the nilradical of $Z\Big( 
\mathrm{End}_{\underline{\mathrm{CM}}(S)}
\big( \widehat{M} \oplus \Omega(\widehat{M}) \big) \Big)$ is contained in the 
 nilradical of $Z\Big( \mathrm{End}_{\underline{\mathrm{CM}}(S)}
 \big( \widehat{M} \big) \Big) \oplus Z\Big( \mathrm{End}_{\underline{\mathrm{CM}}(S)}\big( \Omega(\widehat{M}) \big) \Big)$ 
 by the inclusion (\ref{Inclu893567}). Moreover, 
 the 
 nilradical of $Z\Big( \mathrm{End}_{\underline{\mathrm{CM}}(S)}
 \big( \widehat{M} \big) \Big) \oplus Z\Big( \mathrm{End}_{\underline{\mathrm{CM}}(S)}\big( \Omega(\widehat{M}) \big) \Big)$ is the direct sum 
of the 
nilradicals of $Z\Big( \mathrm{End}_{\underline{\mathrm{CM}}(S)}
\big( \widehat{M} \big) \Big)$ and 
$Z\Big( \mathrm{End}_{\underline{\mathrm{CM}}(S)}\big( 
\Omega(\widehat{M}) \big) \Big)$. It suggests that 
$\delta$ is in the nilradicals of $Z\Big( \mathrm{End}_{\underline{\mathrm{CM}}(S)}
\big( \widehat{M} \big)$. 
Then, $\delta^{n'} = 0$ in $Z\Big( \mathrm{End}_{\underline{\mathrm{CM}}(S)}
( \widehat{M} ) \Big)$ for some $n' \in \mathbb{N}$. 
It implies that 
$$
(\delta, g_{1, 2})^{n'} = (0, g^{n'}_{1, 2}) \in 
Z\Big( \mathrm{End}_{\underline{\mathrm{CM}}(S)}\big( 
\widehat{M} \oplus \Omega(\widehat{M}) \big) \Big). 
$$
Thus, for any two characters $\lambda, 
\lambda'$ of $G$ and $f \in \mathrm{Hom}_{\underline{\mathrm{CM}}(S)}\big(M_{\lambda}, 
\Omega( M_{\lambda'})\big) \subseteq \mathrm{End}_{\underline{\mathrm{CM}}(S)}
\big( \widehat{M} \oplus \Omega(\widehat{M}) \big)$, 
$g^{n'}_{1, 2} \circ f = f \circ 0 = 0$.
It is straightforward to see that $g_{1, 2}$ is nilpotent, i.e., 
$\bar{f}_1 - \bar{f}_2 = 0$ in 
$\varphi(S) / \sqrt{\varphi(S) \cap \Lambda e}$. 
It follows that $(\bar{f}_1, \bar{f}_2) = (\bar{f}_1, \bar{f}_1)$ in 
$R^{\widehat{M} \oplus \Omega(\widehat{M})}$. 
Thus, the image of the inclusion 
$$
R^{\widehat{M} \oplus \Omega(\widehat{M})} \subseteq \varphi(S) 
/ \sqrt{\varphi(S) \cap \Lambda e \Lambda}
\oplus \varphi(S) / \sqrt{\varphi(S) \cap \Lambda e \Lambda}
$$
is exactly the image of the diagonal algebra homomorphism 
$$
\Delta_{S}: \varphi(S) / \sqrt{\varphi(S) \cap \Lambda e \Lambda}
\hookrightarrow \varphi(S) / \sqrt{\varphi(S) \cap \Lambda e \Lambda}
\oplus \varphi(S) / \sqrt{\varphi(S) \cap \Lambda e \Lambda}.
$$
It follows that the image of 
$R^{\widehat{M} \oplus \Omega(\widehat{M})}$ is isomorphic to 
$\mathrm{Im}(\Delta_{S}) \cong \varphi(S) / \sqrt{\varphi(S) 
\cap \Lambda e \Lambda}$ as algebras.  
We then have the isomorphism:
$$
R^{\widehat{M} \oplus \Omega(\widehat{M})} \cong 
\mathrm{Im}(\Delta_{S}) \cong \varphi(S) / \sqrt{\varphi(S) 
\cap \Lambda e \Lambda} \cong R^{\widehat{M}},  
$$ 
which, by (\ref{Comp073579}), is given by $\varphi^{\widehat{M} \oplus 
\Omega(\widehat{M})}_{\widehat{M}}$. 
With the same argument,
by induction on $i$ we obtain
$$
R^{\big(\bigoplus\limits^{0}_{i = l - l'} \widehat{M}[i]\big)} \cong R^{\widehat{M}} 
$$
given by $\varphi^{\big(\bigoplus\limits^{0}_{i = l - l'} \widehat{M}[i]\big)}_{\widehat{M}}$.  
Since $R^{\big(\bigoplus\limits^{ 0}_{i = l - l'} \widehat{M}[i]\big)} 
\cong R^{\big(\bigoplus\limits_{i = l}^{l'} 
\widehat{M}[i] \big)[l']} = R^{\widehat{M}(l, l')}$, we get
$
R^{\widehat{M}(l, l')} \cong R^{\widehat{M}}
$ given by $\varphi^{\widehat{M}(l, l')}_{\widehat{M}}$. 
\end{proof}

Now, let $N$ be a Cohen-Macaulay $S$-module. 
From \S\ref{Slgd}, we know that 
$\widehat{M}$ is a classical generator of $\underline{\mathrm{CM}}(S)$. 
Then we have $N \in \big< \widehat{M} \big>_{m}$ for some $m \in \mathbb{N}$. 
It suggests that there is a sequence of distinguished triangles
$$
\Big\{ M^{r-1}_N \rightarrow M^r_N \rightarrow W^{r-1}_N 
\rightarrow M^{r-1}_N [1] \Big\}_{m \geq r \geq 2}
$$
in $\underline{\mathrm{CM}}(S)$,
where $M^m_N := N$, $M^{r-1}_N  
\in \big< \widehat{M} \big>_{r-1}$ and 
$W^{r-1}_N  \in \big< \widehat{M} \big>_{1}$.
Here, for the triangulated category $\mathcal{T}$ and a class of objects 
$\mathcal{E}$ in $\mathcal{T}$,  
recall that $\langle\mathcal{E}\rangle_1$ is 
the smallest full subcategory of 
$\mathcal{T}$ containing the objects in $\mathcal{E}$ and 
closed under direct summands, finite direct sums and shifts.

Since $M^1_{N} \in \big<\widehat{M}\big>_1$ and $W^{r-1}_N \in \big<\widehat{M}\big>_1$ for any $2 \leq r \leq m$, 
$\bigoplus_{r=2}^{m}W^{r-1}_N \oplus M^1_{N} $ 
is a direct summand of some direct sums of 
$\bigoplus_{i=v}^{v'} \widehat{M}[i]$ for some $v', v \in \mathbb{Z}$. 
Let $l', l \in \mathbb{Z}$ such that $l' \geq v' \geq v \geq l$ and $l' > 0 > l$.
We have the following.

\begin{lemma}\label{Conne}
For any nontrivial $f \in \mathrm{Hom}_{\underline{\mathrm{CM}}(S)}(N, N)$ 
which is not nilpotent, 
there is an element 
$g \in \mathrm{Hom}_{\underline{\mathrm{CM}}(S)}(N, \widehat{M}[i])$   
for some $i \in [l, l']$ such 
that $g \circ f^j$ is nontrivial for any 
$j \in \mathbb{N}$, where $f^j$ is the 
$i$-fold self-composition of $f$. 
\end{lemma}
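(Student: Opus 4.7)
The plan is to argue the contrapositive: assume that for every $g \in \mathrm{Hom}_{\underline{\mathrm{CM}}(S)}(N, \widehat{M}[i])$ with $i \in [l, l']$ there exists some $j(g) \in \mathbb{N}$ with $g \circ f^{j(g)} = 0$, and deduce that $f$ must be nilpotent, contradicting the hypothesis.

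First I would exploit Noetherianity. Since $\mathrm{Hom}_{\underline{\mathrm{CM}}(S)}(N, \widehat{M}[i])$ is a quotient of the finitely generated $S$-module $\mathrm{Hom}_S(N, \widehat{M}[i])$, it is itself finitely generated over $S$; precomposition by $f$ is $S$-linear, so the ascending chain of $S$-submodules $K_j^{(i)} := \{g : g \circ f^j = 0\}$ stabilizes at some index $j_0(i)$. Because $[l, l']$ is a finite set, $j_0 := \max_{i \in [l, l']} j_0(i)$ exists. The contrapositive assumption forces $\bigcup_j K_j^{(i)} = \mathrm{Hom}_{\underline{\mathrm{CM}}(S)}(N, \widehat{M}[i])$ for each such $i$, and combined with stabilization this upgrades to the uniform statement $g \circ f^{j_0} = 0$ for every $g: N \to \widehat{M}[i]$ with $i \in [l, l']$.

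Second, I would peel off the filtration $\{M^{r-1}_N \xrightarrow{\alpha^r} M^r_N \xrightarrow{\beta^r} W^{r-1}_N\}_{r=2}^{m}$ of $N$ one layer at a time. Since $W^{m-1}_N$ is a direct summand of some $\bigoplus_i \widehat{M}[i]^{n_i}$ with $i \in [l, l']$, composing $\beta^m \circ f^{j_0}$ with the corresponding section produces a tuple of morphisms $N \to \widehat{M}[i]$, each killed by $f^{j_0}$ by the uniform statement; hence $\beta^m \circ f^{j_0} = 0$ and $f^{j_0} = \alpha^m \circ \widetilde{f}_1$ for some $\widetilde{f}_1: N \to M^{m-1}_N$. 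Iterating downward, at stage $r$ one obtains $f^{r j_0} = \alpha^m \alpha^{m-1} \cdots \alpha^{m-r+1} \widetilde{f}_r$ with $\widetilde{f}_r: N \to M^{m-r}_N$; applying the same argument to $\beta^{m-r} \widetilde{f}_r f^{j_0}$ (using the section of $W^{m-r-1}_N$) yields $\widetilde{f}_{r+1}: N \to M^{m-r-1}_N$ with $\widetilde{f}_r f^{j_0} = \alpha^{m-r} \widetilde{f}_{r+1}$. After $m-1$ iterations one reaches $\widetilde{f}_{m-1}: N \to M^1_N$, and since $M^1_N$ itself is a summand of some $\bigoplus_i \widehat{M}[i]^{n_i}$ with $i \in [l, l']$, the base-case argument forces $\widetilde{f}_{m-1} f^{j_0} = 0$. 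Chaining the identities then gives $f^{m j_0} = \alpha^m \alpha^{m-1} \cdots \alpha^2 \widetilde{f}_{m-1} f^{j_0} = 0$, contradicting the assumption that $f$ is not nilpotent.

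The main subtlety is the bookkeeping across the $m-1$ peeling steps and verifying that every auxiliary morphism $N \to \widehat{M}[i]$ introduced in the argument has $i \in [l, l']$; this is exactly the role of the choice $l' \geq v' \geq v \geq l$ in the setup, which places all the shifts appearing among the summands of the $W^{r-1}_N$ and $M^1_N$ within the range where the uniform statement of the first step applies.
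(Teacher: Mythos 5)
Your proof is correct and follows essentially the same strategy as the paper's: argue by contradiction, then peel the filtration $\{M^{r-1}_N\to M^r_N\to W^{r-1}_N\}$ layer by layer via the long exact sequences obtained from $\mathrm{Hom}_{\underline{\mathrm{CM}}(S)}(N,-)$, killing the $W^{r-1}_N$ (and $M^1_N$) pieces, which are summands of $\bigoplus_{i=l}^{l'}\widehat{M}[i]$, and lifting along the connecting maps until a power of $f$ is forced to vanish. The only genuine difference is your preliminary Noetherianity step producing a uniform exponent $j_0$; the paper instead carries a per-morphism exponent and simply adds exponents across the finitely many stages of the induction, so your extra step is a harmless but unnecessary refinement rather than a change of approach.
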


\begin{proof}
We prove by contradiction.
Assume that there is an element 
$f \in \mathrm{Hom}_{\underline{\mathrm{CM}}(S)}(N, N)$, 
which is not nilpotent, such that 
for any $i \in [l, l']$ and any $g \in \mathrm{Hom}_{\underline{\mathrm{CM}}(S)}(N, \widehat{M}[i])$, 
$g \circ f^{j} = 0$ for some $j \in \mathbb{N}$. 
It implies that for any $g \in \mathrm{Hom}_{\underline{\mathrm{CM}}(S)}\big(N, \bigoplus_{i=l}^{l'} \widehat{M}[i] \big)$, 
$g \circ f^{j} = 0$ for some $j \in \mathbb{N}$. 

We first consider the distinguished triangle $M^{1}_N 
\rightarrow M^2_N \rightarrow W^{1}_N \rightarrow M^{1}_N [1]$ in $\underline{\mathrm{CM}}(S)$. 
By applying the triangle functor $\mathrm{Hom}_{\underline{\mathrm{CM}}(S)}(N, -)$ 
to this distinguished triangle, 
we have the following long exact sequence 
\begin{align*}
\cdots \rightarrow & \mathrm{Hom}_{\underline{\mathrm{CM}}(S)}(N, M^{1}_N ) 
\rightarrow \mathrm{Hom}_{\underline{\mathrm{CM}}(S)}(N, M^2_N) \rightarrow \\ 
& \mathrm{Hom}_{\underline{\mathrm{CM}}(S)}(N, W^{1}_N ) \rightarrow 
\mathrm{Hom}_{\underline{\mathrm{CM}}(S)}(N, M^{1}_N [1]) \rightarrow \cdots.
\end{align*}
Denote by $\varrho$ and $\varepsilon$ the above morphisms
$\mathrm{Hom}_{\underline{\mathrm{CM}}(S)}(N, M^{1}_N) \rightarrow 
\mathrm{Hom}_{\underline{\mathrm{CM}}(S)}(N, M^2_N)$ and 
$\mathrm{Hom}_{\underline{\mathrm{CM}}(S)}(N, M^2_N) \rightarrow 
\mathrm{Hom}_{\underline{\mathrm{CM}}(S)}(N, W^{1}_N)$ respectively. 
Let $h \in \mathrm{Hom}_{\underline{\mathrm{CM}}(S)}(N, M^2_N)$. 
By assumption, we have 
$\varepsilon (h) \circ f^j = 0$ for some $j \in \mathbb{N}$, since $W^{1}_N$ is a direct summand of 
$\bigoplus_{i=l}^{l'} \widehat{M}[i]$. 
It implies that 
$$
\varepsilon (h \circ f^j) = 0
$$
since $\varepsilon$ is a $\mathrm{Hom}_{\underline{\mathrm{CM}}(S)}(N, N)$-module homomorphism.  
Then, from the above long exact sequence, there is an element, say 
$h' \in \mathrm{Hom}_{\underline{\mathrm{CM}}(S)}(N, M^{1}_N)$, such that 
$\varrho(h') = h \circ f^j$. 
By our assumption again, we also have 
$h' \circ f^{j'} = 0$ for some $j' \in \mathbb{N}$ since $M^{1}_N$ is a direct summand of 
$\bigoplus_{i=l}^{l'} \widehat{M}[i]$.  
In the meantime, we know that $\varrho$ is also a 
$\mathrm{Hom}_{\underline{\mathrm{CM}}(S)}(N, N)$-module 
homomorphism. Then we get that  
$$
 h \circ f^{j + j'}= h \circ f^j \circ f^{j'} = \varrho(h') \circ f^{j'} = \varrho(h' \circ f^{j'}) = 0. 
$$

Next, consider all distinguished triangles in
$$
\Big\{ M^{r-1}_N \rightarrow M^r_N \rightarrow W^{r-1}_N 
\rightarrow M^{r-1}_N [1] \Big\}_{m \geq r \geq 2}
$$
in $\underline{\mathrm{CM}}(S)$. 
By induction on $r$, we get that for any $M^r_N$, 
and any $\alpha \in  \mathrm{Hom}_{\underline{\mathrm{CM}}(S)}(N, M^r_N)$, 
$ \alpha \circ f^{j_r} = 0$ for some $j_r \in \mathbb{N}$. 
If $r = m$, let $\alpha$ be the identity 
morphism in $\mathrm{Hom}_{\underline{\mathrm{CM}}(S)}(N, N)$ since $N = M^m_N$. 
Then we obtain that 
$$
f^{j_m} = \alpha \circ f^{j_m} = 0.
$$ 
It implies that $f$ is a nilpotent element,
which contradicts 
to the assumption that $f$ is not a nilpotent element. 
\end{proof}

\begin{proof}[Proof of Proposition \ref{Cl22}]
The diagram is naturally commutative. 
Thus, to prove this proposition, we only need to show that 
the algebra homomorphisms in this diagram are all isomorphisms. 

Fix the Cohen-Macaulay $S$-module $N$ and keep the above settings for $l, l'$
(see the paragraph above Lemma \ref{Conne}). 
First, similarly to (\ref{Inclu889}), it obvious that 
$$
Z\Big( \mathrm{End}_{\underline{\mathrm{CM}}(S)}\big(\widehat{M}(l, l') \oplus N \big) \Big) 
\subseteq Z\Big( \mathrm{End}_{\underline{\mathrm{CM}}(S)}\big( \widehat{M}(l, l') \big) \Big) 
\oplus Z\Big( \mathrm{End}_{\underline{\mathrm{CM}}(S)}( N ) \Big) 
$$
as algebras. 
Moreover, when considering 
the corresponding reduced rings, by Lemma \ref{Conne2}
we have  
\begin{align}\label{Incl90624}
R^{\widehat{M}(l, l') \oplus N} \subseteq R^{\widehat{M}(l, l')} \oplus R^{N} \cong 
R^{\widehat{M}} \oplus R^{N}, 
\end{align}
which is exactly $\varphi_{\widehat{M}(l, l')}^{\widehat{M}(l, l') 
\oplus N} \bigoplus \varphi_{N}^{\widehat{M}(l, l') \oplus N}$, 
where $R^{\widehat{M}(l, l')} \cong R^{\widehat{M}}$ is given 
by $\varphi_{\widehat{M}}^{\widehat{M}(l, l')}$. We identify $R^{\widehat{M}(l, l') \oplus N}$ 
with the image of the above inclusion (\ref{Incl90624}).

Let $(\bar{g}_1, \bar{g}_2 ) \in R^{\widehat{M}} \oplus R^{N}$ be in 
$R^{\widehat{M}(l, l') \oplus N}$. 
By Theorem \ref{Theo22}, 
we have an isomphism $\varphi_{\widehat{M}}:
\varphi(S) \big/\sqrt{\varphi(S)\cap \Lambda e\Lambda}\cong R^{\widehat{M}}$. Let 
$\varphi_{\widehat{M}}^{-1}$ be its inverse morphism. 
The composition of
$$ 
\varphi(S) \big/\sqrt{\varphi(S)\cap \Lambda e\Lambda} 
\xrightarrow{\varphi_{\widehat{M}(l, l') \oplus N}} R^{\widehat{M}(l, l') \oplus N} 
$$
with (\ref{Incl90624}) is
$\varphi_{\widehat{M}(l, l')}^{\widehat{M}(l, l') \oplus N} \bigoplus 
\varphi_{N}^{\widehat{M}(l, l') \oplus N}$, and 
the image of $\varphi_{\widehat{M}}^{-1}
(\bar{g}_{1})$ under this composition is exactly 
$\Big(\bar{g}_1,  \varphi_{N}\big(\varphi_{\widehat{M}}^{-1}(\bar{g}_{1})\big) \Big)$. 
Here, we use the fact that the composition $ R^{\widehat{M}(l, l') \oplus N} 
\subseteq R^{\widehat{M}(l, l')} \oplus R^{N} 
\twoheadrightarrow R^{\widehat{M}(l, l')} \cong R^{\widehat{M}}$ is exactly 
$\varphi^{\widehat{M}(l, l') \oplus N}_{\widehat{M}}$, 
and the composition $ R^{\widehat{M}(l, l') \oplus N} \subseteq 
R^{\widehat{M}(l, l')} \oplus R^{N} 
\twoheadrightarrow R^{N}$ is exactly 
$\varphi^{\widehat{M}(l, l') \oplus N}_{N}$.  
Thus we obtain that 
$\Big(\bar{g}_1,  \varphi_{N}\big(\varphi_{\widehat{M}}^{-1}(\bar{g}_{1})\big) \Big) 
\in R^{\widehat{M}} \oplus R^{N}$ 
is also in $R^{\widehat{M}(l, l') \oplus N}$. 
Therefore,
$$
\big(\bar{g}_1, \bar{g}_2  \big) 
 - \Big(\bar{g}_1,  \varphi_{N}\big(\varphi_{\widehat{M}}^{-1}(\bar{g}_{1})\big) \Big) 
 = \Big(0, \bar{g}_2 - \varphi_{N}\big(\varphi_{\widehat{M}}^{-1}(\bar{g}_{1})\big) \Big) 
\in R^{\widehat{M}} \oplus R^{N}
$$ 
is also in 
$R^{\widehat{M}(l, l') \oplus N}$. 

Let $(\varepsilon, f_{1,2}) \in 
Z\Big( \mathrm{End}_{\underline{\mathrm{CM}}(S)}
\big(\widehat{M}(l, l') \oplus N \big) \Big)$ whose image 
is $\Big(0, \bar{g}_2 - \varphi_{N}\big(\varphi_{\widehat{M}}^{-1}(\bar{g}_{1})\big) \Big)$ 
under the quotient 
by the nilradical 
\begin{align}\label{Nilp885390}
Z\Big( \mathrm{End}_{\underline{\mathrm{CM}}(S)}\big(\widehat{M}(l, l') \oplus N \big) \Big) 
\twoheadrightarrow R^{\widehat{M}(l, l') \oplus N}. 
\end{align}
Here, $\varepsilon \in Z\Big( \mathrm{End}_{\underline{\mathrm{CM}}(S)}
\big(\widehat{M}(l, l') \big) \Big)$ and $f_{1,2} \in Z\big( 
\mathrm{End}_{\underline{\mathrm{CM}}(S)}( N ) \big)$. 
From the following commutative diagram 
\begin{align*}
\xymatrixcolsep{4pc}
\xymatrix{  
Z\Big( \mathrm{End}_{\underline{\mathrm{CM}}(S)}
\big(\widehat{M}(l, l') \oplus N \big) \Big) 
\ar@{->>}[d] \ar@{->>}[r]^-{\widetilde{\phi}^{\widehat{M}(l, l') \oplus N}_{\widehat{M}(l, l')}} & 
 Z\Big( \mathrm{End}_{\underline{\mathrm{CM}}(S)}\big(\widehat{M}(l, l') \big) \Big)
  \ar@{->>}[d] \\ 
R^{\widehat{M}(l, l') \oplus N} \ar@{->>}[r]^-{\varphi^{\widehat{M}(l, l') 
\oplus N}_{\widehat{M}(l, l')}} & R^{\widehat{M}(l, l')},
}	
\end{align*}
where the vertical morphisms
are projections to the quotients by 
the nilradicals respectively,  
the morphism $Z\Big( \mathrm{End}_{\underline{\mathrm{CM}}(S)}
\big(\widehat{M}(l, l') \big) \Big) \twoheadrightarrow R^{\widehat{M}(l, l')}$ 
takes $\varepsilon$ to $0 \in R^{\widehat{M}(l, l')}$, since the image of $(\varepsilon, f_{1,2})$ 
is $\Big(0, \bar{g}_2 - \varphi_{N}\big(\varphi_{\widehat{M}}^{-1}(\bar{g}_{1})\big) \Big)$ 
under morphism (\ref{Nilp885390}). 
Here, note that 
$\widetilde{\phi}^{\widehat{M}(l, l') \oplus N}_{\widehat{M}(l, l')}(\varepsilon, f_{1,2}) 
= \varepsilon$ and 
$\varphi^{\widehat{M}(l, l') \oplus N}_{\widehat{M}(l, l')}
\Big(0, \bar{g}_2 - \varphi_{N}\big(\varphi_{\widehat{M}}^{-1}(\bar{g}_{1})\big) \Big) = 0$.
Then $\varepsilon$ is a nilpotent element in 
$Z\Big( \mathrm{End}_{\underline{\mathrm{CM}}(S)}\big(\widehat{M}(l, l') \big) \Big)$; that is,
$\varepsilon^{n} = 0$ for some $n \in \mathbb{N}$. 
Thus $$ (\varepsilon, f_{1,2})^{n} = (0, f_{1, 2}^{n}) \in 
Z\Big( \mathrm{End}_{\underline{\mathrm{CM}}(S)}\big(\widehat{M}(l, l') 
\oplus N \big) \Big).$$
Note that 
$f_{1, 2}^{n} \in \mathrm{Hom}_{\underline{\mathrm{CM}}(S)}(N, N)$. 
Therefore, for any $g \in \mathrm{Hom}_{\underline{\mathrm{CM}}(S)}(N, \widehat{M}(l, l'))$, 
we have
that $g \circ f^{n}_{1, 2} = g \circ 0 = 0$. 
It implies that for any $i \in [l, l']$ and
for any $g' \in \mathrm{Hom}_{\underline{\mathrm{CM}}(S)}(N, \widehat{M}[i])$, 
$g' \circ f^{n}_{1, 2}$is trivial 
since $\widehat{M}[i]$ is a direct summand of $\widehat{M}(l, l')$.

By Lemma \ref{Conne}, we have that $f_{1, 2}$ is nilpotent, i.e., $f_{1, 2}^{n'} = 0$ 
for some $n'\in \mathbb{N}$. 
Then we get that $(\varepsilon, f_{1,2})^{n\cdot n'} = 0$. 
It suggests that $\Big(0, \bar{g}_2 - 
\varphi_{N}\big(\varphi_{\widehat{M}}^{-1}(\bar{g}_{1})\big) \Big) = 0$ 
in $R^{\widehat{M}(l, l') \oplus N}$. Since
$R^{\widehat{M}(l, l') \oplus N} \subseteq 
R^{\widehat{M}(l, l')} \oplus R^{N}$, 
$\Big(0, \bar{g}_2 - \varphi_{N}\big(\varphi_{\widehat{M}}^{-1}(\bar{g}_{1})\big) \Big) = 0$ 
in 
$R^{\widehat{M}(l, l')} \oplus R^{N}$. 
It follows that 
$$
\bar{g}_2 - \varphi_{N}\big(\varphi_{\widehat{M}}^{-1}(\bar{g}_{1})\big) = 0
$$
in $R^N$ since $(\varepsilon, f_{1,2})$ is a nilpotent. 
Then $(\bar{g}_1, \bar{g}_{2}) = (\bar{g}_1, 
\varphi_{N}\big(\varphi_{\widehat{M}}^{-1}(\bar{g}_{1})\big))$. 
Thus $(\bar{g}_1, \bar{g}_{2})$ is the image of $\varphi_{\widehat{M}}^{-1}(\bar{g}_{1})$ under
$$ 
\varphi(S) \big/\sqrt{\varphi(S)\cap \Lambda e\Lambda} 
\xrightarrow{\varphi_{\widehat{M}(l, l') \oplus N}} R^{\widehat{M}(l, l') 
\oplus N} \subseteq R^{\widehat{M}(l, l')} \oplus R^{N} \cong R^{\widehat{M}} \oplus R^{N}. 
$$
By the arbitrariness of $(\bar{g}_1, \bar{g}_{2})$, 
we get that $\varphi_{\widehat{M}(l, l') \oplus N}$ is a surjection. 

On the other hand, from
$$
\varphi_{\widehat{M}}^{\widehat{M}(l, l') \oplus N} \circ \varphi_{\widehat{M}(l, l') \oplus N} 
= \varphi_{\widehat{M}}, 
$$
we get $\varphi_{\widehat{M}(l, l') \oplus N}$ is an injection, 
since by Theorem \ref{Theo22}
$\varphi_{\widehat{M}}$ is an isomorphism of algebras. 

We thus get that 
$\varphi_{\widehat{M}(l, l') \oplus N}$ is an isomorphism of algebras. It implies that 
$\varphi_{\widehat{M}}^{\widehat{M}(l, l') \oplus N}$ 
is also an isomorphism of algebras by the commutativity of this diagram. 
\end{proof}

\end{document}